\newtheorem{theorem}{Theorem}[section]
\newtheorem{proposition}[theorem]{Proposition}
\newtheorem{lemma}[theorem]{Lemma}
\newtheorem{corollary}[theorem]{Corollary}
\theoremstyle{definition}
\newtheorem{example}[theorem]{Example}
\theoremstyle{remark}
\theoremstyle{remark}
\newtheorem{remark}[theorem]{Remark}
\def\({{\rm (}}
\def\){{\rm )}}
\let\Mathrm\operator@font
\let\Cal\mathcal
\let\Bbb\mathbb
\newcommand{\fm}{\ensuremath{\mathfrak m}}
\def\standop#1{\mathop{\Mathrm #1}\nolimits}
\def\difstop#1#2{\expandafter\def\csname #1\endcsname{\standop{#2}}}
\def\defstop#1{\difstop{#1}{#1}}
\def\dq{/\!\!/}
\def\GL{\text{\sl{GL}}}
\def\id{\mathord{\Mathrm{id}}}
\def\Orth{\text{\sl{O}}}
\def\SL{\text{\sl{SL}}}
\def\SO{\text{\sl{SO}}}
\def\Sp{\text{\sl{Sp}}}
\def\St{St}
\def\ua{\underline{a}}
\def\O{\Cal O}
\def\fG{\mathfrak{G}}
\def\fS{\mathfrak{S}}
\def\fT{\mathfrak{T}}
\def\fm{\mathfrak{m}}
\def\uSpec{\mathop{\text{\underline{$\Mathrm Spec$}}}\nolimits}
\def\sdarrow#1{\downarrow\hbox to 0pt{\scriptsize$#1$\hss}}
\def\suarrow#1{\uparrow\hbox to 0pt{\scriptsize$#1$\hss}}
\def\ssearrow#1{\searrow\hbox to 0pt{\scriptsize$#1$\hss}}
\def\ext{{\textstyle\bigwedge}}
\def\section{\@startsection{section}{1}{\z@ }%
{-3.5ex plus -1ex minus -.2ex}{2.3ex plus .2ex}{\bf }}
\long\def\refname{\par\kern -3ex
\begin{center}\rm R\sc{eferences}\end{center}\par\kern 
-2ex}
\def\@seccntformat#1{\csname the#1\endcsname.\quad}
\def\@@@sect#1#2#3#4#5#6[#7]#8{%
   \ifnum #2>\c@secnumdepth 
      \def \@svsec {}\else \refstepcounter {#1}%
      \def\@svsec{}
   \fi 
   \@tempskipa #5\relax 
   \ifdim \@tempskipa >\z@ 
     \begingroup #6\relax \@hangfrom {\hskip #3\relax 
     \@svsec}{\interlinepenalty \@M #8\par }\endgroup 
     \csname #1mark\endcsname {#7}
   \else 
   \def \@svsechd {#6\hskip #3\@svsec #8\csname #1mark\endcsname {#7}}
   \fi \@xsect {#5}}
\def\@@@startsection#1#2#3#4#5#6{%
 \if@noskipsec \leavevmode \fi \par \@tempskipa #4\relax \@afterindenttrue 
 \ifdim \@tempskipa <\z@ \@tempskipa -\@tempskipa \@afterindentfalse 
 \fi \if@nobreak \everypar {}\else \addpenalty {\@secpenalty }\addvspace 
  {\@tempskipa }\fi \@ifstar {\@ssect {#3}{#4}{#5}{#6}}{\@dblarg 
  {\@@@sect {#1}{#2}{#3}{#4}{#5}{#6}}}}
\def\theparagraph{\thesection.\arabic{paragraph}}
\def\aparagraph{\@@@startsection{paragraph}{2}{\z@ }%
              {1.75ex plus .2ex minus .15ex}{-1em}{\bf(\theparagraph) } }
\def\paragraph{\@@@startsection{paragraph}{2}{\z@ }%
              {1.75ex plus .2ex minus .15ex}{-1em}{}{\bf(\theparagraph)} }
\let\c@theorem\c@paragraph
\title{Good filtrations and strong $F$-regularity of the ring of 
$U_P$-invariants}
\author{M{\sc itsuyasu} H{\sc ashimoto}}
\date{\normalsize
Graduate School of Mathematics, Nagoya University\\
Chikusa-ku,  Nagoya 464--8602 JAPAN\\
{\small \tt hasimoto@math.nagoya-u.ac.jp}}
\begin{document}

\maketitle
\footnote[0]
    {2010 \textit{Mathematics Subject Classification}. 
    Primary 13A50; Secondary 13A35.
    Key Words and Phrases.
    good filtration, $F$-regular, invariant subring.
}

\begin{abstract}
Let $k$ be an algebraically closed field of positive characteristic, 
$G$ a reductive group over $k$, and $V$ a finite dimensional $G$-module.
Let $P$ be a parabolic subgroup of $G$, and $U_P$ its unipotent radical.
We prove that if $S=\Sym V$ has a good filtration, then $S^{U_P}$ is 
strongly $F$-regular.
\end{abstract}

\section{Introduction}

Throughout this paper,
$k$ denotes an algebraically closed field,
and $G$ a reductive group over $k$.
We fix a maximal torus $T$ and a Borel subgroup $B$ which contains $T$.
We fix a base $\Delta$ 
of the root system $\Sigma$ of $G$ so that $B$ is negative.
For any weight $\lambda\in X(T)$, we denote the induced module
$\ind_B^G(\lambda)$ by $\nabla_G(\lambda)$.
We denote the set of dominant weights by $X^+$.
For $\lambda\in X^+$, we call $\nabla_G(\lambda)$ the {\em dual Weyl module}
of highest weight $\lambda$.
Note that for $\lambda\in X(T)$, $\ind_B^G(\lambda)\neq 0$ if and only
if $\lambda\in X^+$ \cite[(II.2.6)]{Jantzen}, 
and if this is the case, $\nabla_G(\lambda)=
\ind_B^G(\lambda)$ is finite dimensional \cite[(II.2.1)]{Jantzen}.
We denote $\nabla_G(-w_0\lambda)^*$ by $\Delta_G(\lambda)$, and call it
the {\em Weyl module} of highest weight $\lambda$, where $w_0$ is the
longest element of the Weyl group of $G$.

We say that a $G$-module $W$ is {\em good}
if $\Ext^1_G(\Delta_G(\lambda),W)=0$ for any $\lambda\in X^+$.
A filtration 
$0=W_0\subset W_1\subset W_2\subset\cdots\subset W_r$
or $0=W_0\subset W_1\subset W_2\subset\cdots$ of $W$
is called a {\em good filtration} of $W$ if
$\bigcup_i W_i=W$, and for any $i\geq 1$, 
$W_i/W_{i-1}\cong \nabla_G(\lambda(i))$ for some $\lambda(i)\in X^+$.
A $G$-module $W$ has a good filtration if and only if $W$ is good and
of countable dimension \cite{Donkin2}.
See also \cite{Friedlander} and \cite[(III.1.3.2)]{Hashimoto5}.

Let $V$ be a finite dimensional $G$-module.
Let $P$ be a parabolic subgroup of $G$ containing $B$, and 
$U_P$ its unipotent radical.
The objective of this paper is to prove the following.

\begin{trivlist}\item[\bf Corollary~\ref{w-U_P.thm}]\it
Let $k$ be of positive characteristic.
Let $V$ be a finite dimensional $G$-module, and assume that
$S=\Sym V$ is good as a $G$-module.
Then $S^{U_P}$ is a finitely generated strongly $F$-regular Gorenstein UFD.
\end{trivlist}

An $F$-finite Noetherian ring $R$ of characteristic $p$ is said to be 
strongly $F$-regular if for any nonzerodivisor $a$ of $R$, there exists some
$r>0$ such that the
$R^{(r)}$-linear map $aF^r: R^{(r)}\rightarrow R$ ($x^{(r)}
\mapsto ax^{p^r}$) is
$R^{(r)}$-split \cite{HH}.
See (\ref{Frob-1.par}) for the notation.
A strongly $F$-regular $F$-finite ring is $F$-regular in the sense of 
Hochster--Huneke \cite{HH2}, and hence it is Cohen--Macaulay normal
(\cite[(4.2)]{HH3}, \cite{Kunz}, and \cite[(0.10)]{Velez}).

Under the same assumption as in Corollary~\ref{w-U_P.thm}, 
it has been known that 
$S^G$ is strongly $F$-regular \cite{Hashimoto}.
This old result is a corollary to our Corollary~\ref{w-U_P.thm}, since 
$T$ is linearly reductive and 
$S^G=S^B=(S^U)^T$ 
is a direct summand subring of $S^U$.
Under the same assumption as in Corollary~\ref{w-U_P.thm},
it has been proved that $S^U$ is $F$-pure \cite{Hashimoto2}.
An $F$-finite Noetherian ring $R$ of characteristic $p$ is said to be
$F$-pure if the Frobenius map $F:R^{(1)}\rightarrow R$ splits as 
an $R^{(1)}$-linear map.
Almost by definition, an $F$-finite 
strongly $F$-regular ring is $F$-pure, and hence
Corollary~\ref{w-U_P.thm}
(or Corollary~\ref{w-main.thm}) yields this old result, too.

Popov \cite{Popov2} proved that if the characteristic of $k$ is zero, 
$G$ is a reductive group over $k$, and
$A$ is a finitely generated $G$-algebra, then $A$ has rational singularities
if and only if $A^U$ does so.
Corollary~\ref{w-U_P.thm} (or Corollary~\ref{w-main.thm}) 
can be seen as a weak characteristic $p$ version of one direction of 
this result.
For a characteristic $p$ result related to 
the other direction, see Corollary~\ref{U-F-rational.thm}.

Section~2 is preliminaries.
We review the Frobenius twisting of rings, modules, and representations.
We also review the basics of $F$-singularities such as $F$-rationality
and $F$-regularity.

In Section~3, we study the ring theoretic properties of the invariant 
subring $k[G]^U$ of the coordinate ring $k[G]$.
The main results of this section are
Lemma~\ref{k[G]^U.thm} and Corollary~\ref{U-F-rational.thm}.

In Section~4, we state and prove our main result for $P=B$.
In order to do so, we introduce the notion of $G$-strong $F$-regularity
and $G$-$F$-purity.
These notions have already appeared in \cite{Hashimoto} essentially.
Our main theorem in the most general form 
can be stated using these words (Theorem~\ref{main.thm}).
As in \cite{Hashimoto}, Steinberg modules play important roles.

In Section~5, we generalize the main results in Section~4 to the case of
general $P$.
Donkin's results on $U_P$-invariants of good $G$-modules play an important
role here.

In Section~6, we give some examples.
The first one is the action associated with a finite quiver.
The second one is a special case of the first, and is a determinantal 
variety studied by De Concini and Procesi \cite{DP}.
The third one is also an example of the first.
It gives some new understandings on the study of
Goto--Hayasaka--Kurano--Nakamura \cite{GHKN}.
It also has some relationships with Miyazaki's study \cite{Miyazaki}.

In Section~7, we prove the following.

\begin{trivlist}
\item[\bf Theorem~\ref{good-open.thm}]\it
Let $S$ be a scheme, $G$ a reductive $S$-group acting trivially on
a Noetherian $S$-scheme $X$.
Let $M$ be a locally free coherent $(G,\O_X)$-module.
Then
\begin{multline*}
\Good(\Sym M)=
\{x\in X\mid \Sym (\kappa(x)\otimes_{\Cal O_{X,x}} M_x) \\
\text{ is a good $(\Spec \kappa(x)\times_S G)$-module}\},
\end{multline*}
and $\Good(\Sym M)$ is Zariski open in $X$.
\end{trivlist}

For a reductive group $G$ over a field 
which is not linearly reductive, there is
a finite dimensional $G$-module $V$ such that $(\Sym V)^G$ is not
Cohen--Macaulay \cite{Kemper}.
On the other hand, in characteristic zero, a reductive group $G$ is
linearly reductive, and Hochster and Roberts \cite{HR} proved that
$(\Sym V)^G$ is Cohen--Macaulay for any finite dimensional $G$-module $V$.
Later, Boutot proved that $(\Sym V)^G$ has rational singularities
\cite{Boutot}.
In view of Corollary~\ref{w-U_P.thm} and Theorem~\ref{good-open.thm},
it seems that the condition $\Sym V$ being good is an appropriate condition
to ensure that the good results in characteristic zero still holds.

\medskip
Acknowledgement: The author is grateful to Professor S. Donkin, 
Professor S.~Goto,
and
Professor V.~L.~Popov
for valuable advice.
Special thanks are due to Professor K.-i.~Watanabe for 
warm encouragement.

\section{Preliminaries}\label{prelimilaries}

\paragraph\label{Frob-1.par}
Throughout this paper, $p$ denotes a prime number.
Let $K$ be a perfect field of characteristic $p$.

For a $K$-space $V$ and $e\in\Bbb Z$, we denote the abelian group $V$ with
the new $K$-space structure $\alpha \cdot v= \alpha^{p^{-e}}v$ by $V^{(e)}$,
where the product of $\alpha^{p^{-e}}$ and $v$ in the right hand side is
given by the original $K$-space structure of $V$.
An element of $V$, viewed as an element of $V^{(e)}$ is sometimes denoted
by $v^{(e)}$ to avoid confusion.
Thus we have $v^{(e)}+w^{(e)}=(v+w)^{(e)}$ and 
$\alpha v^{(e)}=(\alpha^{p^{-e}}v)^{(e)}$.
If $f: V\rightarrow W$ is a $K$-linear map, then $f^{(e)}:V^{(e)}\rightarrow
W^{(e)}$ given by $f^{(e)}(v^{(e)})=w^{(e)}$ is a $K$-linear map again.
Note that $(?)^{(e)}$ is an autoequivalence of the category of $K$-vector
spaces.

If $A$ is a $K$-algebra, then $A^{(e)}$ with the 
multiplicative structure of $A$
is a $K$-algebra.
So $a^{(e)}b^{(e)}=(ab)^{(e)}$ for $a,b\in A$.
If $M$ is an $A$-module, then $M^{(e)}$ is an $A^{(e)}$-module
by $a^{(e)}m^{(e)}=(am)^{(e)}$.
For a $K$-algebra $A$ and $r\geq0$, the $r$th Frobenius map $F^r=
F^r_A:A\rightarrow A$ is defined by $F^r(a)=a^{p^r}$.
Then $F^r:A^{(r+e)}\rightarrow A^{(e)}$ is a $K$-algebra map for $e\in\Bbb Z$.
Note that $F^r(a^{(r+e)})=(a^{p^r})^{(e)}$.
$F^r:A^{(r+e)}\rightarrow A^{(e)}$ is also written as $(F^r)^{(e)}$.

In commutative ring theory, $A^{(e)}$ is sometimes denoted by
${}^{-e}A$ or $A^{p^e}$.

\paragraph\label{Frob-2.par}
For a $K$-scheme $X$, the scheme $X$ with the new $K$-scheme structure
$X\xrightarrow{f}\Spec K\xrightarrow{{}^a(F^{-e}_K)}\Spec K$ is denoted
by $X^{(e)}$, where $f$ is the original structure map of $X$ as a 
$K$-scheme.
So for a $K$-algebra $A$, $\Spec A^{(e)}$ is identified with $(\Spec A)^{(e)}$.
The Frobenius map $F^r: X\rightarrow X^{(r)}$ is a $K$-morphism.
Note that $(?)^{(e)}$ is an autoequivalence of the category of $K$-schemes
with the quasi-inverse $(?)^{(-e)}$, and it preserves the product.
So the canonical map $(X\times Y)^{(e)}\rightarrow X^{(e)}\times
Y^{(e)}$ is an isomorphism.
If $G$ is a $K$-group scheme, then with the product $G^{(e)}\times G^{(e)}
\cong (G\times G)^{(e)}\xrightarrow{\mu^{(e)}} G^{(e)}$, $G^{(e)}$ is a
$K$-group scheme, and $F^r: G^{(e)}\rightarrow G^{(e+r)}$ is a homomorphism
of $K$-group schemes.
If $V$ is a $G$-module, then $V^{(e)}$ is a $G^{(e)}$-module in a natural way.
Thus $V^{(r)}$ is a $G$-module again for $r\geq 0$ via $F^r:G\rightarrow 
G^{(r)}$.
If $V$ has a basis $v_1,\ldots,v_n$, $g\in G(K)$, and
$gv_j=\sum_i c_{ij}v_i$, then $gv_j^{(r)}=\sum_i c_{ij}^{p^r}v_i^{(r)}$.
If $A$ is a $G$-algebra, then $A^{(r)}$ is a $G$-algebra again.
If $M$ is a $(G,A)$-module, then $M^{(r)}$ is a $(G,A^{(r)})$-module.
See \cite{Hashimoto}.

\paragraph
Let $A$ be an $\Bbb F_p$-algebra.
We say that $A$ is {\em $F$-finite} if $A$ is a finite $A^{(1)}$-module.
An $F$-finite Noetherian $K$-algebra is excellent \cite{Kunz}.

Let $A$ be Noetherian.
We denote by $A^\circ$ the set $A\setminus\bigcup_{P\in\Min A}P$, 
where $\Min A$ denotes the set of minimal primes of $A$.
Let $M$ be an $A$-module and $N$ its submodule.
We define
\begin{multline*}
\Cl_A(N,M)=N_M^*:=\{x\in M \mid \exists c\in A^\circ\;\exists e_0\geq 1\;
\forall e\geq e_0\\
x\otimes c^{(-e)}\in M/N\otimes_AA^{(-e)} 
\text{ is zero}\},
\end{multline*}
and call it the {\em tight closure} of $N$ in $M$.
Note that $\Cl_A(N,M)$ is an $R$-submodule of $M$ containing $N$
\cite[Section~8]{HH2}.
We say that $N$ is {\em tightly closed} in $M$ if $\Cl_A(N,M)=N$.
For an ideal $I$ of $A$, $\Cl_A(I,A)$ is simply denoted by $I^*$.
If $I^*=I$, then we say that $I$ is tightly closed.

We say that $A$ is {\em very strongly $F$-regular} if 
for any $a\in A^\circ$, there exists some $r\geq 1$ such that the 
$A^{(r)}$-linear map $aF^r_A:A^{(r)}\rightarrow A$ has a splitting.
That is, there is an $A^{(r)}$-linear map $\Phi:A\rightarrow A^{(r)}$
such that $\Phi aF^r=\id_{A^{(r)}}$ \cite{HH3}, \cite{Hashimoto8}.
We say that $A$ is {\em strongly $F$-regular} if $\Cl_A(N,M)=N$ for any
$A$-module $M$ and its submodule $N$ \cite[p.~166]{Hochster2}.
We say that $A$ is {\em weakly $F$-regular} if $I=I^*$ for any ideal $I$ of $A$
\cite{HH2}.
We say that $A$ is {\em $F$-regular} if for any prime ideal $P$ of $A$,
$A_P$ is weakly $F$-regular \cite{HH2}.
We say that $A$ is {\em $F$-rational} if $I=I^*$
for any ideal $I$ generated by
$\height I$ elements, where $\height I$ denotes the height of $I$.

\begin{lemma}\label{F-sing.thm}
Let $A$ be a Noetherian $\Bbb F_p$-algebra.
\begin{description}
\item[(i)] If $A$ is very strongly $F$-regular, then it is strongly 
$F$-regular.
The converse is true, if $A$ is either local, $F$-finite, or 
essentially of finite type over an excellent local ring.
\item[(ii)] If $A$ is strongly $F$-regular, then it is $F$-regular.
An $F$-regular ring is weakly $F$-regular.
A weakly $F$-regular ring is $F$-rational.
\item[(iii)] A pure subring of a strongly $F$-regular ring is strongly
$F$-regular.
\item[(iv)] An $F$-rational ring is normal.
\item[(v)] An $F$-rational ring which is a homomorphic image of a 
Cohen--Macaulay ring is Cohen--Macaulay.
\item[(vi)] A locally excellent $F$-rational ring is Cohen--Macaulay.
\item[(vii)] If $A=\bigoplus_{i\geq 0}A_i$ is graded and $A_0$ is
a field, and if 
$A$ is weakly $F$-regular, then $A$ is very strongly $F$-regular.
\item[(viii)] A Gorenstein $F$-rational ring is strongly $F$-regular.
\end{description}
\end{lemma}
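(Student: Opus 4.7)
The plan is to treat Lemma~\ref{F-sing.thm} as a compendium of standard facts from Hochster--Huneke tight closure theory; each item is either a direct citation or a short verification from the preceding ones. For (i), the direction very strongly $F$-regular $\Rightarrow$ strongly $F$-regular proceeds by rephrasing the tight-closure condition $x\otimes c^{(-r)}=0$ in $(M/N)\otimes_A A^{(-r)}$ as the vanishing of $(\id_{M/N}\otimes\psi)(x\otimes 1)$, where $\psi:A\to A^{(-r)}$ is the $A$-linear map adjoint to $cF^r$; a splitting of $\psi$ (equivalently, of $cF^r$) forces $x\in N$. The converse in the three listed settings is proved in \cite{HH} in the $F$-finite case and in \cite{Hashimoto8} more generally, and I would simply cite these. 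Items (ii) and (iii) are bookkeeping: tight closure commutes with localization, giving strongly $F$-regular $\Rightarrow$ $F$-regular; $I^*\subseteq\bigcap_P IA_P=I$ gives $F$-regular $\Rightarrow$ weakly $F$-regular; weakly $F$-regular $\Rightarrow$ $F$-rational is immediate from the definitions; and (iii) follows because purity of $A\hookrightarrow B$ lets one pull back $\Cl_A(N,M)\otimes_A B\subseteq\Cl_B(N\otimes B,M\otimes B)=N\otimes B$ to the inclusion $\Cl_A(N,M)\subseteq N$.

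Items (iv)--(vi) are the Hochster--Huneke results that $F$-rationality forces normality and, in the presence of a Cohen--Macaulay ambient ring, Cohen--Macaulayness; (vi) reduces to (v) locally because the completion of an excellent local ring is a quotient of a regular local ring and Cohen--Macaulayness descends faithfully flatly along $A\to\hat A$. Item (vii) rests on the graded structure: when $A_0$ is a field, a homogeneous test element $c$ exists, weak $F$-regularity forces the test ideal to be the unit ideal, and a graded splitting argument produces the splitting of $aF^r$ uniformly in $a\in A^\circ$. Item (viii) is the classical observation that in the Gorenstein case the parameter test ideal coincides with the test ideal, so $F$-rationality forces the test ideal to equal $A$, i.e.\ strong $F$-regularity.

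The main obstacle is item (i) in the non-trivial direction, which rests on the substantial tight-closure technology of \cite{HH,Hashimoto8}; items (iv), (v), (vii) also require published theorems, but their proofs are well documented elsewhere. Everything else is either a short manipulation with tight-closure witnesses or a direct appeal to the literature, so the lemma is really a packaging of prior work assembled here for convenience in later sections.
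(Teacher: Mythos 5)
Your overall plan of treating the lemma as a compendium of standard facts is correct, and for items (i), (iii), (iv), (v), (vi) the paper does exactly what you suggest, simply citing \cite{Hashimoto8}, \cite{HH3}, and \cite{Velez}. For (viii) you take a genuinely different route: the paper works locally with the \v Cech description of $H^d_{\fm}(A)$, identifies this module with $E_A(A/\fm)$ using the Gorenstein hypothesis, and shows $(0)^*_{H^d_{\fm}(A)}=0$ directly from $F$-rationality, whereas you argue via the coincidence of the parameter test ideal and the test ideal in the Gorenstein case. Both are standard and both work.

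However, there are two genuine problems. First, in (ii) you write that strong $F$-regularity implies $F$-regularity because ``tight closure commutes with localization.'' That statement is false in general (Brenner--Monsky), and it is not the mechanism here. The correct reasoning is that a localization of a strongly $F$-regular ring (with the paper's module-theoretic definition $\Cl_A(N,M)=N$ for all $M$, $N$) is again strongly $F$-regular, which is a nontrivial fact about this particular definition, not a general localization property of tight closure. Second, and more seriously, your sketch of (vii) does not address the central difficulty: the Lyubeznik--Smith theorem \cite[(4.3)]{LS} that you implicitly invoke requires the base field $A_0$ to be $F$-finite, and the whole content of the paper's proof of (vii) is the reduction of the general case to that one. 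The paper passes to the perfect closure $K$ of $A_0$, shows $B:=K\otimes_{A_0}A$ localized at the irrelevant maximal ideal is weakly $F$-regular via \cite[(6.17)]{HH3}, then invokes the Lyubeznik--Smith argument for $B$, and finally descends to $A$ by the pure subring result (iii) from \cite[(3.17)]{Hashimoto8}; only then does it upgrade from strongly to very strongly $F$-regular using the fact that $A$ is finitely generated over the field $A_0$ and (i). Your appeal to ``a homogeneous test element'' and ``a graded splitting argument'' skips this reduction entirely, so as written the proof of (vii) has a gap precisely where the paper does the real work.
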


\begin{proof} {\bf(i)} is \cite[(3.6), (3.9), (3.35)]{Hashimoto8}.
{\bf(ii)} is \cite[(3.7)]{Hashimoto8}, \cite[(4.15)]{HH2}, and
\cite[(4.2)]{HH3}.
{\bf (iii)} is \cite[(3.17)]{Hashimoto8}.
{\bf (iv)} and {\bf(v)} are \cite[(4.2)]{HH3}.
{\bf (vi)} is \cite[(0.10)]{Velez}.

{\bf (vii)} is \cite[(4.3)]{LS}, if the field $A_0$ is $F$-finite.
We prove the general case.
By \cite[(4.15)]{HH2}, $A_\fm$ is weakly $F$-regular,
where $\fm=\bigoplus_{i>0}A_i$ is the irrelevant ideal.
Let $K$ be the perfect closure (the largest purely inseparable extension) 
of $A_0$, and set $B:=K\otimes_{A_0}A$.
Then $B$ is purely inseparable over $A$.
It is easy to see that $B_\fm:=B\otimes_A A_\fm$ is a local ring whose maximal
ideal is $\fm B_\fm$.
By \cite[(6.17)]{HH3}, $B_\fm$ is weakly $F$-regular.
By the proof of \cite[(4.3)]{LS}, $B_\fm$ and $B$ are strongly $F$-regular.
By \cite[(3.17)]{Hashimoto8}, $A$ is strongly $F$-regular.
As $A$ is finitely generated over the field $A_0$, $A$ is 
very strongly $F$-regular by {\bf (i)}.

{\bf (viii)} Let $A$ be a Gorenstein $F$-rational ring.
By \cite[(4.2)]{HH3}, $A_\fm$ is Gorenstein $F$-rational for any maximal
ideal $\fm$ of $A$.
If $A_\fm$ is strongly $F$-regular for any maximal ideal $\fm$ of $A$, then
$A$ is strongly $F$-regular by \cite[(3.6)]{Hashimoto8}.
Thus we may assume that $(A,\fm)$ is local.
Let $(x_1,\ldots,x_d)$ be a system of parameters of $A$.
Then an element of $H^d_\fm(A)$ as the $d$th cohomology group of the modified
\v Cech complex \cite[(3.5)]{BH} is of the form $a/(x_1\cdots x_d)^t$ for some
$t\geq 0$ and $a\in A$.
This element is zero if and only if $a\in (x_1^t,\ldots,x_d^t)$, by
\cite[(10.3.20)]{BH}.
So this element is in the tight closure $(0)^*_{H^d_\fm(A)}$ of $0$
if and only if
$a\in(x_1^t,\ldots,x_d^t)^*=(x_1^t,\ldots,x_d^t)$, and hence
$(0)^*_{H^d_\fm(A)}=0$.
As $A$ is Gorenstein, $H^d_\fm(A)$ is isomorphic to the injective hull
$E_A(A/\fm)$ of the residue field $A/\fm$.
By \cite[(3.6)]{Hashimoto8}, $A$ is strongly $F$-regular.
\end{proof}

\paragraph Let $K$ be a field of characteristic zero, and $A$ a $K$-algebra
of finite type.
We say that $A$ is of {\em strongly $F$-regular type} if there is a 
finitely generated $\Bbb Z$-subalgebra $R$ of $A$ and a finitely generated
flat $R$-algebra $A_R$ such that $A\cong K\otimes_R A_R$, and
for any maximal ideal $\frak m$ of $R$, $R/\frak m\otimes_R A_R$ is
strongly $F$-regular.
See \cite[(2.5.1)]{Hara}.

\section{The invariant subring $k[G]^U$}\label{cox.sec}

\paragraph
Let the notation be as in the introduction.
Let $\Lambda$ be an abelian group.
We say that $A=\bigoplus_{\lambda\in\Lambda} A_\lambda$ is a 
{\em $\Lambda$-graded $G$-algebra}
if $A$ is both a $G$-algebra and a $\Lambda$-graded $k$-algebra, 
and each $A_\lambda$ is
a $G$-submodule of $A$ for $\lambda\in\Lambda$.
This is the same as to say that $A$ is a $G\times \Spec k\Lambda$-algebra,
where $k\Lambda$ is the group algebra of $\Lambda$ over $k$.
It is a commutative cocommutative Hopf algebra with each $\lambda\in\Lambda$
group-like.

We say that a $\Bbb Z$-graded $k$-algebra $A=\bigoplus_{i\in \Bbb Z}A_i$ is
{\em positively graded} if $A_i=0$ for $i<0$ and $k\cong A_0$.

\paragraph\label{P-and-G.par}
Let the notation be as in the introduction.

We need to review Popov--Grosshans filtration \cite{Popov}, \cite{Grosshans2}.

Let us fix (until the end of this section) a function
$h:X(T)\rightarrow \Bbb Z$ such that (i) $h(X^+)\subset\Bbb N=\{0,1,\ldots\}$;
(ii) $h(\lambda)> h(\mu) $ whenever $\lambda>\mu$;
(iii) $h(\chi)=0$ for $\chi\in X(G)$.
Such a function $h$ exists \cite[Lemma~6]{Grosshans2}.

Let $V$ be a $G$-module.
For a poset ideal $\pi$ of $X^+$, we define $O_\pi(V)$ to be
the sum of all the $G$-submodules $W$ of $V$ such that
$W$ belongs to $\pi$, that is, if $\lambda\in X^+$ and $W_\lambda\neq 0$,
then $\lambda\in\pi$.
$O_\pi(V)$ is the biggest $G$-submodule of $V$ belonging to $\pi$.
We set $\pi(n):=h^{-1}(\{0,1,\ldots,n\})$ for $n\geq 0$ and $\pi(n)=\emptyset$
for $n<0$.
We also define $V\langle n\rangle:=O_{\pi(n)}(V)$.

For a $G$-algebra $A$, $(A\langle n\rangle)$ is a filtration of $A$.
That is, $1\in A\langle 0\rangle\subset A\langle1\rangle\subset\cdots$, 
$\bigcup_n A\langle n\rangle=A$, and 
$A\langle n\rangle\cdot A\langle m\rangle
\subset A\langle n+m\rangle$.
The Rees ring $\Cal R(A)$ of $A$ is the subring
$\bigoplus_n A\langle n\rangle t^n$ of $A[t]$.
Letting $G$ act on $t$ trivially, $A[t]$ is a $G$-algebra, and
$\Cal R(A)$ is a $G$-subalgebra of $A[t]$.
So the associated graded ring $\Cal G(A):=\Cal R(A)/t\Cal R(A)$ is
also a $G$-algebra.

We denote the opposite of $U$ by $U^+$.

\begin{theorem}[Grosshans {\cite[Theorem~16]{Grosshans}}]\label{grosshans.thm}
Let $A$ be a $G$-algebra which is good as a $G$-module.
There is a $G$-algebra isomorphism
$\Phi:\Cal G(A)\rightarrow (A^{U^+}\otimes k[G]^U)^T$,
where $U$ acts right regularly on $k[G]$, $T$ acts right regularly on 
$k[G]^U$ \(because $T$ normalizes $U$\), 
and $G$ acts left regularly on $k[G]^U$
and trivially on $A^{U^+}$.
\qed
\end{theorem}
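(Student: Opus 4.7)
The plan is to construct $\Phi$ degree by degree via the canonical decomposition available for good $G$-modules, and then promote the resulting $G$-module isomorphism to an algebra isomorphism through a naturality argument. I rely on two standard facts from the theory of good filtrations: Donkin's description $k[G]^U=\bigoplus_{\lambda\in X^+}\nabla_G(\lambda)$ as a $(G,T)$-module (with right $T$-action reading off the weight), and the resulting canonical $G$-isomorphism $W\cong(W^{U^+}\otimes k[G]^U)^T$, induced by the coaction, valid for any good $G$-module $W$.

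First I would verify that each $A\langle n\rangle=O_{\pi(n)}(A)$ is itself a good $G$-submodule; since $\pi(n)$ is a down-set in $X^+$ by condition (ii), this is a standard closure property. Hence $Q_n:=A\langle n\rangle/A\langle n-1\rangle$ is good and its $\nabla$-subquotients occur only at weights $\lambda$ with $h(\lambda)=n$. Condition (ii) also gives $A^{U^+}_\lambda\subset A\langle h(\lambda)\rangle$ (any $U^+$-fixed weight-$\lambda$ vector generates a submodule whose dominant weights are all $\leq\lambda$) and $A^{U^+}_\lambda\cap A\langle h(\lambda)-1\rangle=0$ (since $\lambda$ is not a dominant weight of $A\langle h(\lambda)-1\rangle$). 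Therefore $Q_n^{U^+}=\bigoplus_{h(\lambda)=n}A^{U^+}_\lambda$, and the canonical isomorphism applied to $Q_n$ yields $Q_n\cong\bigoplus_{h(\lambda)=n}A^{U^+}_\lambda\otimes\nabla_G(\lambda)$. Summing over $n$ produces a $G$-module isomorphism $\Phi:\Cal G(A)\overset{\sim}{\to}(A^{U^+}\otimes k[G]^U)^T$.

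To upgrade $\Phi$ to a $G$-algebra map I would invoke the functoriality of $W\mapsto(W^{U^+}\otimes k[G]^U)^T$ on the category of good $G$-modules. The multiplication of $A$ restricts to $G$-equivariant maps $A\langle n\rangle\otimes A\langle m\rangle\to A\langle n+m\rangle$ by multiplicativity of the Popov--Grosshans filtration; applying functoriality and passing to the induced maps on subquotients $Q_n\otimes Q_m\to Q_{n+m}$ forces $\Phi$ to intertwine the product on $\Cal G(A)$ with the product on the right-hand side coming from $A$ on the first tensor factor and from pointwise multiplication on $k[G]^U$ on the second.

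The main obstacle is this multiplicativity step. Concretely, for $v\in A^{U^+}_\lambda$ and $w\in A^{U^+}_\mu$ one must show that the image of $vw$ in $Q_{h(\lambda)+h(\mu)}$ matches $\Phi(\bar v)\cdot\Phi(\bar w)$ on the target; equivalently, that the product in $k[G]^U$ of the highest-weight matrix coefficients $f_\lambda,f_\mu$ picks out the Cartan summand $\nabla_G(\lambda+\mu)\hookrightarrow\nabla_G(\lambda)\otimes\nabla_G(\mu)$, which is precisely the component surviving in the associated graded. The universal case $A=k[G]^U$ is tautological from the construction of the canonical decomposition, and the general case is obtained by transferring along the embeddings provided by the $G$-coaction.
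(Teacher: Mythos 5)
The paper does not prove this statement; it cites it directly to Grosshans (his Theorem~16), so there is no in-paper argument to compare against. Evaluating your proposal on its own terms, there are two genuine gaps. The first is that the ``standard fact'' you invoke --- a canonical isomorphism $W\cong(W^{U^+}\otimes k[G]^U)^T$ for \emph{every} good $G$-module $W$ --- is false. The right-hand side always equals $\bigoplus_{\lambda\in X^+}W^{U^+}_\lambda\otimes\nabla_G(\lambda)$, a direct sum of dual Weyl modules, whereas a good module typically carries nontrivial extensions between its $\nabla$-layers (e.g.\ the indecomposable tilting module $\nabla_G(1)\otimes\nabla_G(p-1)$ for $\SL_2$ in characteristic $p$). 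Your application to the layer $Q_n=A\langle n\rangle/A\langle n-1\rangle$ happens to be correct, but for a reason you never surface: by property (ii) of $h$, any two distinct $\lambda,\mu$ with $h(\lambda)=h(\mu)=n$ are incomparable, which forces the relevant $\Ext^1_G$ groups to vanish and makes $Q_n$ genuinely a direct sum of $\nabla$'s. Without this observation the module-level identification of $\Cal G(A)$ is unjustified.

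The second and more serious gap is the multiplicativity, which you correctly flag as the main obstacle but do not resolve. Invoking ``functoriality of $W\mapsto(W^{U^+}\otimes k[G]^U)^T$'' cannot close it, precisely because (per the previous point) this functor is \emph{not} naturally isomorphic to the identity on good modules, so applying it to $A\langle n\rangle\otimes A\langle m\rangle\to A\langle n+m\rangle$ does not automatically produce the product on the target. Your final paragraph correctly identifies what must be checked --- that the leading term of $vw$ in $Q_{h(\lambda)+h(\mu)}$ lines up with the Cartan component $\nabla_G(\lambda+\mu)\subset\nabla_G(\lambda)\otimes\nabla_G(\mu)$, realized in $k[G]^U$ as $f_\lambda f_\mu=f_{\lambda+\mu}$ --- but then asserts the universal case $A=k[G]^U$ is ``tautological'' and the general case follows by ``transferring along the embeddings provided by the $G$-coaction,'' neither of which is an argument. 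The coaction $A\to A\otimes k[G]$ is not visibly compatible with the Popov--Grosshans filtration on both sides, so the reduction to $k[G]^U$ is exactly where the content of Grosshans' theorem lives; a complete proof has to make the degeneration via the Rees algebra $\Cal R(A)\subset A[t]$ (or an equivalent horospherical-contraction argument) explicit rather than invoking naturality of a construction that is not a natural equivalence.
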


The direct product $G\times G$ acts on the coordinate ring $k[G]$ by
\[
((g_1,g_2)f)(g)=f(g_1^{-1}gg_2)\qquad
(f\in k[G],\;g,g_1,g_2\in G(k)).
\]
In particular, $k[G]$ is a $G\times B$-algebra.
Taking the invariant subring by the subgroup $U=\{e\}\times U\subset 
G\times B$, $k[G]^U$ is a $G\times T$-algebra, since $T$ normalizes
$U$.
Thus $k[G]^U=\bigoplus_{\lambda\in X(T)}k[G]^U_\lambda$ is an
$X(T)$-graded $G$-algebra.
As a $G$-module,
\[
k[G]^U_\lambda=\{f\in k[G]\mid f(gb)=\lambda(b)f(g)\}\cong 
((-\lambda)\otimes k[G])^B=\ind_B^G(-\lambda)
\]
for $\lambda\in X(B)=X(T)$ by the definition of induction, see 
\cite[(I.3.3)]{Jantzen}.
Thus we have:

\begin{lemma}\label{decompose-k[G]^U.thm}
$k[G]^U\cong \bigoplus_{\lambda\in X^+}\nabla_G(\lambda)\boxtimes (-\lambda)$ 
as a $G\times T$-module.
It is an integral domain.
\qed
\end{lemma}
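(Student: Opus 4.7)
The plan is to decompose $k[G]^U$ by its right $T$-action and read off each weight component from the identification established in the paragraph immediately preceding the lemma statement.

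First I would note that since $T$ is a torus, every rational $T$-module decomposes as the direct sum of its weight spaces, so $k[G]^U=\bigoplus_{\lambda\in X(T)} k[G]^U_\lambda$ with $T$ acting on $k[G]^U_\lambda$ with weight $\lambda$. The paper has already exhibited a $G$-equivariant isomorphism $k[G]^U_\lambda\cong \ind_B^G(-\lambda)$, and by \cite[(II.2.6)]{Jantzen} this induced module vanishes unless $-\lambda\in X^+$, in which case it equals $\nabla_G(-\lambda)$.

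Next I would reindex by substituting $\mu=-\lambda$. The nonzero components are then labeled by $\lambda=-\mu$ with $\mu\in X^+$; such a component has $T$-weight $-\mu$ and underlying $G$-module $\nabla_G(\mu)$. Assembling these summands yields the asserted $G\times T$-module isomorphism $k[G]^U\cong \bigoplus_{\mu\in X^+}\nabla_G(\mu)\boxtimes(-\mu)$.

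For the domain assertion, the plan is simply to observe that since $G$ is connected reductive over the algebraically closed field $k$, it is an irreducible variety, so $k[G]$ is an integral domain, and hence so is its subring $k[G]^U$. I do not expect any real obstacle for this lemma; the content is essentially bookkeeping once the weight-space formula from the preceding paragraph is in hand.
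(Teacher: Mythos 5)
Your proposal is correct and follows the same route the paper takes: use the right $T$-weight decomposition of $k[G]^U$, invoke the identification $k[G]^U_\lambda\cong\ind_B^G(-\lambda)$ from the preceding paragraph together with the vanishing/nonvanishing criterion from \cite[(II.2.6)]{Jantzen}, and reindex by $\mu=-\lambda$; the domain statement follows because $G$ is a connected (hence irreducible) variety. This is exactly the bookkeeping the paper intends when it writes ``Thus we have:'' before the lemma.
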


The converse is also true.

\begin{lemma}\label{G-T.thm}
Let $A$ be a $G\times T$-algebra such that $A\cong \bigoplus_{\lambda
\in X^+}\nabla_G(\lambda)\boxtimes(-\lambda)$ as a $G\times T$-module
and that $A^{U^+}$ is a domain, where $U^+=U^+\times\{e\}\subset G\times T$.
Then $A\cong k[G]^U$ as a $G\times T$-algebra.
\end{lemma}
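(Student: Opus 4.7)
The plan is to identify $A$ with $k[G]^U$ by first pinning down $A^{U^+}$ as a twisted monoid algebra, then comparing it to $(k[G]^U)^{U^+}$, and finally propagating the resulting isomorphism to all of $A$ via Frobenius reciprocity. As a $T\times T$-module
\[
A^{U^+}=\bigoplus_{\lambda\in X^+}\nabla_G(\lambda)^{U^+}\boxtimes(-\lambda),
\]
and each summand is one dimensional, spanned by the highest weight line of $\nabla_G(\lambda)$, on which $T\times T$ acts with weight $(\lambda,-\lambda)$. Picking a nonzero $v_\lambda$ in each summand, the $X^+$-graded commutative multiplication becomes $v_\lambda v_\mu=c(\lambda,\mu)v_{\lambda+\mu}$ for a symmetric normalized $2$-cocycle $c\colon X^+\times X^+\to k$; the hypothesis that $A^{U^+}$ is a domain forces $c$ to take values in $k^*$, so $A^{U^+}$ is the twisted monoid algebra $k[X^+]_c$.

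The next step is to trivialize $c$. Inverting all of the $v_\lambda$ (they are nonzerodivisors) produces an $X(T)$-graded twisted group algebra $k[X(T)]_{\tilde c}$, because $X(T)=X^+-X^+$ is the group completion of $X^+$ (any weight becomes dominant after adding a large multiple of a regular dominant weight). Since $X(T)$ is free abelian, $\Ext^1(X(T),k^*)=0$, so the symmetric cocycle $\tilde c$ must be a coboundary $\tilde c(\chi,\chi')=d(\chi+\chi')/\bigl(d(\chi)d(\chi')\bigr)$ for some $d\colon X(T)\to k^*$. Replacing $v_\lambda$ by $d(\lambda)^{-1}v_\lambda$ then gives $v_\lambda v_\mu=v_{\lambda+\mu}$, so $A^{U^+}\cong k[X^+]$ as $T\times T$-graded algebras. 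The identical argument applies to $k[G]^U$, which is a domain by Lemma~\ref{decompose-k[G]^U.thm}, yielding a distinguished basis $\chi_\lambda$ of $(k[G]^U)^{U^+}$ with $\chi_\lambda\chi_\mu=\chi_{\lambda+\mu}$.

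Next I will construct the isomorphism. By Frobenius reciprocity $\End_G(\nabla_G(\lambda))=\Hom_B(\ind_B^G(\lambda),k_\lambda)=k$, the last equality because a $B$-linear map into the one-dimensional $k_\lambda$ is $T$-equivariant (hence zero off the one-dimensional $\lambda$-weight line) and automatically $U$-invariant on it. Hence for each $\lambda\in X^+$ there is a unique $G$-module isomorphism $\tilde\phi_\lambda\colon(k[G]^U)_\lambda\to A_\lambda$ sending $\chi_\lambda$ to $v_\lambda$; summing these gives a $G\times T$-linear graded isomorphism $\tilde\phi\colon k[G]^U\to A$, and all that remains is multiplicativity.

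To verify multiplicativity, I compare, for fixed $\lambda,\mu\in X^+$, the two $G$-linear maps
\[
\tilde\phi\circ m_{k[G]^U},\quad m_A\circ(\tilde\phi\otimes\tilde\phi)\colon\nabla_G(\lambda)\otimes\nabla_G(\mu)\to\nabla_G(\lambda+\mu),
\]
both of which send $\chi_\lambda\otimes\chi_\mu$ to $v_{\lambda+\mu}$ by construction. Any $G$-map of this form corresponds by Frobenius reciprocity to a $B$-linear map into $k_{\lambda+\mu}$, hence is determined by its restriction to the $(\lambda+\mu)$-weight subspace of the source; and weight considerations force this subspace to be one dimensional, spanned by $\chi_\lambda\otimes\chi_\mu$ (if $\nu$ is a weight of $\nabla_G(\lambda)$ and $\lambda+\mu-\nu$ is a weight of $\nabla_G(\mu)$, then both $\lambda-\nu$ and $\nu-\lambda$ are sums of positive roots, forcing $\nu=\lambda$). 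The two maps therefore coincide, $\tilde\phi$ is multiplicative, and the lemma follows. The main obstacle is the cocycle trivialization in the second paragraph; the remainder of the argument is a formal consequence of standard highest-weight technique.
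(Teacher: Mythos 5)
Your proof is correct, but it takes a genuinely different route from the paper's. The paper first cites \cite[Lemma~5.5]{Hashimoto4} to identify $A^{U^+}$ with the monoid algebra $k\varphi(X^+)$, then invokes Grosshans' filtration theorem (Theorem~\ref{grosshans.thm}, applied to $G'=G\times T$ with a height function $h'$) to write $A\cong\Cal G'(A)\cong(k\varphi(X^+)\otimes(k[G]^U\boxtimes k[T]))^{T'}$, and finally constructs an explicit algebra map $\psi:k[G]^U\to B$ and checks it is a $G\times T$-isomorphism. You bypass all of this machinery: you effectively reprove the relevant case of the cited lemma from scratch (the hypothesis that $A^{U^+}$ is a domain forces the structure constants to be nonzero, and $\Ext^1_{\Bbb Z}(X(T),k^*)=0$ lets you rescale away the resulting symmetric $2$-cocycle), then build the isomorphism $k[G]^U\to A$ directly piece by piece, using $\End_G(\nabla_G(\lambda))=k$ to pin down each $\tilde\phi_\lambda$, and verify multiplicativity by the highest-weight argument that a $G$-map into $\nabla_G(\lambda+\mu)$ is determined by its values on the one-dimensional $(\lambda+\mu)$-weight space of the source. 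Your approach is more elementary and self-contained, at the cost of not exploiting the Grosshans machinery that the paper already has on hand (and uses again in Corollary~\ref{U-F-rational.thm}); the paper's route in effect delegates the multiplicativity check to Theorem~\ref{grosshans.thm} and a short explicit computation with $\psi$. Both arguments are sound, and your Frobenius-reciprocity verification of multiplicativity is the clean ``missing link'' that one would otherwise have to extract from the proof of Grosshans' theorem.
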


\begin{proof}
Let $\varphi: X^+\rightarrow X(T)\times X(T)$ be the semigroup 
homomorphism given by $\varphi(\lambda)=(\lambda,-\lambda)$.
$A^{U^+}$ is a $\varphi(X^+)$-graded domain, and each homogeneous component
$A^{U^+}_{\varphi\lambda}=\nabla_G(\lambda)^{U^+}\boxtimes(-\lambda)$ is
one-dimensional.
So by \cite[Lemma~5.5]{Hashimoto4}, 
$A^{U^+}\cong k\varphi(X^+)$ as an $X(T)\times X(T)$-graded $k$-algebra.

Set $G'=G\times T$, and $T'=T\times T$.
Define $h': X(T')\cong X(T)\times X(T)\rightarrow \Bbb Z$ by
$h'(\lambda,\mu)=h(\lambda)$.
For a $G'$-algebra $A'$, we have a filtration of $A'$ from $h'$
as in (\ref{P-and-G.par}).
We denote the associated graded algebra by $\Cal G'(A')$.

It is easy to see that $A\cong\Cal G'(A)$, and this is isomorphic to
$B:=(k\varphi(X^+)\otimes (k[G]^U\boxtimes k[T]))^{T'}$ 
by Theorem~\ref{grosshans.thm} (applied to $G'$).
We define $\psi:k[G]^U\rightarrow B$ by
\[
\psi(a\otimes t_{-\lambda})=
(t_\lambda\otimes t_{-\lambda})\otimes (a\otimes t_{-\lambda})\otimes
t_{\lambda},
\]
where $a\in\nabla_G(\lambda)$ and for $\mu\in X(T)$, $t_\mu$ is the
element $\mu$ considered as a basis element of $kX(T)=k[T]$.
We consider that $\nabla_G(\lambda)\boxtimes(-\lambda)\subset k[G]^U$.
With respect to the left regular action, $t_\lambda$ is of weight $-\lambda$.
So $\psi$ is a $G\times T$-algebra isomorphism.
Thus $A\cong k[G]^U$ as a $G\times T$-algebra, as desired.
\end{proof}

Assume that $G$ is semisimple simply connected.
Then by \cite{Popov}, $X(B)\rightarrow \Pic(G/B)$ $(\lambda\mapsto
\Cal L(\lambda))$ is an isomorphism, where $\Cal L(\lambda)=\Cal L_{G/B}(
\lambda)$ is the
$G$-linearized invertible sheaf on the flag variety $G/B$,
associated to the $B$-module $\lambda$, see \cite[(I.5.8)]{Jantzen}.
Thus we have

\begin{lemma}\label{cox.thm}
If $G$ is semisimple and simply connected, then
the Cox ring \(the total coordinate ring, see {\rm\cite{Cox}}, 
{\rm\cite{EKW}}\)
$\Cox(G/B)$ is isomorphic to $\bigoplus_{\lambda\in X^+}\nabla_G(\lambda)$,
as an $X(T)$-graded $G$-module \(that is, $G\times T$-module\), where 
both $H^0(G/B,\Cal L(\lambda))\subset\Cox(G/B)$
and $\nabla_G(\lambda)$ are assigned degree $-\lambda$.
The Cox ring 
$\Cox(G/B)$ is also an integral domain, and hence isomorphic to $k[G]^U$ as
an $X(T)$-graded $G$-algebra.
\end{lemma}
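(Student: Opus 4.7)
My plan is to first unwind the definition of the Cox ring and then apply Lemma~\ref{G-T.thm}. Popov's result cited just above identifies $X(T)$ with $\Pic(G/B)$ via $\lambda\mapsto\Cal L(\lambda)$, so
\[
\Cox(G/B)=\bigoplus_{\lambda\in X(T)}H^0(G/B,\Cal L(\lambda)).
\]
Combining the standard identification $H^0(G/B,\Cal L(\lambda))\cong\ind_B^G(\lambda)$ with the vanishing $\ind_B^G(\lambda)=0$ for $\lambda\notin X^+$ quoted in the introduction yields the $X(T)$-graded $G$-module decomposition $\Cox(G/B)\cong\bigoplus_{\lambda\in X^+}\nabla_G(\lambda)$, with $\nabla_G(\lambda)$ placed in degree $-\lambda$ as in the statement. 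In particular, the resulting $T$-grading matches the one on $k[G]^U$ given by Lemma~\ref{decompose-k[G]^U.thm}.

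To upgrade this to an algebra isomorphism with $k[G]^U$, I would appeal to Lemma~\ref{G-T.thm} with $A=\Cox(G/B)$. Since $\nabla_G(\lambda)^{U^+}$ is the one-dimensional highest-weight line, spanned by a vector $v_\lambda$, we have $\Cox(G/B)^{U^+}=\bigoplus_{\lambda\in X^+}k\,v_\lambda$ as a $k$-vector space, and a $T$-weight argument shows $v_\lambda\cdot v_\mu\in k\,v_{\lambda+\mu}$ for all $\lambda,\mu\in X^+$. The only nontrivial point to check in order to apply Lemma~\ref{G-T.thm} is thus that $\Cox(G/B)^{U^+}$ is an integral domain, i.e.\ that each product $v_\lambda\cdot v_\mu$ is nonzero.

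This nonvanishing is the main obstacle, and I would deduce it geometrically. Viewed as a section of $\Cal L(\lambda)$, the highest weight vector $v_\lambda$ is a $B^+$-semi-invariant, so its zero divisor is $B^+$-stable and hence supported on Schubert divisors, which lie in the complement of the big cell $U^-B/B\subset G/B$. Therefore $v_\lambda$ and $v_\mu$ are both non-vanishing on the big cell, and so is their product $v_\lambda\cdot v_\mu\in H^0(G/B,\Cal L(\lambda+\mu))$; in particular it is nonzero. This yields $\Cox(G/B)^{U^+}\cong k[X^+]$ as a semigroup algebra, an integral domain, and Lemma~\ref{G-T.thm} then delivers the isomorphism $\Cox(G/B)\cong k[G]^U$ of $X(T)$-graded $G$-algebras. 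In particular $\Cox(G/B)$ is itself a domain, which completes the claim.
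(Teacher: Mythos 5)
Your proof is correct, but the argument for integrality takes a genuinely different route from the paper's. After the common first step (combining $\Pic(G/B)\cong X(T)$ with $H^0(G/B,\Cal L(\lambda))=\nabla_G(\lambda)$ for $\lambda\in X^+$ and $=0$ otherwise), the paper proves that the whole ring $\Cox(G/B)\cong\Gamma(G/B,\Cal S)$ with $\Cal S=\Sym(\bigoplus_i\Cal L(\lambda_i))$ is a domain, by observing that $\uSpec\Cal S$ is a vector bundle over the integral scheme $G/B$ and hence integral; the domain hypothesis of Lemma~\ref{G-T.thm} (which concerns $A^{U^+}$) then follows since a subring of a domain is a domain. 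You instead verify the hypothesis of Lemma~\ref{G-T.thm} directly: you identify $\Cox(G/B)^{U^+}$ with the span of highest weight vectors $v_\lambda$ and show $v_\lambda v_\mu\neq 0$ by a big-cell argument, concluding $\Cox(G/B)^{U^+}\cong k[X^+]$; integrality of $\Cox(G/B)$ itself is then read off afterward from the resulting isomorphism with $k[G]^U$. Both routes work, and yours is more explicit about where the highest weight sections live; the paper's is shorter and avoids the Schubert-cell discussion. One small slip: with the paper's convention that $B$ is the negative Borel (so $U=U^-$), the open cell on which $v_\lambda$ is nonvanishing is $U^+B/B$, not $U^-B/B$ as you wrote, the latter being a point. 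The underlying geometric reasoning is nonetheless sound.
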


\begin{proof}
The first assertion follows from the fact that 
$H^0(G/B,\Cal L(\lambda))=\nabla_G(\lambda)$ for $\lambda
\in X^+$ and $H^0(G/B,\Cal L(\lambda))=0$ for $\lambda\in X(B)\setminus X^+$
\cite[(II.2.6)]{Jantzen}.

Consider the $\Cal O_{G/B}$-algebra $\Cal S:=\Sym(\Cal L(\lambda_1)\oplus
\cdots\oplus\Cal L(\lambda_l))$, where $\lambda_1,\ldots,\lambda_l$ are
the fundamental dominant weights.
Being a vector bundle over $G/B$, $\uSpec\Cal S$ is integral.
Hence $\Cox(G/B)\cong\Gamma(G/B,\Cal S)$ is a domain.

The last assertion follows from Lemma~\ref{G-T.thm}.
\end{proof}

\begin{lemma}\label{UFD.thm}
If $G$ is semisimple and simply connected, then $k[G]^U$ is a UFD.
\end{lemma}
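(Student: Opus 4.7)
The plan is to apply a Rosenlicht-style descent: the ring of invariants of a UFD under the action of a connected algebraic group without nontrivial characters is again a UFD. Since $G$ is semisimple and simply connected, $\Pic(G)=0$ (a classical fact; cf.\ \cite{Popov}), and as $G$ is smooth and affine with $X(G)=0$, the Noetherian normal ring $k[G]$ is a UFD with $k[G]^\times=k^\times$. The right-translation action of the connected unipotent group $U$ on $k[G]$ then descends to give $k[G]^U$ a UFD.

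The descent step I would carry out as follows. Take $0\ne f\in k[G]^U$ not a unit and factor $f=c\prod f_i^{n_i}$ into irreducibles in $k[G]$. Because $U$ is connected and the set of associate classes $\{[f_i]\}$ is finite, the action of $U$ on this finite discrete set is trivial; hence $u\cdot f_i=\chi_i(u)f_i$ for some character $\chi_i$ of $U$. Unipotence of $U$ forces every $\chi_i$ to be trivial, so each $f_i\in k[G]^U$. Irreducibility of $f_i$ in $k[G]^U$ follows from its irreducibility in $k[G]$ together with $(k[G]^U)^\times=k^\times$, and uniqueness of factorization in $k[G]^U$ descends from uniqueness in $k[G]$. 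This exhibits $k[G]^U$ as a UFD.

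The main obstacle is the classical input $\Pic(G)=0$ for $G$ semisimple and simply connected; once this is accepted, the rest of the argument is formal. An alternative route more closely aligned with Lemma~\ref{cox.thm} would be to invoke the fact that the Cox ring of a smooth projective variety with torsion-free Picard group is a UFD (see \cite{EKW}); applied to $G/B$, whose Picard group $\Pic(G/B)\cong X(T)$ is free of finite rank by the simple connectedness of $G$, this gives the same conclusion via the isomorphism $k[G]^U\cong\Cox(G/B)$. Either way, the bulk of the work has already been done in Lemma~\ref{cox.thm}, and the present lemma reduces to a single classical citation together with a short formal argument.
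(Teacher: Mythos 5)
Your argument coincides with the paper's: the paper gives exactly these two proofs, citing Popov's result that $k[G]$ is a UFD, the vanishing of $X(G)$ and Rosenlicht's theorem to get $k[G]^\times=k^\times$, and then Remark~3 after Proposition~2 of \cite{Popov3} (see also \cite[(4.31)]{UFD}) for the $U$-descent, with the Cox-ring route via Lemma~\ref{cox.thm} and \cite[Corollary~1.2]{EKW} listed as the first proof and the Popov--Rosenlicht descent as the second. You merely present them in the opposite order and spell out the descent step (connectedness of $U$ acting on associate classes, unipotence killing the resulting character) that the paper delegates to the cited remark; the content is the same.
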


\begin{proof}
This is a consequence of Lemma~\ref{cox.thm} and
\cite[Corollary~1.2]{EKW}.

There is another proof.
Popov \cite{Popov} proved that $k[G]$ is a UFD.
Moreover, $G$ does not have a nontrivial character, since $G=[G,G]$, see
\cite[(29.5)]{Humphreys}.
It follows easily that $k[G]^\times=k^\times$ by \cite[Theorem~3]{Rosenlicht}.
As $U$ is unipotent, $U$ does not have a nontrivial character.
The lemma follows from Remark~3 after Proposition~2 of \cite{Popov3}.
See also \cite[(4.31)]{UFD}.
\end{proof}

By \cite[(2.1)]{Grosshans}, $k[G]^U$ is finitely generated.
See also \cite{RR} and \cite[Theorem~9]{Grosshans2}.

By \cite[Lemma~5.6]{Hashimoto4} and
Lemma~\ref{decompose-k[G]^U.thm}, $k[G]^U$ is strongly $F$-regular
in positive characteristic, and strongly $F$-regular type in
characteristic zero.
In any characteristic, $k[G]^U$ is Cohen--Macaulay normal.

In any characteristic, if $G$ is semisimple simply connected,
being a finitely generated Cohen--Macaulay UFD, $k[G]^U$ is
Gorenstein \cite{Murthy}.

Combining the observations above, we have:

\begin{lemma}\label{k[G]^U.thm}
$k[G]^U$ is finitely generated.
It is strongly $F$-regular in positive characteristic,
and strongly $F$-regular type in characteristic zero.
If $G$ is semisimple and simply connected,
then $k[G]^U$ is a Gorenstein UFD.
\end{lemma}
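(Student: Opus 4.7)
The plan is to assemble the lemma from the several facts already stated in the preceding paragraphs. First, I would establish finite generation by invoking Grosshans' theorem [Grosshans, (2.1)]: for a reductive $G$ and any finitely generated $G$-algebra $A$, the subalgebra $A^U$ is finitely generated; applied to $A=k[G]$ this immediately gives the first claim.

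For the $F$-regularity statement, my approach is to exploit the explicit weight decomposition
\[
k[G]^U\cong\bigoplus_{\lambda\in X^+}\nabla_G(\lambda)\boxtimes(-\lambda)
\]
furnished by Lemma~\ref{decompose-k[G]^U.thm}. The dominant cone $X^+$ is a saturated subsemigroup of $X(T)$, and each weight component has one-dimensional $U^+$-invariants, so Lemma~5.6 of [Hashimoto4] (which is designed exactly for this kind of multiplicity-one, saturated weight decomposition) delivers strong $F$-regularity in positive characteristic and strong $F$-regular type in characteristic zero. Combined with Lemma~\ref{F-sing.thm} (parts (ii), (iv)--(v)), strong $F$-regularity forces $k[G]^U$ to be Cohen--Macaulay and normal; the characteristic zero case inherits Cohen--Macaulay normality via reduction mod $p$ from the strongly $F$-regular type property.

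For the last clause, assume $G$ is semisimple and simply connected. The UFD assertion has already been proved as Lemma~\ref{UFD.thm}. Once $k[G]^U$ is known to be a finitely generated Cohen--Macaulay UFD over $k$, Murthy's theorem [Murthy] --- a finitely generated Cohen--Macaulay UFD over a field is Gorenstein --- yields the final assertion.

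The genuinely substantive inputs are Grosshans' finite generation theorem, Lemma~5.6 of [Hashimoto4], and Murthy's theorem; granting these, the proof is a bookkeeping exercise. The step requiring the most care is verifying the hypotheses of the saturated-semigroup $F$-regularity criterion against the actual decomposition in Lemma~\ref{decompose-k[G]^U.thm}, and noting that the simply-connected hypothesis is used only in the UFD (and hence Gorenstein) part, via Lemma~\ref{UFD.thm}.
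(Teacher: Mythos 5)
Your proposal matches the paper's own argument step for step: finite generation via Grosshans (2.1), strong $F$-regularity (type) via Lemma~\ref{decompose-k[G]^U.thm} together with \cite[Lemma~5.6]{Hashimoto4}, the UFD property from Lemma~\ref{UFD.thm}, and Gorenstein from Murthy's theorem. Nothing is missing and the route is the same, including your observation that simple connectedness enters only through the UFD step.
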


\begin{corollary}\label{U-F-rational.thm}
Let $k$ be of positive characteristic, and
$A$ be a $G$-algebra which is good as a $G$-module.
If $A^{U}$ is finitely generated and strongly $F$-regular,
then $A$ is finitely generated and $F$-rational.
\end{corollary}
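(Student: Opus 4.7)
\medskip

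\noindent\textbf{Proof proposal.}
The plan is to compare $A$ with the associated graded ring $\Cal G(A)$ of
Popov--Grosshans, whose structure we can access through the hypothesis
on $A^U$ and the already-analyzed ring $k[G]^U$. Since $U$ and $U^+$ are
conjugate in $G$ by a representative of the longest Weyl element $w_0$, the
$k$-algebra automorphism of $A$ induced by this conjugation sends $A^U$
isomorphically onto $A^{U^+}$, so $A^{U^+}$ is again finitely generated and
strongly $F$-regular. The finite generation of $A$ as a $k$-algebra then
follows from Grosshans' finiteness theorem: a $G$-algebra whose $U^+$-invariants
are finitely generated is itself finitely generated.

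Next, I would analyze $\Cal G(A)$. Theorem~\ref{grosshans.thm} gives the
$G$-algebra isomorphism
\[
\Cal G(A) \;\cong\; (A^{U^+}\otimes_k k[G]^U)^T .
\]
By Lemma~\ref{k[G]^U.thm}, $k[G]^U$ is finitely generated and strongly
$F$-regular. The tensor product of two finitely generated strongly
$F$-regular $k$-algebras over the algebraically closed field $k$ of
characteristic $p$ is again strongly $F$-regular (this is the content of the
tensor-product argument used for $k[G]^U$ itself in the proof of
Lemma~\ref{k[G]^U.thm}; cf.\ \cite[Lemma~5.6]{Hashimoto4}).
Since $T$ is a torus, hence linearly reductive, the inclusion
$(A^{U^+}\otimes_k k[G]^U)^T \hookrightarrow A^{U^+}\otimes_k k[G]^U$
splits as a map of modules and is in particular pure, so by
Lemma~\ref{F-sing.thm}(iii) the invariant ring $\Cal G(A)$ is strongly
$F$-regular, and in particular $F$-rational by Lemma~\ref{F-sing.thm}(ii).

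Finally I would lift this conclusion from $\Cal G(A)$ to $A$ via the Rees
algebra $\Cal R(A) = \bigoplus_n A\langle n\rangle t^n \subset A[t]$.
One has $\Cal R(A)/t\Cal R(A) = \Cal G(A)$ with $t$ a nonzerodivisor in
$\Cal R(A)$, and lifting finitely many homogeneous $k$-algebra generators of
$\Cal G(A)$ and adjoining $t$ exhibits $\Cal R(A)$ as a finitely generated
(hence excellent) $k$-algebra. The classical deformation theorem for
$F$-rationality through a regular element then yields that $\Cal R(A)$ is
$F$-rational. Inverting $t$ gives $\Cal R(A)_t = A[t,t^{-1}]$, which remains
$F$-rational since $F$-rationality is preserved under localization. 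Since
$A \to A[t,t^{-1}]$ is faithfully flat, $F$-rationality descends, and $A$ is
$F$-rational.

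The main obstacle is the interface in the middle: one has to invoke closure
properties of $F$-singularities that are not formal, namely the preservation
of strong $F$-regularity under $\otimes_k$ and, above all, the deformation of
$F$-rationality through the regular element $t$. The plan is deliberately
arranged so that only $F$-rationality (not strong $F$-regularity) need deform,
since deformation of the stronger property is a known subtle point; the
stronger conclusion on $\Cal G(A)$ is used only to supply enough slack for this
loss. The faithfully-flat descent at the very end is routine once $\Cal R(A)$
is handled.
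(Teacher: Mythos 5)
Your proof follows essentially the same route as the paper's: conjugacy of $U$ and $U^+$, Grosshans' finiteness theorem, the structure theorem $\Cal G(A)\cong(A^{U^+}\otimes k[G]^U)^T$, strong $F$-regularity of the tensor product and its direct summand, hence $F$-rationality of $\Cal G(A)$. The only divergence is at the final step: the paper simply cites \cite[(7.14)]{HM} to pass from ``$\Cal G(A)$ is $F$-rational'' to ``$A$ is $F$-rational,'' whereas you open that black box and run the Rees-ring deformation argument yourself. That is a legitimate thing to do, and your chain $\Cal R(A)/t\Cal R(A)=\Cal G(A)$, then $\Cal R(A)_t=A[t,t^{-1}]$, then faithfully flat descent along $A\to A[t,t^{-1}]$, is the standard mechanism behind such results.

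The one place where your version is thinner than it should be is the sentence ``the classical deformation theorem $\ldots$ yields that $\Cal R(A)$ is $F$-rational.'' The deformation theorem (e.g.\ \cite[(4.2)]{HH3}) is a statement about local rings: it gives $\Cal R(A)_{\fq}$ $F$-rational only for primes $\fq$ that \emph{contain} $t$. It does not by itself say anything about primes $\fq$ with $t\notin\fq$, and those are exactly the ones relevant to $\Cal R(A)_t=A[t,t^{-1}]$ — so as written there is a circularity. The fix is the graded reduction: $\Cal R(A)$ is $\Bbb N$-graded with $t$ homogeneous of degree one, $F$-rationality is a local and localization-stable condition, and for any prime $\fp$ of a graded Noetherian ring $R$, $R_\fp$ is a localization of $R_{\fp^*}$ where $\fp^*$ is the homogeneous prime generated by the homogeneous elements of $\fp$. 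For homogeneous $\fp^*$, the ideal $\fp^*+(t)$ is always proper (since $t$ has positive degree, a degree-zero decomposition of $1$ would have to land in $\fp^*$), so $\fp^*$ sits below some prime containing $t$, and deformation applies there. With that paragraph supplied, your argument is complete and matches what the cited \cite[(7.14)]{HM} is doing under the hood; without it, the jump from the local deformation statement to global $F$-rationality of $\Cal R(A)$ is unjustified. Also, for the finite generation of $\Cal R(A)$ you could simply cite \cite[Theorem~9]{Grosshans2} rather than the lifting-generators sketch, which as stated needs a Nakayama-type or degree-by-degree argument to close.
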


\begin{proof}
This is proved similarly to 
\cite[Proposition~10]{Popov2} and \cite[Theorem~17]{Grosshans2}.

As $U$ and $U^+$ are conjugate, $A^{U^+}\cong A^U$, and it is
finitely generated and strongly $F$-regular by assumption.
Note that $A$ is finitely generated \cite[Theorem~9]{Grosshans2}.

Note that $k[G]^U$ is finitely generated and strongly $F$-regular
by Lemma~\ref{k[G]^U.thm}.
So the tensor product $A^{U^+}\otimes k[G]^U$ is finitely generated and
strongly $F$-regular
by \cite[(5.2)]{Hashimoto4}.
Thus its direct  summand subring $(A^{U^+}\otimes k[G]^U)^T$ is also
finitely generated and strongly $F$-regular \cite[(3.1)]{HH}.
By Theorem~\ref{grosshans.thm}, $\Cal G(A)$ is finitely generated and 
strongly $F$-regular, hence
is $F$-rational (\cite[(3.1)]{HH} and \cite[(4.2)]{HH3}).
By \cite[(7.14)]{HM}, $A$ is $F$-rational.
\end{proof}

\section{The main result}

Let the notation be as in the introduction.
In this section, the characteristic of $k$ is $p>0$.

\paragraph
For a $G$-module $W$ and $r\geq 0$, $W^{(r)}$ denotes the $r$th 
Frobenius twist of $W$, see Section~\ref{prelimilaries}
and \cite[(I.9.10)]{Jantzen}.

Let $\rho$ denote the half sum of positive roots.
For $r\geq 0$, let $\St_r$ denote the $r$th Steinberg module
$\nabla_G((p^r-1)\rho)$, if $(p^r-1)\rho$ is a weight of $G$.
Note that $(p^r-1)\rho$ is a weight of $G$ if $p$ is odd or
$[G,G]$ is simply connected.

The following lemma, which is the dual assertion of 
\cite[Theorem~3]{Hashimoto}, follows immediately from
\cite[(10.6)]{Jantzen}.

\begin{lemma}\label{steinberg.thm}
Let $(p^r-1)\rho$ be a weight of $G$ for any $r\geq 0$.
Let $V$ be a finite dimensional $G$-module.
Then there exists some $r_0\geq 1$ such that for any $r\geq r_0$ and
any subquotient $W$ of $V$, any nonzero \(or equivalently, surjective\)
$G$-linear map $\varphi:\St_r\otimes W\rightarrow \St_r$ admits a 
$G$-linear map $\psi:\St_r\rightarrow \St_r\otimes W$ such that
$\varphi\psi=\id_{\St_r}$.
\end{lemma}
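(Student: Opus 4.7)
The plan is to transfer the statement to its dual form, invoking \cite[Theorem~3]{Hashimoto} (which itself comes out of \cite[(10.6)]{Jantzen}) after a self-duality manoeuvre on $\St_r$. Two preliminary remarks are needed. First, $\St_r$ is a simple $G$-module (it is the Steinberg module, the unique simple $G$-module of highest weight $(p^r-1)\rho$), so every nonzero $G$-linear map into $\St_r$ is surjective; this gives the parenthetical equivalence in the statement. Second, $\St_r$ is self-dual: from $-w_0\rho=\rho$ we get $-w_0((p^r-1)\rho)=(p^r-1)\rho$, hence by definition $\St_r^{*}=\nabla_G((p^r-1)\rho)^{*}=\Delta_G((p^r-1)\rho)$; since the Weyl module $\Delta_G((p^r-1)\rho)$ has the same formal character as the dual Weyl module $\St_r$ and surjects onto its simple head $\St_r$, equality of dimensions forces $\Delta_G((p^r-1)\rho)\cong \St_r$, so $\St_r^{*}\cong \St_r$ as $G$-modules.

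Given a surjection $\varphi\colon \St_r\otimes W\twoheadrightarrow \St_r$, I would apply $(?)^{*}$ to obtain a $G$-injection $\varphi^{*}\colon \St_r^{*}\hookrightarrow \St_r^{*}\otimes W^{*}$; by the self-duality above this becomes a $G$-injection $\tilde\varphi\colon \St_r\hookrightarrow \St_r\otimes W^{*}$. Since $W$ is a subquotient of $V$, $W^{*}$ is a subquotient of the finite-dimensional $G$-module $V^{*}$. I would then invoke \cite[Theorem~3]{Hashimoto} applied to $V^{*}$ to obtain $r_0\geq 1$ such that for all $r\geq r_0$ every such injection admits a $G$-linear retraction $\sigma\colon \St_r\otimes W^{*}\to \St_r$. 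Setting $\psi:=\sigma^{*}\colon \St_r\to \St_r\otimes W$ then gives $\varphi\psi=(\sigma\tilde\varphi)^{*}=\id_{\St_r}$, which is the required splitting.

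The actual content packaged inside \cite[(10.6)]{Jantzen} is that, for $r$ large compared to the highest weights appearing in $V$, the $G$-module $\St_r\otimes W^{*}$ has an injectivity/projectivity property (as a $G_rT$-module, since $\St_r$ itself is injective-projective there) that forces the vanishing of $\Ext^{1}_G(\St_r,\St_r\otimes W^{*})$, so that the injection $\tilde\varphi$ automatically splits. The main technical point to worry about is the uniformity of $r_0$ across all subquotients $W$ of $V$; this is not really an obstacle, since $V$ has only finitely many composition factors and the bound coming from \cite[(10.6)]{Jantzen} depends only on their highest weights, which are a priori bounded by those of $V$.
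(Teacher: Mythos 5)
Your argument is correct and is essentially the same route the paper takes: the paper simply records that the lemma is the dual assertion of \cite[Theorem~3]{Hashimoto} and follows from \cite[(10.6)]{Jantzen}, and your proposal spells out exactly this dualization (simplicity and self-duality of $\St_r$, applying the cited theorem to $V^*$, then dualizing back). The only cosmetic remark is that the self-duality of $\St_r$ follows at once from its simplicity together with $\nabla_G(\lambda)^*=\Delta_G(-w_0\lambda)$ and $-w_0\rho=\rho$, without the detour through comparing formal characters.
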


We set $\tilde G=\rad G\times \Gamma$, where $\rad G$ is the radical of $G$,
and $\Gamma\rightarrow [G,G]$ is the universal covering of the
derived subgroup $[G,G]$ of $G$.
Note that there is a canonical surjective 
map $\tilde G\rightarrow G$, and hence
any $G$-module (resp.\ $G$-algebra) 
is a $\tilde G$-module (resp.\ $\tilde G$-algebra) in a natural way.
The restriction functor $\res_{\tilde G}^G$ is full and faithful.

Let $S=\bigoplus_{i\geq 0}S_i$ be a positively graded finitely generated
$G$-algebra which is an integral domain.

Assume first that $(p^r-1)\rho$ is a weight of $G$ for $r\geq 0$.
We say that $S$ is {\em $G$-strongly $F$-regular} if for any
nonzero homogeneous element $a$ of $S^G$,
there exists some $r\geq 1$ such that
the $(G,S^{(r)})$-linear map
\[
\id\otimes aF^r: \St_r\otimes S^{(r)}\rightarrow \St_r\otimes S
\qquad
(x\otimes s^{(r)}\mapsto x\otimes as^{p^r})
\]
is a split mono.
In general, we say that  $S$ is $G$-strongly $F$-regular if it is so
as a $\tilde G$-algebra.

The following is essentially proved in \cite{Hashimoto}.
We give a proof for completeness.

\begin{lemma}\label{gsfr-sfr.thm}
If $S$ is a $G$-strongly $F$-regular
positively graded finitely generated $G$-algebra domain, 
then $S^G$ is strongly $F$-regular.
\end{lemma}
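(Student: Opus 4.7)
The plan is to establish weak $F$-regularity of $S^G$, which by Lemma~\ref{F-sing.thm}(vii) and (i) upgrades to strong $F$-regularity (applicable since $S^G$ is a finitely generated positively graded $k$-algebra with $(S^G)_0=k$, hence $F$-finite). First I would reduce to the case $G=\tilde G$ using $S^G=S^{\tilde G}$, so that the Steinberg modules $\St_r$ are available. It then suffices to verify $I^*=I$ for every ideal $I=(g_1,\ldots,g_n)$ of $S^G$, and by the graded structure it is enough to treat homogeneous $x\in I^*$.

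Given such $x$, tight closure furnishes a homogeneous $c\in(S^G)^\circ$ with $cx^{p^e}\in I^{[p^e]}$ in $S^G$ for all $e\gg 0$, say $cx^{p^e}=\sum_j f_jg_j^{p^e}$ with $f_j\in S^G$. Applying $G$-strong $F$-regularity to $c$ at a sufficiently large $e$ produces a $(G,S^{(e)})$-linear retraction $\Psi\colon\St_e\otimes S\to\St_e\otimes S^{(e)}$ of $\id\otimes cF^e$. Letting $v^+\in\St_e$ be a highest weight vector, the splitting identity together with the $S^{(e)}$-linearity of $\Psi$ yields
\[
v^+\otimes x^{(e)}=\Psi(v^+\otimes cx^{p^e})=\sum_j\Psi(v^+\otimes f_jg_j^{p^e})=\sum_j g_j^{(e)}\,\Psi(v^+\otimes f_j).
\]
Pairing both sides with the linear functional $p^+\in\St_e^*$ normalized by $p^+(v^+)=1$ and vanishing on the other weight spaces (applied as $p^+\otimes\id_{S^{(e)}}$, which is $S^{(e)}$-linear) produces $x^{(e)}=\sum_j g_j^{(e)}\,y_j$ in $S^{(e)}$ for certain $y_j\in S^{(e)}$. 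Translating along the ring identification $S^{(e)}\cong S$ (where $a^{(e)}b^{(e)}=(ab)^{(e)}$) gives $x\in IS$ in $S$.

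Since $G$ is reductive and hence geometrically reductive by Haboush's theorem, the inclusion $S^G\hookrightarrow S$ is pure, so $IS\cap S^G=I$, and we conclude $x\in I$. This establishes $S^G$ is weakly $F$-regular, completing the proof. The main technical obstacle is matching the Frobenius exponent $e$ in the tight closure condition with the one produced by the $G$-strong $F$-regularity hypothesis: tight closure requires $e\gg 0$, while the hypothesis only furnishes a splitting at some particular $r_c$ depending on $c$. I would handle this by observing that the existence of a splitting at a single $r$ propagates to arbitrarily large Frobenius levels (either by replacing $c$ with a suitable Frobenius power and reapplying the hypothesis, or by standard propagation arguments in $F$-singularity theory), so the two exponents can be aligned. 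The purity step is where reductivity of $G$ enters crucially; once it is granted, the rest of the argument is a direct and essentially formal consequence of the splitting and the $S^{(e)}$-linearity.
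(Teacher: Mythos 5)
Your argument has a genuine gap at the final purity step. You appeal to Haboush's theorem to assert that $S^G\hookrightarrow S$ is pure, but geometric reductivity does not imply purity. If $S^G\hookrightarrow S$ were pure with $S=\Sym V$ regular, then $S^G$ would be a pure subring of a strongly $F$-regular ring and hence, by Lemma~\ref{F-sing.thm}~\textbf{(iii)}, strongly $F$-regular, so in particular Cohen--Macaulay. But by Kemper's example, already cited in the introduction, there exist finite-dimensional $G$-modules $V$ over a reductive non-linearly-reductive $G$ with $(\Sym V)^G$ not Cohen--Macaulay. So purity fails in exactly the situation of interest, and the passage from $x\in IS$ to $x\in I$ is unavailable. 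The structural reason you end up needing purity is that pairing with $p^+\in\St_e^*$ destroys $G$-equivariance: the element $y_j=(p^+\otimes\id)\Psi(v^+\otimes f_j)$ lands only in the zero $T$-weight space of $S^{(e)}$, not in $A^{(e)}=(S^G)^{(e)}$, so your relation $x^{(e)}=\sum_j g_j^{(e)}y_j$ is an identity in $S^{(e)}$ rather than in $A^{(e)}$.

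The paper sidesteps this by applying the \emph{entire} functor $\Hom_G(\St_r,?)$ rather than pairing against a single weight functional. Since $A^{(r)}$ is $G$-trivial and $\End_G(\St_r)=k$, one has $\Hom_G(\St_r,\St_r\otimes A^{(r)})\cong A^{(r)}$; applying $\Hom_G(\St_r,?)$ to the commutative square built from the splitting $\Phi$ of $\id\otimes aF^r$ produces a splitting of $aF^r\colon A^{(r)}\to A$ landing intrinsically in $A$. Choosing a homogeneous $a$ with $A[1/a]$ regular, one concludes directly from \cite[(3.3)]{HH}; no tight-closure computation, no purity, and no weak-to-strong upgrade are needed. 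Your secondary worry about aligning the tight-closure exponent $e\gg0$ with the hypothesis' exponent $r$ is also not as automatic as you suggest (note $F^{e-r}\circ(cF^r)^{(e-r)}$ equals $c^{p^{e-r}}F^e$, not $cF^e$), but that point becomes moot once the proof is routed through a direct splitting of $aF^r$ on $A$.
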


\begin{proof}
We may assume that $G=\tilde G$.
Let $A:=S^G$.

As we assume that $S$ is a finitely generated 
positively graded domain, $A$ is a finitely generated positively graded
domain, see \cite[Appendix to Chapter~1, {\bf A}]{GIT}.
Let $a$ be a nonzero homogeneous element of $A$ such that
$A[1/a]$ is regular.
Take $r\geq 1$ so that 
$\id\otimes aF^r: \St_r\otimes S^{(r)}\rightarrow \St_r\otimes S$
is a split mono.
Let $\Phi:\St_r\otimes S\rightarrow \St_r\otimes S^{(r)}$ be 
a $(G,S^{(r)})$-linear map such that $\Phi\circ(\id\otimes aF^r)=\id$.
Then consider the commutative diagram of $(G,A^{(r)})$-modules
\[
\xymatrix{
\St_r\otimes A^{(r)} \ar@{^{(}->}[r] \ar[d]^{aF^r} &
\St_r\otimes S^{(r)} \ar[r]^\id \ar[d]^{aF^r} &
\St_r\otimes S^{(r)} \\
\St_r\otimes A \ar@{^{(}->}[r] &
\St_r\otimes S. \ar[ur]^\Phi
}
\]
Then applying the functor $\Hom_G(\St_r,?)$ to this diagram, we get
the commutative diagram of $A^{(r)}$-modules
\[
\xymatrix{
A^{(r)} \ar[r]^\id \ar[d]^{aF^r} &
A^{(r)} \ar[r]^\id \ar[d] &
A^{(r)} \\
A \ar[r] &
\Hom_G(\St_r,\St_r\otimes S), \ar[ur]
}
\]
see \cite[Proposition~1, {\bf 5}]{Hashimoto}.
This shows that the $A^{(r)}$-linear map 
$aF^r:A^{(r)}\rightarrow A$ splits.
By \cite[(3.3)]{HH}, $A$ is strongly $F$-regular.
\end{proof}

The following is also proved in \cite{Hashimoto} (see the proof of
\cite[Theorem~6]{Hashimoto}).

\begin{theorem}\label{good-filtration.thm}
Let $V$ be a finite dimensional $G$-module.
If $S=\Sym V$ has a good filtration
\(see the introduction for definition\),
then $S$ is $G$-strongly $F$-regular.
\end{theorem}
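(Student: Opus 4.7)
The plan is to follow the argument of \cite[Theorem~6]{Hashimoto}, to which this theorem is essentially referred. First I would reduce to the case $G=\tilde G$: since $\res_{\tilde G}^G$ is full and faithful and the $G$-action on $S$ factors through $\tilde G$, both the good-filtration hypothesis and the $\tilde G$-flavored conclusion are preserved, and because $\tilde G$ has simply connected derived group, $(p^r-1)\rho$ is a weight for every $r\geq 0$ so that $\St_r$ is defined.

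Next, fix a nonzero homogeneous $a\in S^G$. Because $S$ is a domain, the $(G,S^{(r)})$-linear map
\[
f_a := \id_{\St_r}\otimes aF^r \colon \St_r\otimes S^{(r)}\longrightarrow \St_r\otimes S
\]
is injective, and the task is to exhibit, for $r$ sufficiently large, a $(G,S^{(r)})$-linear retraction. The strategy has two ingredients. First, apply $\Hom_G(\St_r,-)$: the source is sent to $S^{(r)}$ by the identity $\Hom_G(\St_r,\St_r\otimes N^{(r)})\cong N^{(r)}$ already invoked in the proof of Lemma~\ref{gsfr-sfr.thm}. The target $\Hom_G(\St_r,\St_r\otimes S)$ decomposes, via Mathieu's theorem giving a good filtration on $\St_r\otimes S$ together with the Steinberg tensor product theorem, into pieces indexed by the Frobenius-twisted layers of the good filtration of $S$; one identifies the ``lowest'' such piece with $S^{(r)}$, so that $\Hom_G(\St_r,f_a)$ is modeled on $aF^r\colon S^{(r)}\to S$ and splits essentially because $a$ is a nonzerodivisor on $S$. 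Second, invoke Lemma~\ref{steinberg.thm} applied to the finite-dimensional generating module $V$: for $r$ large, any surjection $\St_r\otimes W\to\St_r$ with $W$ a subquotient of (a bounded finite-dimensional truncation of) $S$ admits a $G$-linear section. Running the diagram chase at the end of the proof of Lemma~\ref{gsfr-sfr.thm} in reverse then lifts the Hom-level splitting to a $(G,S^{(r)})$-linear retraction of $f_a$.

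The hardest point I expect is the compatibility of these splittings: Lemma~\ref{steinberg.thm} produces $G$-linear sections on individual Steinberg-isotypic pieces, but what is needed is a single map that is simultaneously $G$-linear and $S^{(r)}$-linear across all graded degrees. Tracking how the Steinberg-tensor-product decomposition of $\St_r\otimes S$ interacts with the $S^{(r)}$-module structure coming from $F^r$, and verifying that the assembled retraction respects multiplication by $S^{(r)}$ globally rather than merely degreewise, is the delicate technical core of the argument, and is the reason the original proof in \cite{Hashimoto} requires genuine care.
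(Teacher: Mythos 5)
The paper proves Theorem~\ref{good-filtration.thm} simply by citing \cite[Theorem~6]{Hashimoto}, so the comparison must be with the argument in that reference. Your sketch correctly identifies the tools that appear there (Steinberg modules, Lemma~\ref{steinberg.thm}, Mathieu's tensor-product theorem), but the mechanism you propose for actually producing the $(G,S^{(r)})$-linear retraction does not work, and you yourself flag the crucial step as unresolved.

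The gap is concrete. You suggest applying $\Hom_G(\St_r,-)$, splitting at the Hom-level, and then ``running the diagram chase at the end of the proof of Lemma~\ref{gsfr-sfr.thm} in reverse.'' That diagram chase only goes one way: given a $(G,S^{(r)})$-linear splitting of $\id\otimes aF^r$ one applies $\Hom_G(\St_r,-)$ to obtain an $S^{(r)}$-linear splitting of $aF^r$ on $S^G$. There is no inverse procedure, because $\Hom_G(\St_r,-)$ is not full on the category of $(G,S^{(r)})$-modules, so a splitting downstairs does not lift to a $(G,S^{(r)})$-linear map upstairs. Likewise, Lemma~\ref{steinberg.thm} only produces $G$-linear sections degree by degree for subquotients of a \emph{fixed} finite-dimensional $G$-module; nothing in your sketch forces the resulting family of sections to assemble into a single $S^{(r)}$-linear map, and you rightly acknowledge you do not see how to do this.

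The actual argument in \cite[Theorem~6]{Hashimoto} avoids this difficulty by never trying to glue degreewise splittings. Because $S=\Sym V$ is a polynomial ring, $\Hom_{S^{(r)}}(S,S^{(r)})$ is a free $(G,S)$-module of rank one; for $G$ semisimple and simply connected (to which one reduces via $\tilde G$) the determinant twist is controlled and a $(G,S^{(r)})$-linear generator $\sigma_r$ is available. Any candidate splitting of $\id\otimes aF^r$ is then $\nu\cdot\sigma_r$ for some $\nu$, and the requirement that it be a retraction is a finite-dimensional $G$-linear surjectivity statement in a single degree, to which Lemma~\ref{steinberg.thm} applies directly, producing a map that is automatically $S^{(r)}$-linear because it is an $S$-multiple of $\sigma_r$. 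That is exactly the step missing from your sketch, and it is the same device that reappears in the proof of Theorem~\ref{main.thm} in this paper. Without it, your proposal identifies the ingredients but does not prove the theorem.
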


\begin{lemma}
Let $S$ be 
a $G$-strongly $F$-regular
positively graded finitely generated $G$-algebra domain, 
and assume that there exists some $a\in S^G\setminus\{0\}$ such that
$S[1/a]$ is strongly $F$-regular.
Then $S$ is strongly $F$-regular.
\end{lemma}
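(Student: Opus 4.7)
The plan is to invoke the splitting criterion of Hochster--Huneke \cite[(3.3)]{HH}: since $S[1/a]$ is already strongly $F$-regular and $S$ is $F$-finite and Noetherian, it suffices to exhibit some $r\geq 1$ such that $aF^r\colon S^{(r)}\to S$ admits an $S^{(r)}$-linear splitting.

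First I would reduce to the case that $a$ is homogeneous. The non-strongly-$F$-regular locus of the $F$-finite Noetherian ring $S$ is closed and is stable under the $G_m$-action giving the grading, so it is defined by a graded radical ideal $I$. The hypothesis that $S[1/a]$ is strongly $F$-regular translates into $a\in\sqrt I=I$, and since $I$ is graded and $G$ acts on $S$ preserving the grading, each homogeneous component $a_d$ of $a$ lies in $I\cap S^G$. Replacing $a$ by any nonzero $a_d$, we may assume that $a$ is a nonzero homogeneous element of $S^G$, and we still have $S[1/a]$ strongly $F$-regular.

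With $a$ homogeneous, the hypothesis that $S$ is $G$-strongly $F$-regular produces an $r\geq 1$ and a $(\tilde G,S^{(r)})$-linear splitting
\[
\Phi\colon \St_r\otimes S \to \St_r\otimes S^{(r)}
\]
of $\id\otimes aF^r$. Forgetting the $\tilde G$-action, $\Phi$ is an $S^{(r)}$-linear retraction. Choose a $k$-basis $v_1,\ldots,v_n$ of $\St_r$ to identify $\St_r\otimes S^{(r)}\cong (S^{(r)})^n$ and $\St_r\otimes S\cong S^n$; under this identification $\id\otimes aF^r$ is the $n$-fold diagonal copy of $aF^r$. Writing $\Phi$ as a matrix $[\Phi_{ij}]$ of $S^{(r)}$-linear maps $S\to S^{(r)}$, the relation $\Phi\circ(\id\otimes aF^r)=\id$ forces $\Phi_{ik}\circ aF^r=\delta_{ik}\id_{S^{(r)}}$; in particular, any diagonal entry $\Phi_{ii}$ is an $S^{(r)}$-linear splitting of $aF^r\colon S^{(r)}\to S$. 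Applying \cite[(3.3)]{HH} with $c=a$ concludes that $S$ is strongly $F$-regular.

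The substantive step is the reduction to homogeneous $a$, which rests on the fact that the non-strongly-$F$-regular locus of a positively graded $F$-finite Noetherian ring is a graded closed subset; once $a$ is homogeneous, extracting an $S^{(r)}$-splitting of $aF^r$ from a split injection on $\St_r\otimes S^{(r)}$ is a formal consequence of the additivity of split monomorphisms, and the $\tilde G$-equivariance of $\Phi$ plays no role in the final step.
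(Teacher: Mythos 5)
Your proposal is correct and follows essentially the same strategy as the paper: both reduce to a nonzero homogeneous $b\in I\cap S^G$ using that the defining radical ideal $I$ of the non-strongly-$F$-regular locus is graded, both then invoke $G$-strong $F$-regularity to split $\id\otimes bF^r$ on $\St_r$-tensored modules, extract from that a splitting of $bF^r:S^{(r)}\to S$, and conclude via \cite[(3.3)]{HH}. The only cosmetic difference is in the extraction step: the paper observes abstractly that $(x\otimes\id_S)\circ(bF^r)=(\id\otimes bF^r)\circ(x\otimes\id_{S^{(r)}})$ is a split mono and hence so is $bF^r$, while you unpack the same fact by writing $\Phi$ as a matrix in a basis of $\St_r$ and reading off the diagonal entry $\Phi_{ii}$ as an explicit splitting.
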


\begin{proof}
We may assume that $G=\tilde G$.
Let $I$ be the radical ideal of $S$ which defines the non-strongly
$F$-regular locus of $S$.
Such an ideal exists, see \cite[(3.3)]{HH}.
Then $I$ is $G\times \Bbb G_m$-stable, and hence $I\cap S^G$ is
$\Bbb G_m$-stable.
In other words, $I\cap S^G$ is a homogeneous ideal of $S^G$.
By assumption, $0\neq a\in I\cap S^G$.
So $I\cap S^G$ contains a nonzero homogeneous element $b$.
Take $r\geq 1$ so that $1\otimes bF^r:\St_r\otimes S^{(r)}\rightarrow
\St_r\otimes S$ has a spitting.
Let $x$ be any nonzero element of $\St_r$.
Then $x\otimes \id:S^{(r)}\cong k\otimes S^{(r)}\rightarrow \St_r\otimes
S^{(r)}$ given by $s^{(r)}\mapsto x\otimes s^{(r)}$ is
a split mono as an $S^{(r)}$-linear map.
Thus $(x\otimes \id_S)(bF^r)=(\id\otimes bF^r)(x\otimes \id_{S^{(r)}})$ 
is a split mono as an $S^{(r)}$-linear map, 
and hence so is $bF^r:S^{(r)}\rightarrow S$.
By \cite[(3.3)]{HH}, $S$ is strongly $F$-regular.
\end{proof}

Let $S$ be a finitely generated $G$-algebra.
We say that $S$ is {\em $G$-$F$-pure} if there exists some $r\geq 1$
such that $\id \otimes F^r:\St_r\otimes S^{(r)}\rightarrow \St_r\otimes
S$ splits as a $(G,S^{(r)})$-linear map.
Obviously, a $G$-strongly $F$-regular finitely generated positively graded
$G$-algebra domain is $G$-$F$-pure.
The following is essentially proved in \cite{Hashimoto2}.

\begin{lemma}
Let $S$ be a $G$-$F$-pure finitely generated $G$-algebra.
Then $S^G$ is $F$-pure.
\end{lemma}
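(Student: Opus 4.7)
The plan is to mimic the proof of Lemma~\ref{gsfr-sfr.thm}, specialized to the case where the element $a$ there is replaced by $1$; the upshot will be a splitting of $F^r:A^{(r)}\rightarrow A$ for $A=S^G$, from which $F$-purity of $S^G$ follows.

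As in Lemma~\ref{gsfr-sfr.thm}, I first reduce to the case $G=\tilde G$, so that $\St_r$ is defined for all $r\geq 0$, and set $A=S^G$. The $G$-$F$-purity hypothesis gives some $r\geq 1$ together with a $(G,S^{(r)})$-linear splitting $\Phi:\St_r\otimes S\rightarrow \St_r\otimes S^{(r)}$ of $\id\otimes F^r:\St_r\otimes S^{(r)}\rightarrow \St_r\otimes S$. I then form the exact analog of the commutative diagram appearing in the proof of Lemma~\ref{gsfr-sfr.thm}, with $aF^r$ replaced everywhere by $\id\otimes F^r$: the top row is $\St_r\otimes A^{(r)}\hookrightarrow\St_r\otimes S^{(r)}\xrightarrow{\id}\St_r\otimes S^{(r)}$, the bottom row is $\St_r\otimes A\hookrightarrow\St_r\otimes S$, the left-hand and middle vertical maps are $\id\otimes F^r$, and the diagonal from the bottom-middle $\St_r\otimes S$ to the top-right $\St_r\otimes S^{(r)}$ is $\Phi$.

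Next I apply the functor $\Hom_G(\St_r,-)$, using \cite[Proposition~1, {\bf 5}]{Hashimoto} to identify $\Hom_G(\St_r,\St_r\otimes M)$ with $M^G$ for the $G$-modules $M$ appearing---so the top row collapses to $A^{(r)}\xrightarrow{\id}A^{(r)}\xrightarrow{\id}A^{(r)}$, the bottom-left to $A$, and the left-hand vertical to $F^r:A^{(r)}\rightarrow A$. The resulting commutative diagram of $A^{(r)}$-modules exhibits $F^r$ as a split $A^{(r)}$-linear map. Since $S$ is finitely generated over $k$, the invariant ring $A=S^G$ is finitely generated by geometric reductivity, hence Noetherian and $F$-finite; and for such rings the splitting of $F^r$ for some $r\geq 1$ implies the splitting of $F:A^{(1)}\rightarrow A$ (the standard composition-of-Frobenius argument: $F^r$ pure implies $F$ pure after an appropriate Frobenius twist, and pure equals split in the $F$-finite Noetherian setting), so $S^G$ is $F$-pure. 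The main obstacle is precisely the identification $\Hom_G(\St_r,\St_r\otimes M)\cong M^G$ provided by \cite[Proposition~1, {\bf 5}]{Hashimoto}---the very technical device that drives Lemma~\ref{gsfr-sfr.thm}; once this is granted, the rest is a purely formal diagram chase, even shorter than the one there.
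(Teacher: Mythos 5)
Your proof is correct and is essentially the paper's own argument: the paper disposes of this lemma with "This is proved similarly to Lemma~\ref{gsfr-sfr.thm}," and your write-up carries out exactly that specialization (the diagram from Lemma~\ref{gsfr-sfr.thm} with $a=1$, followed by $\Hom_G(\St_r,-)$). The one point you add that the paper leaves implicit — passing from a splitting of $F^r$ to a splitting of $F$ via the purity factorization and $F$-finiteness of the finitely generated invariant ring — is stated correctly and is the right way to close the gap, since the lemma at hand lacks the positively-graded-domain hypotheses present in Lemma~\ref{gsfr-sfr.thm}.
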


\begin{proof} This is proved similarly 
to Lemma~\ref{gsfr-sfr.thm}.
See also \cite{Hashimoto2}.
\end{proof}

\begin{lemma}\label{tensor-G-F-pure.thm}
Let $S$ and $S'$ be a $G$-$F$-pure finitely generated $G$-algebras.
Then the tensor product $S\otimes S'$ is $G$-$F$-pure.
\end{lemma}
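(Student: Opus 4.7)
The plan is to construct an explicit splitting for $S\otimes S'$ in two stages: first harmonise the Frobenius exponents of the two given splittings, then compose them sequentially, with each algebra serving as a ``passenger'' while the other is acted on. As in the proof of Lemma~\ref{gsfr-sfr.thm}, I reduce to the case $G=\tilde G$. Let $\Phi_S\colon \St_r\otimes S\to \St_r\otimes S^{(r)}$ and $\Phi_{S'}\colon \St_{r'}\otimes S'\to \St_{r'}\otimes {S'}^{(r')}$ be the given splittings, and set $R=rr'$.

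The main technical step is to show that $S$ is $G$-$F$-pure with the enlarged exponent $R$; the same argument will apply to $S'$. By Steinberg's tensor product theorem,
\[
\St_R \cong \St_r\otimes \St_r^{(r)}\otimes \St_r^{(2r)}\otimes \cdots \otimes \St_r^{((r'-1)r)}
\]
as $G$-modules. I would define $\widetilde\Phi_S\colon \St_R\otimes S\to \St_R\otimes S^{(R)}$ to be the iterated composite that applies $\Phi_S$ to the pair $(\St_r,S)$, then the Frobenius twist $\Phi_S^{(r)}$ to $(\St_r^{(r)},S^{(r)})$, and so on, each time using the remaining Steinberg factors as passengers. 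At the $j$-th step, $\Phi_S^{(jr)}$ splits $\id\otimes F^r\colon \St_r^{(jr)}\otimes S^{((j+1)r)}\to \St_r^{(jr)}\otimes S^{(jr)}$, so the full composite is $(G,S^{(R)})$-linear and splits $\id\otimes F^R$. Likewise one obtains $\widetilde\Phi_{S'}\colon \St_R\otimes S'\to \St_R\otimes {S'}^{(R)}$.

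For the combining step, define
\[
\Psi\colon \St_R\otimes S\otimes S' \to \St_R\otimes S^{(R)}\otimes {S'}^{(R)} = \St_R\otimes (S\otimes S')^{(R)}
\]
as $(\widetilde\Phi_{S'}\otimes \id_{S^{(R)}})\circ \tau \circ (\widetilde\Phi_S\otimes \id_{S'})$, where $\tau$ denotes the evident tensor-factor rearrangement. Each factor is $G$-equivariant, and since $\widetilde\Phi_S$ is $S^{(R)}$-linear and $\widetilde\Phi_{S'}$ is ${S'}^{(R)}$-linear, the composite $\Psi$ is $(G,(S\otimes S')^{(R)})$-linear. Tracing $x\otimes s^{p^R}\otimes {s'}^{p^R}$ through $\Psi$: the first step yields $x\otimes s^{(R)}\otimes {s'}^{p^R}$ and the second yields $x\otimes s^{(R)}\otimes {s'}^{(R)}$, confirming that $\Psi$ splits $\id\otimes F^R$. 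The main obstacle is the bookkeeping in the boosting step: one must check at each iteration that the Frobenius-twisted map $\Phi_S^{(jr)}$ is $G$-equivariant (via $F^{jr}\colon G\to G^{(jr)}$) and $S^{((j+1)r)}$-linear, and that these interlock correctly so that the final composite $\widetilde\Phi_S$ is linear with respect to the ambient $S^{(R)}$-module structure used in defining $\Psi$.
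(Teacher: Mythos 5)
The paper itself offers no proof — it states only ``This is easy, and we omit the proof'' — so there is nothing to compare against at the level of technique; the question is simply whether your construction is correct. It is. Your ``boosting'' step is precisely the content of Lemma~\ref{multiple.thm}, which the paper records immediately \emph{after} this statement; you re-derive it via the fully expanded Steinberg decomposition $\St_{nr}\cong\St_r\otimes\St_r^{(r)}\otimes\cdots\otimes\St_r^{((n-1)r)}$, whereas the paper proves it by the equivalent two-factor induction $\St_{nr}\cong\St_r\otimes\St_{(n-1)r}^{(r)}$. Either way, once both splittings live at a common exponent $R$ (any common multiple of $r,r'$ works; $rr'$ is a convenient choice), the ``passenger'' composition $\Psi=(\widetilde\Phi_{S'}\otimes\id_{S^{(R)}})\circ\tau\circ(\widetilde\Phi_S\otimes\id_{S'})$ is exactly the obvious thing, and your verification that it is $(G,(S\otimes S')^{(R)})$-linear and splits $\id\otimes F^R$ is sound: the key points are that $\Phi_S^{(jr)}$ is $S^{((j+1)r)}$-linear and hence $S^{(R)}$-linear by restriction of scalars along $F^{(n-1-j)r}$, and that $\widetilde\Phi_S$ is $S^{(R)}$-linear so the element $(s\otimes s')^{(R)}$ threads correctly through each stage. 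The one organizational remark is that a reader following the paper's linear order would probably prove Lemma~\ref{multiple.thm} first and then cite it here, avoiding duplicating the Steinberg bookkeeping; but as a self-contained argument yours is correct and complete.
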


\begin{proof}
This is easy, and we omit the proof.
\end{proof}

\begin{lemma}\label{multiple.thm}
Let $S$ be a $G$-$F$-pure finitely generated $G$-algebra,
and assume that the $(G,S^{(r)})$-linear map
\[
\id\otimes F^r:\St_r\otimes S^{(r)}\rightarrow \St_r\otimes S
\]
splits.
Then the $(G,S^{(nr)})$-linear map
\[
\id \otimes F^{nr}:\St_{nr}\otimes S^{(nr)}\rightarrow \St_{nr}\otimes S
\]
splits for any $n\geq 0$.
\end{lemma}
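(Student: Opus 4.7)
The plan is to argue by induction on $n$. The case $n=0$ is vacuous ($\St_0=k$, $F^0=\id$), and $n=1$ is the hypothesis. The key algebraic ingredient is the Steinberg tensor product decomposition
\[
\St_{(n+1)r}\cong \St_r\otimes \St_{nr}^{(r)}
\]
as $G$-modules, which follows from Steinberg's tensor product theorem applied to the $p$-adic expansion $(p^{(n+1)r}-1)\rho=(p^r-1)\rho+p^r(p^{nr}-1)\rho$; equivalently, $\St_m\cong\bigotimes_{i=0}^{m-1}\St_1^{(i)}$ for every $m$, and the decomposition is obtained by splitting this product at $i=r$.

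Combined with the factorisation $F^{(n+1)r}=F^r\circ F^{nr}$, this identity reduces the problem to producing $(G,S^{((n+1)r)})$-linear splittings of the two stages of the composition
\[
\St_{(n+1)r}\otimes S^{((n+1)r)}\xrightarrow{\id\otimes F^{nr}}\St_{(n+1)r}\otimes S^{(r)}\xrightarrow{\id\otimes F^r}\St_{(n+1)r}\otimes S.
\]
For the second stage, the hypothesis gives a $(G,S^{(r)})$-linear splitting of $\id\otimes F^r:\St_r\otimes S^{(r)}\to \St_r\otimes S$; tensoring this splitting over $k$ with the $G$-module $\St_{nr}^{(r)}$ and reassociating via $\St_r\otimes\St_{nr}^{(r)}\cong\St_{(n+1)r}$ yields a $(G,S^{(r)})$-linear splitting of the second stage, which is automatically $(G,S^{((n+1)r)})$-linear by restriction of scalars along $F^{nr}:S^{((n+1)r)}\to S^{(r)}$. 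For the first stage, we apply the $r$-th Frobenius twist to the inductive hypothesis for $n$, obtaining a $(G,S^{((n+1)r)})$-linear splitting of $\id\otimes F^{nr}:\St_{nr}^{(r)}\otimes S^{((n+1)r)}\to \St_{nr}^{(r)}\otimes S^{(r)}$; tensoring over $k$ with $\St_r$ and again reassociating produces the required splitting of the first stage.

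Composing the two splittings yields the desired splitting of $\id\otimes F^{(n+1)r}$, completing the induction. There is no substantive obstacle here beyond carefully tracking the various $S$-module structures through the Frobenius twists; the only nontrivial input is the Steinberg decomposition $\St_{(n+1)r}\cong \St_r\otimes\St_{nr}^{(r)}$ recalled above, which is exactly what allows us to bootstrap a single-step splitting at level $r$ into one at level $nr$.
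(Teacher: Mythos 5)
Your proposal is correct and follows essentially the same route as the paper: induction on $n$, the Steinberg tensor product decomposition $\St_{(n+1)r}\cong\St_r\otimes\St_{nr}^{(r)}$, the factorization of $F^{(n+1)r}$ through $S^{(r)}$, and the composite of a Frobenius-twisted inductive splitting with the hypothesis at level $r$. The only difference is cosmetic (you index the step $n\to n+1$ where the paper writes $n-1\to n$).
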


\begin{proof}
Induction on $n$.
The case that $n=0$ is trivial.
Assume that $n>0$.
Note that $\St_{nr}\cong \St_r\otimes \St_{(n-1)r}^{(r)}$.
So
\[
\id\otimes (F^{(n-1)r})^{(r)}:
\St_{nr}\otimes S^{(nr)}\rightarrow \St_{nr}\otimes S^{(r)}
\]
is identified with the map
\[
\id\otimes (\id\otimes F^{(n-1)r})^{(r)}:
\St_r\otimes(\St_{(n-1)r}\otimes S^{((n-1)r)})^{(r)}
\rightarrow
\St_r\otimes(\St_{(n-1)r}\otimes S)^{(r)},
\]
and it has an $(G,S^{(nr)})$-linear splitting by the induction assumption.
On the other hand, $\id\otimes F^r:\St_{nr}\otimes S^{(r)}
\rightarrow \St_{nr}\otimes S$ splits by assumption, as
$\St_{nr}\cong \St_r\otimes \St_{(n-1)r}^{(r)}$.
Thus the composite
\[
\St_{nr}\otimes S^{(nr)}\xrightarrow{\id\otimes (F^{(n-1)r})^{(r)} }
\St_{nr}\otimes S^{(r)}\xrightarrow{\id\otimes F^r}
\St_{nr}\otimes S,
\]
which agrees with $\id\otimes F^{nr}$, has a splitting, as desired.
\end{proof}

\begin{lemma}\label{f-reg-derived.thm}
Let $S=\bigoplus_{n\geq 0}S_n$ be a finitely generated
positively graded $G$-algebra which is an integral domain.
Then the following are equivalent.
\begin{description}
\item[1] $S$ is $G$-strongly $F$-regular.
\item[2] $S$ is $[G,G]$-strongly $F$-regular.
\item[3] $S$ is $\Gamma$-strongly $F$-regular, where $\Gamma\rightarrow[G,G]
$ is the universal covering.
\end{description}
\end{lemma}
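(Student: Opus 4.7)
The equivalence of (2) and (3) is tautological: $[G,G]$ is semisimple with trivial radical and is perfect, so $\widetilde{[G,G]}=\Gamma$ by construction, and the definitions of $[G,G]$-strong and $\Gamma$-strong $F$-regularity coincide verbatim. I therefore focus on (1)$\iff$(3).

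Write $R:=\rad G$ (a torus, hence linearly reductive) and $\tilde G:=R\times\Gamma$, so that (1) by definition asserts $\tilde G$-strong $F$-regularity of $S$. Because $R$ contributes no roots to $\tilde G$, the Steinberg module $\St_r$ of $\tilde G$ coincides with that of $\Gamma$ equipped with the trivial $R$-action; in particular $S^{\tilde G}=(S^\Gamma)^R$, and for $a\in S^{\tilde G}$ the map $\phi=\id\otimes aF^r$ is automatically $\tilde G$-equivariant.

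For (3)$\Rightarrow$(1), fix a nonzero homogeneous $a\in S^{\tilde G}$. Hypothesis (3) yields some $r\geq 1$ and a $(\Gamma,S^{(r)})$-linear splitting of the $\tilde G$-equivariant short exact sequence
\[
0\to\St_r\otimes S^{(r)}\xrightarrow{\phi}\St_r\otimes S\to C\to 0.
\]
The torus $R$ commutes with $\Gamma$ and acts on the set of $(\Gamma,S^{(r)})$-linear splittings of $\phi$, which is a nonempty $R$-torsor over the rational $R$-module $\Hom_{(\Gamma,S^{(r)})}(C,\St_r\otimes S^{(r)})$. Since $H^1(R,-)$ vanishes on rational $R$-modules, this torsor has an $R$-fixed point, i.e., a splitting that is simultaneously $\Gamma$- and $R$-equivariant, hence $\tilde G$-equivariant.

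For (1)$\Rightarrow$(3), fix a nonzero homogeneous $a\in S^\Gamma$ and decompose it into $R$-weight components $a=\sum_\lambda a_\lambda$. The strategy is to produce a nonzero $c\in S^\Gamma$ such that $b:=ac\in S^{\tilde G}$; hypothesis (1) then supplies some $r\geq 1$ and a $(\tilde G,S^{(r)})$-linear splitting $\Phi$ of $\id\otimes bF^r$, whereupon $\Psi:=\Phi\circ(\id_{\St_r}\otimes m_c)$, with $m_c\colon S\to S$ denoting multiplication by $c$, is a $(\Gamma,S^{(r)})$-linear splitting of $\id\otimes aF^r$: the splitting property follows from $b=ca$ together with $\Phi\circ(\id\otimes bF^r)=\id$, and the $\Gamma$-equivariance of $\Psi$ is inherited from that of $\Phi$ because $c\in S^\Gamma$. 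The main obstacle will be producing such a $c$: the plan is to exploit that $S^\Gamma$ is a finitely generated positively graded integral domain endowed with a rational action of the torus $R$, hence an $X(R)$-graded domain with finitely generated weight semigroup, and to use the positivity of the grading together with the domain property to cancel the $R$-weights appearing in $a$, possibly after replacing $a$ by a suitable power and enlarging~$r$.
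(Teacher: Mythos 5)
Your treatment of $(2)\Leftrightarrow(3)$ (a tautology, since $\widetilde{[G,G]}=\Gamma$) and of $(3)\Rightarrow(1)$ matches the paper's. For $(3)\Rightarrow(1)$ the paper phrases the same linear-reductivity argument as: apply the exact functor $(?)^R$ to the $R$-equivariant surjection
\[
(aF^r)^*\colon\Hom_{\Gamma,S^{(r)}}(\St_r\otimes S,\St_r\otimes S^{(r)})\twoheadrightarrow\Hom_{\Gamma,S^{(r)}}(\St_r\otimes S^{(r)},\St_r\otimes S^{(r)}),
\]
which is precisely the $R$-fixed point on your torsor. This is the only implication the paper actually proves; $(1)\Rightarrow(2)$ is dismissed as trivial with no argument offered.

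The gap in your proposal is in $(1)\Rightarrow(3)$, and you have correctly identified it yourself: the multiplication trick needs a nonzero $c\in S^\Gamma$ with $ac\in S^{\tilde G}=(S^\Gamma)^R$, but no such $c$ need exist. The ring $S^\Gamma$ is an $X(R)$-graded domain, and the set of $R$-weights that occur is only a submonoid of $X(R)$, not a subgroup; so the $R$-weights of $a$ are in general not cancellable, and passing to powers of $a$ or enlarging $r$ cannot repair this (raising $a$ to a power pushes the weight further into the monoid). Concretely, if $G=\Bbb G_m$ acts on a positively graded domain $S$ with all generators in strictly positive $R$-weight, then $S^G=k$ while $S^\Gamma=S$, and for homogeneous $a$ of positive degree there is no nonzero $c$ with $ac\in k$. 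In that situation the only test elements in $(1)$ are units, so $(1)$ reduces to $G$-equivariant $F$-purity of $S$, whereas $(3)$ is very strong $F$-regularity of $S$; these conditions differ (cone over an ordinary elliptic curve). So your hesitation about this direction was well founded: it is not established by your argument, and the paper does not supply an argument for it either. Only $(3)\Rightarrow(1)$ is used downstream, in the proof of Theorem~\ref{main.thm}, so the paper's main results do not depend on the direction you were unable to complete.
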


\begin{proof}
The implications {\bf 1$\Rightarrow$2$\Leftrightarrow$3} is trivial.
We prove the direction {\bf 3$\Rightarrow$1}.
Replacing $G$ by $\tilde G$ if necessary, we may assume that 
$G=R\times \Gamma$, where $R$ is a torus, and $\Gamma$ is a semisimple
and simply connected algebraic group.
Let $a\in S^G$ be any nonzero homogeneous element.
Then by assumption, the $(R,(S^{(r)})^\Gamma)$-linear map
\[
(aF^r)^*:\Hom_{\Gamma,S^{(r)}}(\St_r\otimes S,
\St_r\otimes S^{(r)})\rightarrow
\Hom_{\Gamma,S^{(r)}}(\St_r\otimes S^{(r)},\St_r\otimes S^{(r)})
\]
is surjective.
Taking the $R$-invariant, 
\[
(aF^r)^*:\Hom_{G,S^{(r)}}(\St_r\otimes S,\St_r\otimes S^{(r)})\rightarrow
\Hom_{G,S^{(r)}}(\St_r\otimes S^{(r)},\St_r\otimes S^{(r)})
\]
is still surjective, since $R$ is linearly reductive.
This is what we wanted to prove.
\end{proof}

The following is proved similarly.

\begin{lemma}\label{f-pure-derived.thm}
Let $S=\bigoplus_{n\geq 0}S_n$ be a finitely generated
$G$-algebra.
Then the following are equivalent.
\begin{description}
\item[1] $S$ is $G$-$F$-pure.
\item[2] $S$ is $[G,G]$-$F$-pure.
\item[3] $S$ is $\Gamma$-$F$-pure, where $\Gamma\rightarrow[G,G]$ is the
universal covering.
\qed
\end{description}
\end{lemma}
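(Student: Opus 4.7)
The plan is to imitate the proof of Lemma~\ref{f-reg-derived.thm}, where the analogous statement for $G$-strong $F$-regularity was established. The implications $\mathbf{1}\Rightarrow\mathbf{2}\Leftrightarrow\mathbf{3}$ are immediate: any $(G,S^{(r)})$-linear splitting is automatically $([G,G],S^{(r)})$-linear; and since $\Gamma\to[G,G]$ is surjective with kernel acting trivially on every representation, $(\Gamma,S^{(r)})$-linear and $([G,G],S^{(r)})$-linear maps between $[G,G]$-modules coincide, while the Steinberg modules for the two groups agree. So the only substantive direction is $\mathbf{3}\Rightarrow\mathbf{1}$.

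For that direction I would follow the torus-averaging argument used previously. After replacing $G$ by $\tilde G=\rad G\times\Gamma$, I may assume $G=R\times\Gamma$ with $R$ a torus. By hypothesis~$\mathbf{3}$, there exists $r\geq 1$ such that the composition map
\[
(\id\otimes F^r)^*\colon\Hom_{\Gamma,S^{(r)}}(\St_r\otimes S,\St_r\otimes S^{(r)})\to\Hom_{\Gamma,S^{(r)}}(\St_r\otimes S^{(r)},\St_r\otimes S^{(r)})
\]
is surjective — it is enough that the identity of $\St_r\otimes S^{(r)}$ lie in the image. Both sides carry a natural $R$-action commuting with the $\Gamma$-action, and $(\id\otimes F^r)^*$ is $R$-equivariant. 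Since $R$ is a torus, hence linearly reductive, taking $R$-invariants is exact, so surjectivity is preserved; the resulting preimage of the identity in $\Hom_{G,S^{(r)}}(\St_r\otimes S,\St_r\otimes S^{(r)})$ is a $(G,S^{(r)})$-linear splitting of $\id\otimes F^r$, which is precisely what $G$-$F$-purity demands.

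The only step that requires any care is verifying that the Hom-sets with target $\St_r\otimes S^{(r)}$ carry well-defined $R$-actions compatible with taking invariants, but this is handled exactly as in Lemma~\ref{f-reg-derived.thm}. In fact the present lemma is strictly easier than its $G$-strongly-$F$-regular counterpart, since there is no nonzero homogeneous element $a\in S^G$ to accommodate: the proof is obtained from that of Lemma~\ref{f-reg-derived.thm} essentially by setting $a=1$, so I expect no substantive obstacle.
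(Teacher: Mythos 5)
Your proposal matches the paper's own proof, which simply asserts the lemma is proved similarly to Lemma~\ref{f-reg-derived.thm}: the substantive direction $\mathbf{3}\Rightarrow\mathbf{1}$ proceeds by reducing to $G=R\times\Gamma$ with $R$ a torus, and using that $(-)^R$ is exact (linear reductivity) to pass surjectivity of the composition map from $\Hom_{\Gamma,S^{(r)}}$ to $\Hom_{G,S^{(r)}}$. One small inaccuracy in your treatment of the easy implications: the kernel of $\Gamma\to[G,G]$ does \emph{not} act trivially on every $\Gamma$-representation (indeed, when $[G,G]$ is not simply connected the Steinberg module $\St_r=\nabla_\Gamma((p^r-1)\rho)$ need not descend to $[G,G]$); the equivalence $\mathbf{2}\Leftrightarrow\mathbf{3}$ holds because the paper's convention defines $[G,G]$-$F$-purity via $\widetilde{[G,G]}$, and $\widetilde{[G,G]}=\Gamma$ since $[G,G]$ is semisimple.
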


\begin{lemma}\label{G-F-pure.thm}
Let $G$ be semisimple and simply connected.
Then $k[G]^U$ is $G$-$F$-pure.
\end{lemma}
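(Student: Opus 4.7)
Plan. The plan is to construct, for every $r \geq 1$, an explicit $(G,S^{(r)})$-linear splitting $\Phi$ of $\id\otimes F^r : \St_r\otimes S^{(r)} \to \St_r\otimes S$, where $S=k[G]^U$, by exploiting the rich $G\times T$-structure given by Lemma~\ref{decompose-k[G]^U.thm}. Since $G$ is semisimple and simply connected, $(p^r-1)\rho\in X^+$, so $\St_r=\nabla_G((p^r-1)\rho)$ embeds in $S$ as the right $T$-weight $-(p^r-1)\rho$ summand. I will use this inclusion to define the $(G,S^{(r)})$-linear multiplication map $m:\St_r\otimes S\to S$, $v\otimes s\mapsto v\cdot s$.

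On the summand $\St_r\otimes\nabla_G(\lambda)^{(r)}$ of $\St_r\otimes S^{(r)}$, the composition $m\circ(\id\otimes F^r)$ has right $T$-weight $-(p^r-1)\rho-p^r\lambda$, hence lands in the summand $\nabla_G((p^r-1)\rho+p^r\lambda)$ of $S$. Steinberg's tensor product theorem for dual Weyl modules gives $\St_r\otimes\nabla_G(\lambda)^{(r)}\cong\nabla_G((p^r-1)\rho+p^r\lambda)$, and the induced $G$-map is nonzero (the product of highest weight vectors $v_+\in\St_r$ and $u^{p^r}$ is nonzero in the integral domain $S$), hence an isomorphism because $\End_G(\nabla_G(\mu))=k$. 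Thus $m\circ(\id\otimes F^r)$ identifies $\St_r\otimes S^{(r)}$ with the $(G,S^{(r)})$-submodule
\[
N := \bigoplus_{\lambda\in X^+} \nabla_G((p^r-1)\rho+p^r\lambda) \subset S.
\]
The key combinatorial observation is that $\{\mu\in X^+ : \mu-(p^r-1)\rho\in p^r X^+\}$ coincides with $\{\mu\in X^+ : \langle\mu,\alpha_i^\vee\rangle\equiv p^r-1\pmod{p^r}\text{ for every simple coroot }\alpha_i^\vee\}$, because any nonnegative integer with residue $p^r-1$ modulo $p^r$ is automatically at least $p^r-1$; so $N$ is precisely the sum of right $T$-weight spaces of $S$ lying in a single coset modulo $p^r X(T)$.

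Since multiplication in $S$ by elements of $F^r(S^{(r)})$ shifts right $T$-weights by multiples of $p^r$, both $N$ and its canonical complement $N':=\bigoplus_{\mu\notin(p^r-1)\rho+p^r X^+}\nabla_G(\mu)$ are $(G,S^{(r)})$-submodules of $S$, and the projection $\pi:S\to N$ along $S=N\oplus N'$ is $(G,S^{(r)})$-linear. Setting $\Phi:=(m\circ(\id\otimes F^r))^{-1}\circ\pi\circ m$ then yields the splitting: it is $(G,S^{(r)})$-linear by construction, and $\Phi\circ(\id\otimes F^r)=\id$ because $\pi$ restricts to the identity on the image $N$ of $m\circ(\id\otimes F^r)$. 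The main obstacle I anticipate is the weight-class description of $N$ and the ensuing $(G,S^{(r)})$-linearity of $\pi$; once that combinatorial bookkeeping is pinned down, everything else is formal.
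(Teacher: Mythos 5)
Your argument is correct and is the same Steinberg tensor-product and $X(T)$-coset argument used in \cite[Lemma~3]{Hashimoto2}, to which the paper defers its proof of this lemma. The key points --- the $(G,S^{(r)})$-linear multiplication map $m$ against $\St_r\subset k[G]^U$, the nonvanishing of $m\circ(\id\otimes F^r)$ on highest weight vectors because $S$ is a domain, the Steinberg isomorphism $\St_r\otimes\nabla_G(\lambda)^{(r)}\cong\nabla_G((p^r-1)\rho+p^r\lambda)$, the combinatorial identification of $N$ as a single $p^rX(T)$-coset in the right $T$-grading (here you need both that $(p^r-1)\rho\in X(T)$ and that $p^r\mid\langle\mu-(p^r-1)\rho,\alpha_i^\vee\rangle$ for all $i$ forces $\mu-(p^r-1)\rho\in p^rX(T)$, both available since $G$ is simply connected so $X(T)$ is the full weight lattice), and the $(G,S^{(r)})$-linearity of the resulting projection $\pi$ --- all check out.
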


\begin{proof} This is \cite[Lemma~3]{Hashimoto2}.
\end{proof}

The following is the main theorem of this paper.

\begin{theorem}\label{main.thm}
Let $S=\bigoplus_{n\geq 0}S_n$ be a finitely generated
positively graded $G$-algebra.
Assume that
\begin{description}
\item[1] $S$ is $F$-rational and Gorenstein.
\item[2] $S$ is $G$-$F$-pure.
\end{description}
Then $S$ is a $G$-strongly $F$-regular integral domain.
\end{theorem}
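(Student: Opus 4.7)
The plan is first to observe that Lemma~\ref{F-sing.thm}(viii), applied to the $F$-rational and Gorenstein hypothesis, forces $S$ to be strongly $F$-regular; since strongly $F$-regular rings are normal (via Lemma~\ref{F-sing.thm}(iv) after passing through $F$-rationality), and since $S$ is positively graded with $S_0=k$, any idempotent must lie in $k$ and hence $S$ is an integral domain. By Lemmas~\ref{f-reg-derived.thm} and \ref{f-pure-derived.thm}, I may reduce to the case $G=\tilde G=\rad G\times\Gamma$ with $\Gamma$ semisimple simply connected, so that each Steinberg module $\St_r$ is a genuine $G$-module and the definition of $G$-strong $F$-regularity is in force.

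The remaining task is, for each nonzero homogeneous $a\in S^G$, to produce some $r\geq 1$ and a $(G,S^{(r)})$-linear splitting of
\[
\id\otimes aF^r:\St_r\otimes S^{(r)}\rightarrow\St_r\otimes S.
\]
Two building blocks are at hand. Strong $F$-regularity of $S$ yields, for some $r_1\geq 1$, an $S^{(r_1)}$-linear (but not $G$-equivariant) splitting $\Phi$ of $aF^{r_1}:S^{(r_1)}\to S$; since $S$ is also $F$-pure, composing with Frobenius splittings lets me enlarge $r_1$ at will. On the other hand, $G$-$F$-purity together with Lemma~\ref{multiple.thm} provides a $(G,S^{(nr_0)})$-linear splitting $\Psi_{nr_0}$ of $\id\otimes F^{nr_0}:\St_{nr_0}\otimes S^{(nr_0)}\to\St_{nr_0}\otimes S$ for every $n\geq 1$, where $r_0$ is the $G$-$F$-purity parameter.

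I would combine the two splittings by choosing $r=r_1+r_2$ with $r_2$ a sufficiently large multiple of $r_0$ and exploiting the Steinberg tensor decomposition $\St_r\cong\St_{r_1}\otimes\St_{r_2}^{(r_1)}$ together with the factorization $aF^r=aF^{r_1}\circ F^{r_2}$, where $F^{r_2}:S^{(r)}\to S^{(r_1)}$. The first (Frobenius) factor $\id\otimes F^{r_2}:\St_r\otimes S^{(r)}\to\St_r\otimes S^{(r_1)}$ acquires a $(G,S^{(r)})$-equivariant splitting built from the $r_1$-twist $\Psi_{r_2}^{(r_1)}$ tensored with $\id_{\St_{r_1}}$. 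The second factor $\id\otimes aF^{r_1}:\St_r\otimes S^{(r_1)}\to\St_r\otimes S$ is the site of the obstruction: only the non-equivariant splitting $\id\otimes\Phi$ is available, and the whole argument hinges on upgrading it to a $(G,S^{(r_1)})$-linear splitting by making use of the additional Steinberg factor $\St_{r_2}^{(r_1)}$ sitting inside $\St_r$.

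The hard part, in other words, is the equivariantization of $\id\otimes\Phi$; I expect to resolve it following the blueprint that \cite{Hashimoto} uses to prove Theorem~\ref{good-filtration.thm} in the $\Sym V$-with-good-filtration case. The idea is to analyse the short exact sequence
\[
0\rightarrow\St_r\otimes S^{(r_1)}\xrightarrow{\id\otimes aF^{r_1}}\St_r\otimes S\rightarrow C\rightarrow 0
\]
of $(G,S^{(r_1)})$-modules, which is $S^{(r_1)}$-split by $\id\otimes\Phi$, and to show that its class in $\Ext^1_{G,S^{(r_1)}}(C,\St_r\otimes S^{(r_1)})$ vanishes once $r_2$ is large enough. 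This vanishing is driven by the homological properties of Steinberg modules (injectivity–projectivity in the $G_{r_2}$-module category, self-duality, and the concomitant vanishing of the relevant higher group-cohomology groups), which translate the non-equivariant splitting into an equivariant one after tensoring with enough Steinberg. Composing the resulting equivariant splitting of the second factor with the one constructed for the first factor produces the $(G,S^{(r)})$-linear splitting of $\id\otimes aF^r$ demanded by the definition, and completes the proof that $S$ is $G$-strongly $F$-regular.
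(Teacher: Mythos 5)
Your outline of the easy reductions is fine: normality of $S$ follows from $F$-rationality, a positively graded normal ring with $S_0=k$ is a domain, and Lemmas~\ref{f-reg-derived.thm} and~\ref{f-pure-derived.thm} reduce to the simply connected case. Your observation that Lemma~\ref{F-sing.thm}(viii) gives a non-equivariant splitting $\Phi$ of $aF^{r_1}$, and that Lemma~\ref{multiple.thm} lets one iterate the $G$-$F$-pure splitting, is also the right starting material. However, the entire content of the theorem lies in what you call ``the equivariantization of $\id\otimes\Phi$,'' and your proposal stops exactly there, saying only that you ``expect to resolve it'' via an $\Ext^1$ vanishing argument in the category of $(G,S^{(r_1)})$-modules. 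That sketch is not a proof, and as written it is unlikely to close the gap: the vanishing you want is an $\Ext^1$ over a category of modules over the huge ring $S^{(r_1)}$ with $G$-action, whereas the projectivity/injectivity of Steinberg modules is a statement in the $G_{r_2}$-module category. You give no mechanism for transporting one into the other, and you never say how the finitely generated, positively graded, or Gorenstein hypotheses enter that step.

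The paper does something concretely different. It uses the Gorenstein hypothesis not merely to invoke Lemma~\ref{F-sing.thm}(viii), but to identify $\Hom_{S^{(r)}}(S,S^{(r)})\cong S((1-p^r)a)$ as a rank-one free graded $(G,S)$-module and to produce explicit $(G,S^{(r)})$-linear generators $\sigma_r$. The commuting isomorphisms $Q_{r,u}$ then let one compare splittings at different Frobenius levels. The crucial point is that the surjection $(bF^r)^*$ coming from strong $F$-regularity restricts to a $G$-equivariant surjection $W\twoheadrightarrow k\cdot\sigma_{u-r}^{(r)}$, where $W$ is a $G$-submodule of a single finite-dimensional graded component $V=S_{-(p^r-1)a-d}$; Lemma~\ref{steinberg.thm} (a lifting statement for $G$-linear surjections $\St_u\otimes W\to\St_u$, $W$ a subquotient of a fixed finite-dimensional $V$, $u\gg0$) then supplies the $G$-linear section $g_1$ which, after being fed through $Q_{r,u}$, $\Phi$, and the $G$-$F$-pure splitting, assembles into the required $(G,S^{(u)})$-linear splitting of $\id\otimes bF^u$. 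This reduction from an infinite-dimensional module-theoretic problem to a finite-dimensional representation-theoretic one is precisely the step your proposal lacks and would need to supply before the sketch could be called a proof.
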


\begin{proof}
Note that $S$ is normal \cite[(4.2)]{HH3}.
As $S$ is positively graded, $S$ is an integral domain.

Replacing $G$ by $\Gamma$, where $\Gamma\rightarrow [G,G]$ is the
universal covering, we may assume that $G$ is semisimple 
and
simply connected,
by Lemma~\ref{f-reg-derived.thm} and Lemma~\ref{f-pure-derived.thm}.

As $S$ is $G$-$F$-pure, there exists some $l\geq 1$ such that
$\id\otimes F^l:\St_l\otimes S^{(l)}\rightarrow \St_l\otimes S$ has
a $(G,S^{(l)})$-linear  splitting
$\psi:\St_l\otimes S\rightarrow \St_l\otimes S^{(l)}$.

Note that any graded
$(G,S)$-module which is rank one free as an $S$-module
is of the form $S(n)$, where $S(n)$ is $S$ as a $(G,S)$-module, but
the grading is given by $S(n)_i=S_{n+i}$.
In fact, let $-n$ be the generating degree of the rank one free graded
$(G,S)$-module, say $M$, 
then $M_{-n}\otimes S\rightarrow M$ is a $(G,S)$-isomorphism.
As $M_{-n}$ is trivial as a $G$-module (since $G$ is semisimple),
$M_{-n}\cong k(n)$ as a graded $G$-module.
Thus $M\cong k(n)\otimes S\cong S(n)$.

Let $a$ be the $a$-invariant of the Gorenstein positively graded ring $S$.
Namely, $\omega_S\cong S(a)$ (as a 
graded $(G,S)$-module, see the last paragraph).
Then
\[
\Hom_{S^{(r)}}(S,S^{(r)})\cong
\Hom_{S^{(r)}}(S,(\omega_S)^{(r)}(-p^ra))\cong \omega_S(-p^ra)\cong
S((1-p^r)a)
\]
for $r\geq 0$.

Let $\sigma$ be any nonzero element of $\Hom_{S^{(1)}}(S,S^{(1)})_{(p-1)a}
\cong S_0$.
As $S_0=k$ is $G$-trivial,
$\sigma:S\rightarrow S^{(1)}$ is $(G,S^{(1)})$-linear of degree $(p-1)a$.

For $r\geq 0$, let $\sigma_r$ be the composite
\[
S\xrightarrow{\sigma}S^{(1)}\xrightarrow{\sigma^{(1)}}S^{(2)}
\xrightarrow{\sigma^{(2)}}\cdots
\xrightarrow{\sigma^{(r-1)}}S^{(r)}.
\]
It 
is $(G,S^{(r)})$-linear of degree $(p^r-1)a$.
Note that $\sigma_u=\sigma_{u-r}^{(r)}\sigma_r$ for $u\geq r$.

Hence by the composite map
\begin{multline}\label{Q.eq}
Q_{r,u}: \Hom_{S^{(r)}}(S,S^{(r)})
\xrightarrow{\sigma_{u-r}^{(r)}}
\Hom_{S^{(r)}}(S,\Hom_{S^{(u)}}(S^{(r)},S^{(u)}))\\
\cong
\Hom_{S^{(u)}}(S^{(r)}\otimes_{S^{(r)}}S,S^{(u)})
\cong
\Hom_{S^{(u)}}(S,S^{(u)}),
\end{multline}
the element $\sigma_r$ is mapped to $\sigma_u$,
where the first map $\sigma^{(r)}_{u-r}$ maps $f\in
\Hom_{S^{(r)}}(S,S^{(r)})$ to the map $x\mapsto f(x)\cdot \sigma^{(r)}_
{u-r}$.
More precisely, we have $Q_{r,u}(f)= \sigma_{u-r}^{(r)}\circ f$.

By the induction on $u$, it is easy to see that $Q_{1,u}$ is an isomorphism,
and $\sigma_u$ is a generator of
the rank one $S$-free module $\Hom_{S^{(u)}}(S,S^{(u)})$.
It follows that $Q_{r,u}$ is an isomorphism for any $u\geq r$.

We continue the proof of Theorem~\ref{main.thm}.
Take a nonzero homogeneous element $b$ of $A=S^G$.
It suffices to show that there exists some $u\geq 1$ such that
$\id\otimes 
bF^u:\St_u\otimes S^{(u)}\rightarrow \St_u\otimes S$ 
splits as a $(G,S^{(u)})$-linear map.

As $S$ is $F$-rational Gorenstein, it is strongly $F$-regular
by Lemma~\ref{F-sing.thm}, {\bf (viii)}.
So there exists some $r\geq 1$ such that
\[
(bF^r_S)^*:
\Hom_{S^{(r)}}(S,S^{(r)})\rightarrow \Hom_{S^{(r)}}(S^{(r)},S^{(r)})
\]
given by $(bF^r_S)^*(\varphi)=\varphi b F^r_S$ is surjective.

Let $V$ be the degree $-(p^r-1)a-d$ component of $S$,
where $d$ is the degree of $b$.
Note that $V\cong \Hom_{S^{(r)}}(S,S^{(r)})_{-d}$ is mapped onto 
$k\cong \Hom_{S^{(r)}}(S^{(r)},S^{(r)})_0$ by $(bF^r_S)^*$.
In particular, $-(p^r-1)a-d\geq 0$.
So $a\leq 0$.
If $S\neq k$, then it is easy to see that $a<0$.

By Lemma~\ref{steinberg.thm}, there exists some $u_0\geq 1$ such that
for any $u\geq u_0$,
for any subquotient $W$ of $V$, and any $G$-linear 
nonzero map $f:\St_u\otimes W
\rightarrow \St_u$, there exists some $G$-linear map $g:\St_u
\rightarrow \St_u\otimes W$ such that $fg=\id$.
Take $u\geq u_0$ such that $u-r$ is divisible by $l$.

Now the diagram
\[
\xymatrix{
\Hom_{S^{(r)}}(S,S^{(r)}) \ar@{>>}[r]^{(bF^r)^*} \ar[d]^{Q_{r,u}}_{\cong} &
\Hom_{S^{(r)}}(S^{(r)},S^{(r)}) \ar[d]^{Q_{0,u-r}^{(r)}}_{\cong} \\
\Hom_{S^{(u)}}(S,S^{(u)}) \ar[r]^{(bF^r)^*} &
\Hom_{S^{(u)}}(S^{(r)},S^{(u)})
}
\]
is commutative.
So the bottom $(bF^r)^*$ is surjective.
Let us consider the surjection
\[
(bF^r)^*: W=((bF^r)^*)^{-1}(k\cdot \sigma_{u-r}^{(r)})
\cap
\Hom_{S^{(u)}}(S,S^{(u)})_{ap^r(p^{u-r}-1)-d}
\rightarrow 
k\cdot \sigma_{u-r}^{(r)}.
\]
By definition, 
$W$ is
contained in the degree $ap^r(p^{u-r}-1)-d$ component
of $\Hom_{S^{(u)}}(S,S^{(u)})=S(-a(p^u-1))$, which is isomorphic to $V$
as a $G$-module.
So $W$ is isomorphic to a $G$-submodule of $V$.

Let $E:=\Hom(\St_u,\St_u)$.
Then by the choice of $u_0$ and $u$, there exists some $G$-linear map 
$g_1:E\rightarrow E\otimes W$
such that the composite
\[
E\xrightarrow{g_1} E\otimes W\xrightarrow{1\otimes (bF^r)^*} 
E\otimes (k\cdot \sigma_{u-r}^{(r)})
\]
maps $\varphi$ to $\varphi\otimes \sigma_{u-r}^{(r)}$.

We identify $E\otimes\Hom_{S^{(u)}}(S,S^{(u)})$
by $\Hom_{S^{(u)}}(\St_u\otimes S,\St_u\otimes S^{(u)})$ in a natural way.
Similarly, $E\otimes\Hom_{S^{(r)}}(S^{(r)},S)$ is identified with
$\Hom_{S^{(r)}}(\St_u\otimes S^{(r)},\St_u\otimes S)$, and so on.

Then letting $\nu:=g_1(\id_{\St_u})$, the composite
\[
\St_u\otimes S^{(r)}\xrightarrow{\id\otimes
bF^r}\St_u\otimes S\xrightarrow{\nu}
\St_u\otimes S^{(u)}
\]
agrees with $\id\otimes \sigma_{u-r}^{(r)}$.

Since $S$ is $G$-$F$-pure, $u-r$ is a multiple of $l$, and 
$\St_u\cong \St_r\otimes \St_{u-r}^{(r)}$, 
there exists some $(G,S^{(u)})$-linear map $\Phi:\St_u\otimes
S^{(r)}\rightarrow \St_u\otimes S^{(u)}$ such that
$\Phi\circ(\id_{\St_u}\otimes F_S^{u-r})=\id$
by Lemma~\ref{multiple.thm}.

Viewing $\Phi$ as an element of $E\otimes \Hom_{S^{(u)}}(S^{(r)},S^{(u)})$,
let $$\beta\in E\otimes \Hom_{S^{(r)}}(S^{(r)},S^{(r)})$$ be the element
$(\id_E\otimes (Q_{r,u}^{(r)})^{-1})(\Phi)$.
In other words, $\beta:\St_u\otimes S^{(r)}\rightarrow \St_u\otimes S^{(r)}$ 
is the unique map such that the composite
\[
\St_u\otimes S^{(r)}\xrightarrow \beta
\St_u\otimes S^{(r)}\xrightarrow{\id\otimes \sigma_{u-r}^{(r)}}
\St_u\otimes S^{(u)}
\]
is $\Phi$.

Write $\beta=\sum_i \varphi_i\otimes a_i^{(r)}$, where $\varphi_i\in E$ and
$a_i\in S$.
Define $\beta'
\in E\otimes \Hom_S(S,S)$ by $\beta'=\sum_i\varphi_i\otimes a_i^{p^r}$.
Then it is easy to check that $(\id\otimes bF^r)\circ \beta
=\beta'\circ (\id\otimes
bF^r)$ as maps $\St_u\otimes S^{(r)}\rightarrow \St_u\otimes S$.

Combining the observations above,
the whole diagram of $(G,S^{(u)})$-modules 
\[
\xymatrix{
~\St_u\otimes S^{(u)}~ \ar[r]^{\id\otimes F^{u-r}} \ar[d]^{\id} &
~\St_u\otimes S^{(r)}~ \ar[ld]_{\Phi} \ar[d]^\beta 
\ar[dr]^{\id\otimes bF^r} & \\
~\St_u\otimes S^{(u)}~ &
~\St_u\otimes S^{(r)}~ \ar[l]_{\id\otimes \sigma_{u-r}^{(r)}} 
\ar[d]^{\id\otimes bF^r} &
\St_u\otimes S \ar[dl]^{\beta'} \\
 & \St_u\otimes S \ar[ul]^\nu & 
}
\]
is commutative.
So $\id\otimes bF^u:\St_u\otimes S^{(u)}\rightarrow \St_u\otimes S$
has a $(G,S^{(u)})$-linear splitting $\nu \beta'$.
\end{proof}

\begin{corollary}\label{main-cor1.thm}
Let $S$ be as in {\rm Theorem~\ref{main.thm}}.
Then $S^U$ is finitely generated and strongly $F$-regular.
\end{corollary}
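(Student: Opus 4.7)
The plan is to combine Theorem~\ref{main.thm} with strong $F$-regularity of $k[G]^U$ and Frobenius reciprocity in the form $S^U\cong (S\otimes_k k[G]^U)^G$.

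Finite generation of $S^U$ is immediate from Grosshans' fundamental theorem (\cite[Theorem~9]{Grosshans2}), since $U$ is a Grosshans subgroup of the reductive group $G$ and $S$ is a finitely generated $G$-algebra.

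For strong $F$-regularity, Theorem~\ref{main.thm} gives that $S$ is $G$-strongly $F$-regular. After reducing to the semisimple simply connected case via Lemmas~\ref{f-reg-derived.thm} and \ref{f-pure-derived.thm}, I would equip $k[G]^U$ with a positive grading coming from a strictly dominant one-parameter subgroup of $T$; then $k[G]^U$ is a Gorenstein $F$-regular domain (Lemma~\ref{k[G]^U.thm}) which is $G$-$F$-pure (Lemma~\ref{G-F-pure.thm}), so a second application of Theorem~\ref{main.thm} shows it is itself $G$-strongly $F$-regular. I would then verify that the tensor product $S\otimes_k k[G]^U$ (with diagonal $G$-action and combined positive grading) is $G$-strongly $F$-regular, and apply Lemma~\ref{gsfr-sfr.thm} to its $G$-invariant subring. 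The tensor identity $S\otimes \ind_U^G k \cong \ind_U^G S$ and Frobenius reciprocity then identify $(S\otimes_k k[G]^U)^G\cong S^U$, yielding the desired strong $F$-regularity.

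The main obstacle is proving the tensor-product stability of $G$-strong $F$-regularity for positively graded $G$-algebra domains, the strong-$F$-regular analog of Lemma~\ref{tensor-G-F-pure.thm}. Whereas $G$-$F$-purity needs a single splitting of $\id\otimes F^r$, $G$-strong $F$-regularity requires, for every nonzero homogeneous $c$ in the $G$-invariants of the tensor product, a splitting of $\id\otimes c F^r$. The expected route is to decompose $c$ using the $X(T)$-grading on $k[G]^U$ into summands paired with elements of $S^U$ (exploiting the linear reductivity of $T$ as in Lemma~\ref{f-reg-derived.thm}), extract Steinberg splittings for $S$ and $k[G]^U$ separately, and combine them at a common Frobenius level by the composition trick of Lemma~\ref{multiple.thm}.
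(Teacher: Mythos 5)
Your plan is close in spirit to the paper's proof---both reduce to $G$ semisimple simply connected, both ultimately apply Lemma~\ref{gsfr-sfr.thm} to the $G$-invariants of $S\otimes k[G]^U$, and both use the identification $S^U\cong(S\otimes k[G]^U)^G$---but the two diverge at exactly the point you flag as ``the main obstacle,'' and this is a real gap in your version.

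You propose to show that $S$ and $k[G]^U$ are each $G$-strongly $F$-regular (via two applications of Theorem~\ref{main.thm}) and then prove that $G$-strong $F$-regularity passes to the tensor product. That last step is never established in the paper, and your sketch for it (decompose the homogeneous invariant $c$ along the $X(T)$-grading, combine Steinberg splittings, push to a common Frobenius level via Lemma~\ref{multiple.thm}) does not obviously close: a nonzero homogeneous element of $(S\otimes k[G]^U)^G$ need not decompose into tensors of homogeneous invariants of the two factors, and the splitting for $\id\otimes c F^r$ on $\St_r\otimes(S\otimes k[G]^U)^{(r)}$ is genuinely a splitting for a single element of the tensor product, not a juxtaposition of two separate splittings. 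The paper sidesteps this entirely: it applies Theorem~\ref{main.thm} \emph{once}, directly to $S\otimes k[G]^U$, and the hypotheses needed there are much more robust under tensor products. Specifically, strong $F$-regularity and Gorensteinness of $S\otimes k[G]^U$ follow from \cite[Theorem~5.2]{Hashimoto4} since both factors have these properties (using Lemma~\ref{k[G]^U.thm}), and $G$-$F$-purity of the tensor product follows from the easy Lemma~\ref{tensor-G-F-pure.thm}. So the only ``tensor stability'' the paper needs is for $G$-$F$-purity, not for $G$-strong $F$-regularity. If you want to salvage your route, replace the problematic middle step with this direct application of Theorem~\ref{main.thm} to the tensor product; the rest of your outline (Grosshans for finite generation, the $G$-invariants isomorphism, Lemma~\ref{gsfr-sfr.thm}) is then exactly what the paper does.
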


\begin{proof}
Finite generation is by \cite[Theorem~9]{Grosshans2}.

We prove the strong $F$-regularity.
We may assume that $G$ is semisimple and simply connected.
Then $k[G]^U$ is a strongly $F$-regular Gorenstein domain by
Lemma~\ref{k[G]^U.thm}.
Hence the tensor product $S\otimes k[G]^U$ is also a strongly $F$-regular
Gorenstein domain, see \cite[Theorem~5.2]{Hashimoto4}.
As $S$ is assumed to be $G$-$F$-pure and $k[G]^U$ is $G$-$F$-pure
by Lemma~\ref{G-F-pure.thm}, the tensor product $S\otimes k[G]^U$ is
also $G$-$F$-pure by Lemma~\ref{tensor-G-F-pure.thm}.
Hence by the theorem, $S\otimes k[G]^U$ is $G$-strongly $F$-regular.
It follows that $(S\otimes k[G]^U)^G$ is strongly $F$-regular
by Lemma~\ref{gsfr-sfr.thm}.
As $S^U\cong (S\otimes k[G]^U)^G$ (see the proof of \cite[(1.2)]{Grosshans}.
See also \cite[Lemma~4.1]{Dolgachev}),
we are done.
\end{proof}

\begin{corollary}\label{w-main.thm}
Let $V$ be a finite dimensional $G$-module, and assume that
$S=\Sym V$ has a good filtration as a $G$-module.
Then $S^U$ is finitely generated and strongly $F$-regular.
\end{corollary}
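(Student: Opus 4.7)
The plan is to apply Corollary~\ref{main-cor1.thm} directly to $S=\Sym V$; this reduces the corollary to verifying two kinds of hypotheses, one purely ring-theoretic and one representation-theoretic.

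On the ring-theoretic side, $S=\Sym V$ is a polynomial ring on the finite-dimensional vector space $V$, so it is automatically a positively graded (with $S_0=k$) finitely generated $k$-algebra, and it is an integral domain. Being regular, it is Cohen--Macaulay, Gorenstein, and (since it is in fact a polynomial ring over a perfect field) $F$-regular, hence in particular $F$-rational. So the first hypothesis of Theorem~\ref{main.thm} is free.

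On the representation-theoretic side, I would invoke the good filtration hypothesis through Theorem~\ref{good-filtration.thm}, which tells us that if $\Sym V$ has a good filtration as a $G$-module, then $S$ is $G$-strongly $F$-regular. The remark immediately before Lemma~\ref{tensor-G-F-pure.thm} observes that any $G$-strongly $F$-regular positively graded finitely generated $G$-algebra domain is $G$-$F$-pure, so $S$ is $G$-$F$-pure. This supplies the second hypothesis of Theorem~\ref{main.thm}.

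With both hypotheses in hand, Corollary~\ref{main-cor1.thm} applies to $S$ and delivers simultaneously that $S^U$ is finitely generated and strongly $F$-regular. There is no real obstacle here; this corollary is a packaging result, and all the substantive work has been carried out upstream in Theorem~\ref{good-filtration.thm} (which unpacks the good filtration into splittings twisted by Steinberg modules) and in Theorem~\ref{main.thm} (which boots $G$-$F$-purity up to $G$-strong $F$-regularity under $F$-rational Gorenstein hypotheses), together with the transfer from $G$-strong $F$-regularity of $S\otimes k[G]^U$ to strong $F$-regularity of $S^U=(S\otimes k[G]^U)^G$ used in the proof of Corollary~\ref{main-cor1.thm}.
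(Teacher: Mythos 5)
Your proposal is correct and takes essentially the same route as the paper, which simply says the corollary "follows immediately from Corollary~\ref{main-cor1.thm} and Theorem~\ref{good-filtration.thm}." You correctly fill in the details: $\Sym V$ is a polynomial ring hence $F$-rational Gorenstein, Theorem~\ref{good-filtration.thm} yields $G$-strong $F$-regularity, and the remark preceding Lemma~\ref{tensor-G-F-pure.thm} gives $G$-$F$-purity, so Corollary~\ref{main-cor1.thm} applies.
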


\begin{proof}
Follows immediately from Corollary~\ref{main-cor1.thm} and
Theorem~\ref{good-filtration.thm}.
\end{proof}

\section{The unipotent radicals of parabolic subgroups}

Let the notation be as in the introduction.
Let $I$ be a subset of $\Delta$.
Let $L=L_I$ be the corresponding Levi subgroup $C_G(\bigcap_{\alpha\in I}
(\Ker \alpha)^\circ)$, where $(?)^\circ$ denotes the identity component,
and $C_G$ denotes the centralizer.
Let $P=P_I$ be the parabolic subgroup generated by $B$ and $L$.
Let $U_P$ be the unipotent radical of $P$.
Let $B_L:=B\cap L$, and $U_L$ the unipotent radical of $B_L$.

Here are two theorems due to Donkin.

\begin{theorem}[Donkin {\cite[(1.2)]{Donkin3}}]\label{Donkin.thm}
Let $w_0$ and $w_L$ denote the longest elements of the Weyl groups of 
$G$ and $L$, respectively.
For $\lambda\in X^+$, we have $\nabla_G(\lambda)^{U_P}\cong\nabla_L(
w_Lw_0\lambda)$ as $L$-modules.
\end{theorem}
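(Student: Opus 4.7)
My plan is to deduce this from transitivity of induction combined with Donkin's result that $V^{U_P}$ inherits a good $L$-filtration whenever $V$ has a good $G$-filtration.

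First, the transitivity step: since $P = L \cdot U_P$ and $B = B_L \cdot U_P$ with $L \cap U_P = B_L \cap U_P = \{e\}$, the natural map $L/B_L \to P/B$ is an isomorphism. Hence $\ind_B^P(\lambda) \cong \nabla_L(\lambda)$ as a $P$-module (with $U_P$ acting trivially, since $\nabla_L(\lambda)$ is an $L$-module pulled back along $P \twoheadrightarrow L$), and by transitivity of induction,
\begin{equation*}
\nabla_G(\lambda) = \ind_B^G(\lambda) = \ind_P^G(\ind_B^P(\lambda)) \cong \ind_P^G(\nabla_L(\lambda)).
\end{equation*}

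Second, I invoke Donkin's theorem \cite{Donkin3}: for a $G$-module $V$ with a good $G$-filtration, $V^{U_P}$ has a good $L$-filtration. Applied to $V = \nabla_G(\lambda)$, the length of the resulting good $L$-filtration equals $\dim (V^{U_P})^{U_L}$, since each subquotient $\nabla_L(\nu_i)$ contributes a one-dimensional $U_L$-fixed subspace (its lowest-$L$-weight line) and $\Ext^1$-vanishing for good $L$-modules makes the sequences of $U_L$-invariants short exact. Using the decomposition $U = U_L \cdot U_P$ of algebraic groups (valid because $\text{Lie}(U) = \text{Lie}(U_L) \oplus \text{Lie}(U_P)$ and the factors are connected), we have $(V^{U_P})^{U_L} = V^U$, which by the same length argument applied to the trivial good $G$-filtration of $V$ is one-dimensional of weight $w_0 \lambda$. (Note that here $V^U$ denotes group invariants, not merely Lie-algebra invariants---the distinction matters in characteristic $p$.)

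It follows that $V^{U_P} \cong \nabla_L(\mu)$ for a unique $L$-dominant weight $\mu$. To identify $\mu$, I match weights of $U_L$-invariants: $\nabla_L(\mu)^{U_L}$ is the one-dimensional lowest-$L$-weight line of $\nabla_L(\mu)$, of weight $w_L \mu$, while $(V^{U_P})^{U_L} = V^U$ has weight $w_0 \lambda$. Hence $w_L \mu = w_0 \lambda$, so $\mu = w_L^{-1} w_0 \lambda = w_L w_0 \lambda$ since $w_L$ is an involution. The main obstacle is the invocation of Donkin's good-filtration result, a deep theorem in the modular representation theory of reductive groups; once this input is granted, the rest of the argument is a formal length-and-weight count.
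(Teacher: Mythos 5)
The paper offers no proof of this statement at all; it simply cites Donkin \cite[(1.2)]{Donkin3}, so there is no ``paper's own proof'' to compare against. Your derivation is therefore supplying something the paper doesn't. The mechanics of your argument are sound: once you know $\nabla_G(\lambda)^{U_P}$ admits a good $L$-filtration, the observation that a good $L$-module $M$ has $\dim M^{U_L}$ equal to the number of $\nabla_L$-factors (because each $\nabla_L(\nu)$ has a one-dimensional $U_L$-socle and $\Ext^1_L(\Delta_L,\nabla_L)=0$ makes the multiplicity count reliable), combined with $U=U_L\cdot U_P$ and the one-dimensionality of $\nabla_G(\lambda)^U$ in weight $w_0\lambda$, does force the filtration to have a single factor; and matching $U_L$-socle weights then pins it down to $\nabla_L(w_Lw_0\lambda)$. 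This length-and-weight count is clean and correct.

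However, there is a real circularity concern. You invoke Donkin's good-filtration theorem for $U_P$-invariants, which is exactly Theorem~\ref{Donkin2.thm} in this paper, i.e.\ \cite[(1.4)]{Donkin3}. In Donkin's 1988 paper, (1.4) is proved \emph{after} and largely \emph{from} (1.2): the base case of the argument that $(-)^{U_P}$ takes good $G$-modules to good $L$-modules and is exact on good filtrations is precisely the computation of $\nabla_G(\lambda)^{U_P}$ and its higher $U_P$-cohomology. So your proof is really a (valid and instructive) derivation of (1.2) \emph{from} (1.4), not an independent proof of (1.2); it would only stand on its own if one first establishes (1.4) by a route that avoids (1.2), e.g.\ via the tilting-module machinery of Donkin's 1994 paper \cite[(3.9)]{Donkin4}, a dependency you should state explicitly. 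A smaller point: your ``transitivity step'' producing $\nabla_G(\lambda)\cong\ind_P^G\nabla_L(\lambda)$ is never used in the remainder of the argument and can be deleted (though it is in fact the starting point of Donkin's \emph{actual} proof of (1.2), which proceeds by analyzing $U_P$-invariants of $\ind_P^G\nabla_L(\lambda)$ via the big cell, rather than by the filtration count you use).
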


\begin{theorem}[Donkin {\cite[(1.4)]{Donkin3}, \cite[(3.9)]{Donkin4}}]%
\label{Donkin2.thm}
Let
\[
0\rightarrow M_1\rightarrow M_2\rightarrow M_3\rightarrow 0
\]
be a short exact sequence of $G$-modules.
If $M_1$ is good, then
\[
0\rightarrow M_1^{U_P}\rightarrow M_2^{U_P}\rightarrow M_3^{U_P}\rightarrow 0
\]
is exact.
In other words, if $M$ is a good $G$-module, then
$R^i(H^0({U_P},?)\circ \res_{U_P}^G)(M)=0$ for $i>0$.
\end{theorem}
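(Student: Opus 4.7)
The two formulations in the theorem statement are equivalent by dimension-shifting and the long exact sequence of derived functors; my plan is therefore to show $R^i((?)^{U_P})(M) = H^i(U_P, M) = 0$ for every $i > 0$ and every good $G$-module $M$.

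For the first reduction, since $H^i(U_P, -)$ commutes with filtered colimits in the category of rational $U_P$-modules---it is computed by the Hochschild complex of the affine group $U_P$, and $k$ is finitely presented---and since any good $G$-module is a filtered union of submodules admitting finite good filtrations, a routine induction on the length of the filtration, using the long exact sequence applied to
\[
0 \to W_{i-1} \to W_i \to \nabla_G(\lambda_i) \to 0,
\]
reduces the problem to showing
\[
H^i(U_P, \nabla_G(\lambda)) = 0 \qquad (\lambda \in X^+,\ i > 0).
\]

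For this key vanishing I would realize $\nabla_G(\lambda) = H^0(G/B, \mathcal{L}(\lambda))$ and exploit the smooth proper fibration $\pi \colon G/B \to G/P$, whose fibers are isomorphic to $L/B_L$. A weight dominant for $G$ is automatically dominant for $L$ (the simple roots of $L$ are a subset of those of $G$), so Kempf vanishing applied to the Levi gives $R^j\pi_*\mathcal{L}(\lambda) = 0$ for $j > 0$, and identifies $\pi_*\mathcal{L}(\lambda)$ with the $G$-equivariant vector bundle on $G/P$ associated to a dual Weyl $L$-module (in agreement with Theorem~\ref{Donkin.thm}). Passing to the principal $L$-bundle $G/U_P \to G/P$ and using the adjunction identifying $(?)^{U_P}$ on $G$-modules with global sections of an associated quasi-coherent sheaf on $G/U_P$, the higher $U_P$-cohomology becomes the higher sheaf cohomology of a pushforward to $G/P$, which vanishes by the Kempf computation just performed.

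The main obstacle is to make this last identification between the algebraic invariant $H^i(U_P, -)$ and sheaf cohomology on $G/P$ entirely rigorous; in effect, this amounts to proving that $\nabla_G(\lambda)$ restricts to an injective $U_P$-module, i.e., a direct summand of some cofree $k[U_P]$-comodule. Donkin's proofs in \cite{Donkin3,Donkin4} achieve this via a parabolic refinement of the excellent-filtration machinery; an alternative route is to first handle the case $P = B$ (which is essentially classical via Andersen--Kempf) and then bootstrap to general $P$ by the Hochschild--Serre spectral sequence for the normal inclusion $U_P \triangleleft U$.
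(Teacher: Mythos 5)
Your opening move — ``my plan is therefore to show $R^i((?)^{U_P})(M) = H^i(U_P, M) = 0$'' — identifies the derived functor of the composite $H^0(U_P,?)\circ\res^G_{U_P}$ (computed with injective resolutions in $\Mod_G$) with rational $U_P$-cohomology $H^i(U_P,-)$ (computed with injective resolutions in $\Mod_{U_P}$), and this identification is false. Since $G/U_P$ is only quasi-affine, $\ind_{U_P}^G$ is not exact and $\res^G_{U_P}$ does not carry injective $G$-modules to injective $U_P$-modules, so the two derived functors genuinely differ. This is not a pedantic distinction: for $G=SL_2$ and $U_P=U\cong\Bbb G_a$, the trivial module $k=\nabla_G(0)$ is good, yet $H^1(\Bbb G_a,k)\neq 0$ in every characteristic (its classes are the nonzero additive polynomials $\sum a_iT^{p^i}$, arising from $U$-module extensions $0\to k\to E\to k\to 0$ that do not lift to extensions of $G$-modules). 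Thus the chain of reductions that ends at ``show $H^i(U_P,\nabla_G(\lambda))=0$ for $i>0$'' is aiming at a statement that fails already for $\lambda=0$, and your closing observation that ``this amounts to proving that $\nabla_G(\lambda)$ restricts to an injective $U_P$-module'' asks for something impossible: the injective hull of the trivial simple over a nontrivial unipotent group is infinite-dimensional, so the only finite-dimensional injective $U_P$-module is $0$.

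The geometric step has the same flaw in a different guise: the functor that converts to sheaf cohomology on $G/U_P$ is $\ind_{U_P}^G$ (with $R^i\ind_{U_P}^G(N)\cong H^i(G/U_P,\Cal L(N))$), not the invariants functor $(?)^{U_P}$, and you are quietly replacing one by the other when you pass to ``global sections of an associated sheaf on $G/U_P$.'' Kempf vanishing controls $R^j\pi_*\Cal L(\lambda)$ for $\pi:G/B\to G/P$, which recomputes $\nabla_G(\lambda)$ as a $G$-module but says nothing directly about $U_P$-invariants or their higher derived functors. Donkin's actual proof stays on the representation-theoretic side: the essential content is that, for $\lambda\in X^+$, the restriction $\nabla_G(\lambda)|_P$ admits a filtration by $P$-modules inflated from dual Weyl modules of $L$ (a ``$P$-good'' filtration), on which $(?)^{U_P}$ is manifestly exact; the derived functor of the composite in $\Mod_G$ then vanishes because every SES of good $G$-modules is, on restriction to $P$, compatible with these filtrations — precisely the ``extra rigidity'' that kills the $G$-liftable part of $H^1(U_P,-)$ even though $H^1(U_P,-)$ itself does not vanish.
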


From these two theorems, it follows immediately:

\begin{lemma}\label{U_P-good.thm}
Let $M$ be a good $G$-module.
Then $M^{U_P}$ is a good $L$-module.
\end{lemma}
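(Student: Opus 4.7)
The plan is to combine Donkin's Theorems~\ref{Donkin.thm} and~\ref{Donkin2.thm} with a good filtration of $M$. Recall from the introduction that a good $G$-module of countable dimension admits a good filtration; this already covers the applications in this paper (typically $M=\Sym V$ for a finite-dimensional $V$). I will therefore assume $M$ carries a good filtration
\[
0=M_0\subset M_1\subset M_2\subset\cdots,\qquad M=\bigcup_{i\geq 0} M_i,
\]
with $M_i/M_{i-1}\cong\nabla_G(\lambda(i))$ for some $\lambda(i)\in X^+$; the general case reduces to this one by writing $M$ as a directed union of good submodules of countable dimension.

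First I would observe, by induction on $i$, that each $M_{i-1}$ is itself good, since the truncated chain $0=M_0\subset\cdots\subset M_{i-1}$ is again a good filtration. Theorem~\ref{Donkin2.thm} then applies to the short exact sequence $0\to M_{i-1}\to M_i\to\nabla_G(\lambda(i))\to 0$ and produces a short exact sequence of $L$-modules
\[
0\to M_{i-1}^{U_P}\to M_i^{U_P}\to \nabla_G(\lambda(i))^{U_P}\to 0,
\]
and Theorem~\ref{Donkin.thm} identifies the right-hand term with $\nabla_L(w_Lw_0\lambda(i))$. Since this quotient is nonzero, the weight $w_Lw_0\lambda(i)$ is automatically dominant for $L$, by the criterion $\ind_{B_L}^L(\mu)\neq 0\iff\mu$ is $L$-dominant (applied with $L$ in place of $G$).

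To conclude, $U_P$-fixed-points commute with directed unions of $G$-submodules, so $M^{U_P}=\bigcup_i M_i^{U_P}$. The chain $\{M_i^{U_P}\}$ thus constitutes a good filtration of $M^{U_P}$ as an $L$-module, and in particular $M^{U_P}$ is good over $L$. The only delicate point in the argument is checking, at each inductive step, that $M_{i-1}$ satisfies the hypothesis of Theorem~\ref{Donkin2.thm}, but this is immediate from the truncated-filtration remark; I expect no serious obstacle beyond bookkeeping.
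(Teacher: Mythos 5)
Your argument is exactly what the paper intends when it says the lemma ``follows immediately'' from Theorems~\ref{Donkin.thm} and~\ref{Donkin2.thm}: pass $U_P$-invariants through a good filtration term by term, using Donkin's exactness to preserve the short exact sequences and the identification $\nabla_G(\lambda)^{U_P}\cong\nabla_L(w_Lw_0\lambda)$ to recognize the subquotients as dual Weyl modules for $L$. The reduction of the general case to countable dimension that you invoke is standard (a good module is a directed union of countable-dimensional good submodules, and goodness passes to filtered colimits because each $\Delta_L(\mu)$ is finite-dimensional, hence compact), and in any event the modules appearing in this paper's applications are all of countable dimension.
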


So we have:

\begin{proposition}\label{U_P-main.thm}
Let $k$ be of positive characteristic.
Let $S$ be a finitely generated positively graded $G$-algebra.
Assume that $S$ is Gorenstein $F$-rational, and $G$-$F$-pure.
Then $S^{U_P}$ is finitely generated and $F$-rational.
\end{proposition}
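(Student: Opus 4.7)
My plan is to apply Corollary~\ref{U-F-rational.thm} in the Levi setting: with the Levi subgroup $L$ of $P$ playing the role that $G$ plays in the statement of that corollary, and the $L$-algebra $A := S^{U_P}$ playing the role of the input. The Levi decomposition $P = L \cdot U_P$ ensures that $L$ normalizes $U_P$, so $A$ is indeed an $L$-algebra. Moreover, the unipotent radical of $B$ factors as a semidirect product $U = U_L \cdot U_P$, where $U_L$ is the unipotent radical of $B_L := B \cap L$, which yields the crucial identification $A^{U_L} = S^U$. This reduction is the natural way to pass from the parabolic $P$ back to the Borel case of Corollary~\ref{main-cor1.thm}.

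The first input to Corollary~\ref{U-F-rational.thm} (applied to $A$ over $L$) is that $A^{U_L}$ be finitely generated and strongly $F$-regular. This follows immediately from what the paper has already proved: Theorem~\ref{main.thm} applied to $S$ gives that $S$ is $G$-strongly $F$-regular, and then Corollary~\ref{main-cor1.thm} produces that $S^U$ is finitely generated and strongly $F$-regular. Through the identification $A^{U_L} = S^U$, this verifies the hypothesis.

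The remaining hypothesis is that $A = S^{U_P}$ be good as an $L$-module; for this I would invoke Lemma~\ref{U_P-good.thm}, the consequence of Donkin's theorems asserting that $U_P$-invariants of a good $G$-module form a good $L$-module. Applied degree-by-degree to the finite-dimensional graded pieces $S_n$, once each $S_n$ is known to be good as a $G$-module, $(S_n)^{U_P}$ is good as an $L$-module, and summing gives goodness of $A$. Corollary~\ref{U-F-rational.thm} then yields that $S^{U_P}$ is finitely generated and $F$-rational, completing the proof. The main technical obstacle is the input just mentioned: that each finite-dimensional $G$-module $S_n$ is good. This is not literally among the stated hypotheses of Proposition~\ref{U_P-main.thm}, and bridging this gap — likely by deriving good filtrations from the $G$-strong $F$-regularity of $S$ via Mathieu-type theorems on Steinberg-module splittings — is the delicate point. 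In the target applications (as in Corollary~\ref{w-U_P.thm}), this goodness is supplied directly by the assumption that $\Sym V$ has a good filtration, so the obstacle dissolves in the cases of primary interest.
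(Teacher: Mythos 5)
Your argument is, modulo the caveat you raise at the end, exactly the paper's proof: apply Lemma~\ref{U_P-good.thm} to get that $S^{U_P}$ is good as an $L$-module, observe that $(S^{U_P})^{U_L}\cong S^U$ is finitely generated and strongly $F$-regular by Corollary~\ref{main-cor1.thm}, and then apply Corollary~\ref{U-F-rational.thm} to the $L$-action on $S^{U_P}$. (The paper does not repeat the invocation of Theorem~\ref{main.thm} since it is already folded into Corollary~\ref{main-cor1.thm}, but that is a cosmetic difference.)

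The scruple you raise at the end is a genuine observation, but it is not a gap between your argument and the paper's: it is a point the paper itself leaves implicit. The paper's proof begins ``By Lemma~\ref{U_P-good.thm}, $S^{U_P}$ is good as an $L$-module,'' and Lemma~\ref{U_P-good.thm} requires $S$ to be a good $G$-module, which is not among the stated hypotheses of Proposition~\ref{U_P-main.thm} (Gorenstein, $F$-rational, $G$-$F$-pure). The paper offers no derivation of goodness of $S$ from $G$-$F$-purity or $G$-strong $F$-regularity; it simply proceeds as though the hypothesis were present. So you are not missing an idea that the paper supplies — you are identifying the same implicit assumption. In the one place the proposition is used, Corollary~\ref{w-U_P.thm}, one has $S=\Sym V$ with a good filtration, so the needed goodness is available and nothing goes wrong; but as stated, Proposition~\ref{U_P-main.thm} should either carry ``$S$ is good as a $G$-module'' as an explicit hypothesis or be accompanied by an argument (which the paper does not give) that its stated hypotheses imply it.
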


\begin{proof}
By Lemma~\ref{U_P-good.thm}, $S^{U_P}$ is good as an $L$-module.
By Corollary~\ref{main-cor1.thm},
$(S^{U_P})^{U_L}\cong S^U$ is finitely generated and strongly $F$-regular.
By Corollary~\ref{U-F-rational.thm}, applied to the action of 
$L$ on $S^{U_P}$, we have that $S^{U_P}$ is finitely generated and
$F$-rational.
\end{proof}

\begin{corollary}\label{w-U_P.thm}
Let $k$ be of positive characteristic.
Let $V$ be a finite dimensional $G$-module, and assume that
$S=\Sym V$ is good as a $G$-module.
Then $S^{U_P}$ is a finitely generated strongly $F$-regular Gorenstein UFD.
\end{corollary}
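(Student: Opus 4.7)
The plan is to feed $S=\Sym V$ into Proposition~\ref{U_P-main.thm} and then upgrade its conclusion to the four stated properties using previously-proved facts. As a symmetric algebra $S$ is a polynomial ring and hence a regular, positively graded, finitely generated $k$-algebra; in particular $S$ is Gorenstein $F$-rational. Since a good $G$-module of countable dimension has a good filtration (Donkin, recalled in the introduction) and $\dim_k S$ is countable, $S$ carries a good filtration, so Theorem~\ref{good-filtration.thm} shows that $S$ is $G$-strongly $F$-regular, which in particular is $G$-$F$-pure (as recorded just before Lemma~\ref{tensor-G-F-pure.thm}). Applying Proposition~\ref{U_P-main.thm}, I obtain that $S^{U_P}$ is finitely generated and $F$-rational, and Lemma~\ref{F-sing.thm}(vi) then upgrades $F$-rationality (together with local excellence, automatic for finitely generated $k$-algebras) to Cohen--Macaulayness.

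Next I would establish the UFD property. The polynomial ring $S$ is factorial with $S^\times=k^\times$, and the connected unipotent group $U_P$ has trivial character group. These are exactly the hypotheses needed to invoke Popov's theorem (Remark~3 after Proposition~2 of \cite{Popov3}; see also \cite[(4.31)]{UFD}) on factoriality of invariants under connected character-free group actions on a factorial ring with trivial units, applied in essentially the same way as inside the alternate proof of Lemma~\ref{UFD.thm}. This yields that $S^{U_P}$ is a UFD.

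Finally, a finitely generated Cohen--Macaulay UFD is Gorenstein by Murthy's theorem (as cited in the proof of Lemma~\ref{k[G]^U.thm}), and once $S^{U_P}$ is known to be Gorenstein, Lemma~\ref{F-sing.thm}(viii) completes the upgrade from $F$-rational to strongly $F$-regular, finishing the proof. The only step requiring input beyond the machinery of Sections~2--4 is the UFD step; however, this is a routine application of a classical theorem of Popov, and I do not expect a serious obstacle anywhere in the argument.
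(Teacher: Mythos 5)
Your proof is correct and follows essentially the same route as the paper's: reduce to Proposition~\ref{U_P-main.thm} (via the good filtration, Theorem~\ref{good-filtration.thm}, and $G$-$F$-purity), get $S^{U_P}$ finitely generated and $F$-rational, establish the UFD property by Popov's theorem, deduce Gorenstein from Cohen--Macaulay UFD by Murthy, and upgrade to strongly $F$-regular by Lemma~\ref{F-sing.thm}(viii). The only difference is cosmetic: you spell out explicitly the good-filtration step and the Cohen--Macaulayness via Lemma~\ref{F-sing.thm}(vi), which the paper leaves implicit.
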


\begin{proof}
As $U_P$ is unipotent, $S^{U_P}$ is a UFD by
Remark~3 after Proposition~2 of \cite{Popov3}.

On the other hand, $S$ satisfies the assumption of 
Proposition~\ref{U_P-main.thm} by Theorem~\ref{good-filtration.thm}.
So by
Proposition~\ref{U_P-main.thm}, $S^{U_P}$ is finitely generated and
$F$-rational.
Being a finitely generated Cohen--Macaulay UFD, it is Gorenstein
\cite{Murthy}, 
and hence is strongly $F$-regular 
by Lemma~\ref{F-sing.thm}, {\bf (viii)}.
\end{proof}

\begin{remark}\label{zero.rem}
Let $k$ be of characteristic zero.
The characteristic-zero counterpart of Proposition~\ref{U_P-main.thm}
is stated as follows:
If $S$ is a finitely generated $G$-algebra with rational singularities,
then $S^{U_P}$ is finitely generated with rational singularities.
This is proved in the same line as Proposition~\ref{U_P-main.thm}.
Note that $S^U$ is finitely generated with rational singularities by
\cite[Corollary~4, Theorem~6]{Popov2}.
Then applying \cite[Corollary~4, Theorem~6]{Popov2} again to
the action of $L$ on $S^{U_P}$, $S^{U_P}$ is finitely generated and
has rational singularities, since $(S^{U_P})^{U_L}\cong S^U$ is so.

The characteristic-zero counterpart of Corollary~\ref{w-U_P.thm}
is stated as follows:
If $S$ is a finitely generated $G$-algebra with rational singularities
and is a UFD, then $S^{U_P}$ is a Gorenstein finitely generated 
UFD which is of strongly $F$-regular type.
As we have already seen, $S^{U_P}$ is finitely generated with
rational singularities.
$S^{U_P}$ is a UFD by Remark~3 after Proposition~2 of \cite{Popov3}.
As $S^{U_P}$ is also Cohen--Macaulay \cite[p.~50, Proposition]{TE}, 
$S^{U_P}$ is
Gorenstein \cite{Murthy}.
A Gorenstein finitely generated algebra with 
rational singularities is of strongly $F$-regular type, 
see \cite[(1.1), (5.2)]{Hara}.
\end{remark}

\section{Applications}

The following is pointed out in the proof of 
\cite[(5.2.3)]{SvdB}.

\begin{lemma}\label{SvdB.thm}
Let $K$ be a field of characteristic zero, 
$H$ be an extension of a finite group scheme by a torus over $K$, 
and $A$ a finitely generated $H$-algebra
of strongly $F$-regular type.
Then $A^H$ is of strongly $F$-regular type.
\end{lemma}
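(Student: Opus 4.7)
The plan is a standard spreading-out argument that deduces strong $F$-regular type for $A^H$ from that of $A$ by invoking Lemma~\ref{F-sing.thm}, {\bf(iii)} (direct summand subrings of strongly $F$-regular rings are strongly $F$-regular) on the mod-$p$ reductions. First I would fix a model of $A$: a finitely generated $\Bbb Z$-subalgebra $R_0\subseteq A$ and a finitely generated flat $R_0$-algebra $A_{R_0}$ with $A\cong K\otimes_{R_0}A_{R_0}$ such that $A_{R_0}/\fm A_{R_0}$ is strongly $F$-regular for every maximal ideal $\fm$ of $R_0$. Because $K$ has characteristic zero, the finite group scheme quotient $F:=H/T$ is étale. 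Enlarging $R_0$ to a sufficiently large finitely generated $\Bbb Z$-subalgebra $R\subseteq K$ and setting $A_R:=R\otimes_{R_0}A_{R_0}$, I may arrange that $H$ descends to a flat affine $R$-group scheme $H_R$ fitting in an exact sequence $1\to T_R\to H_R\to F_R\to 1$, with $T_R$ an $R$-torus and $F_R$ a finite étale $R$-group scheme; that $H_R$ acts on $A_R$ recovering the original action on $A$; and, crucially, that the order $n$ of $F$ is invertible in $R$.

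For every closed point $\fm\in\Spec R$, the residue field $R/\fm$ has characteristic either $0$ or a prime not dividing $n$, so $F_R\otimes_R R/\fm$ is linearly reductive (being finite étale of order invertible in the base field); combined with the linear reductivity of the torus $T_R\otimes_R R/\fm$, it follows that $H_R\otimes_R R/\fm$ is linearly reductive. Consequently, the formation of $H_R$-invariants is exact and commutes with base change on $R$: the inclusion $A_R^{H_R}\hookrightarrow A_R$ is $R$-split, $A_R^{H_R}$ is a finitely generated $R$-algebra, and
\[
A_R^{H_R}\otimes_R R/\fm\;\cong\;(A_R\otimes_R R/\fm)^{H_R\otimes_R R/\fm}
\]
is a direct summand subring of the strongly $F$-regular ring $A_R\otimes_R R/\fm$. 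By Lemma~\ref{F-sing.thm}, {\bf(iii)}, each such fiber is strongly $F$-regular, and since $K\otimes_R A_R^{H_R}\cong A^H$, this exhibits $A^H$ as being of strongly $F$-regular type.

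The main technical point is ensuring the existence of a model $H_R$ with the stated extension structure together with the compatibility of invariants with base change and the finite generation of $A_R^{H_R}$; all of these are standard consequences of the linear reductivity of $H_R$, which is itself engineered by inverting the single integer $n=|F|$. Once this spreading-out machinery is set up, the remainder of the proof is a direct application of the cited lemma, so the real content of the argument is the identification of linear reductivity as the property being transported through reduction mod~$p$.
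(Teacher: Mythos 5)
Your proof is correct in outline and takes a genuinely different route from the paper's. Where you keep $H$ intact, invoke linear reductivity of the whole group scheme $H_R$ over $R$, and then draw several consequences (exactness of invariants, compatibility with base change, finite generation of $A_R^{H_R}$, $R$-splitting of $A_R^{H_R}\hookrightarrow A_R$) in one stroke, the paper proceeds more elementarily: it first reduces to $K$ algebraically closed, so that $T\cong\Bbb G_m^r$ is split and $F$ is a constant finite group $\Gamma$, and then splits the invariants into two stages $A^H=(A^{\Bbb G_m^r})^\Gamma$. This allows each stage to be handled by hand --- for $\Bbb G_m^r$, the invariants are just the degree-zero piece of a $\Bbb Z^r$-graded ring, visibly a finitely generated direct summand compatible with base change; for $\Gamma$, the Reynolds operator $\frac{1}{n}\sum_{\gamma}\gamma$ does the job, with finite generation coming from Atiyah--Macdonald --- and so the paper never needs to assert linear reductivity of a group scheme over a general Noetherian base.

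The step in your argument that deserves scrutiny is the word ``consequently'': you pass from linear reductivity of the closed fibers $H_R\otimes_R R/\fm$ to exactness of $H_R$-invariants over $R$, base-change compatibility, finite generation, and $R$-splitting. This implication (fiber-wise linear reductivity $\Rightarrow$ linear reductivity over the base, with all the attendant consequences) is a genuine theorem, not a formality; it is true for the group schemes in question, but either needs a citation or needs the explicit case-by-case Reynolds operator construction the paper uses. If you supply that justification (or simply insert the paper's reduction to the split/constant case, where everything is explicit), your argument is complete. The upshot: your version is conceptually cleaner, identifying linear reductivity as the property being transported through reduction mod~$p$, while the paper's version is more self-contained, relying only on elementary facts about graded rings and finite group actions.
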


\begin{proof} Set $B=A^H$.
Let $\bar K$ be the algebraic closure of $K$.
As can be seen easily, if $\bar K\otimes_K B$ is of strongly $F$-regular
type, then so is $B$.
Since $\bar K\otimes_K B\cong (\bar K\otimes_K A)^{\bar K\otimes_K H}$,
replacing $K$ by $\bar K$, we may assume that $K$ is algebraically closed.
Then $H$ is an extension of a finite group $\Gamma$ by a split torus
$\Bbb G_m^r$ for some $r$.
As $A^H\cong (A^{\Bbb G_m^r})^\Gamma$, we may assume that $H$ is either a
split torus $\Bbb G_m^r$ or a finite group $\Gamma$.

Now we can take a finitely generated $\Bbb Z$-subalgebra $R$ of $K$
and a finitely generated flat $R$-algebra $A_R$
such that $K\otimes_R A_R\cong A$, and for any closed point $x$ of
$\Spec R$, $\kappa(x)\otimes_R A_R$ is strongly $F$-regular.
Extending $R$ if necessary, we have an action of $H_R$ on $A_R$ which is
extended to the action of $H$ on $A$, where $H_R=(\Bbb G_m)_R^r$ or $H_R=
\Gamma$.
Extending $R$ if necessary, we may assume that $n\in R^\times$, where 
$n$ is the order of $\Gamma$, if $H=\Gamma$.

Now set $B_R:=A_R^{H_R}$.

If $H=(\Bbb G_m)_R^r$, then $B_R$ is the degree zero component of the
$\Bbb Z^r$-graded finitely generated $R$-algebra $A_R$, and it is
finitely generated, and is a direct summand subring of $A_R$.

If $H=\Gamma$, then $B_R\rightarrow A_R$ is an integral extension
and $B_R$ is finitely generated by \cite[(7.8)]{AM}.
As $\rho:A_R\rightarrow B_R$ given by $\rho(a)=(1/n)\sum_{\gamma\in\Gamma}
\gamma a$ is a splitting, $B_R$ is a direct summand subring of $A_R$.

In either case, $B_R$ is finitely generated over $R$, so extending
$R$ if necessary, we may assume that $B_R$ is $R$-flat.
Note that $B\cong K\otimes_R B_R$, since $K$ is $R$-flat, and the
invariance is compatible with a flat base change.
Note also that $\kappa(x)\otimes_R B_R$ is a direct summand subring
of $\kappa(x)\otimes_R A_R$, and $\kappa(x)\otimes_R A_R$ is 
strongly $F$-regular.
Hence $\kappa(x)\otimes_R B_R$ is strongly $F$-regular by 
Lemma~\ref{F-sing.thm}, {\bf (iii)}.
This shows that $A^H=B$ is of strongly $F$-regular type.
\end{proof}

The following is a refinement of \cite[(5.2.3)]{SvdB}.

\begin{corollary}
Let $K$ be a field of characteristic zero, $H$ an affine algebraic group
scheme over $K$ such that $H^\circ$ is reductive.
Let $S$ be a finitely generated $H$-algebra 
which has rational singularities and is a UFD.
Then $S^H$ is of strongly $F$-regular type.
\end{corollary}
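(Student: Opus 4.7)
The plan is to peel off the group $H$ in three stages, each handled by a single tool already in the paper: Remark~\ref{zero.rem} for the unipotent part, and Lemma~\ref{SvdB.thm} for the torus and again for the finite quotient. Set $G := H^\circ$, which is connected reductive by hypothesis, let $\Gamma := H/H^\circ$ be the finite (étale) quotient group scheme, and fix a Borel subgroup $B = TU$ of $G$ with maximal torus $T$ and unipotent radical $U$.

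First, restricting the $H$-action to $G$ and applying Remark~\ref{zero.rem} with $P = B$ (the hypotheses ``rational singularities'' and ``UFD'' depend only on $S$ itself, and finite generation is inherited), I obtain that $S^U$ is a finitely generated Gorenstein UFD of strongly $F$-regular type. Second, $T$ normalizes $U$ and so acts rationally on $S^U$; since $G$ is connected reductive and $K$ has characteristic zero, $\ind_B^G k = k$, and Frobenius reciprocity yields $S^G = S^B = (S^U)^T$. Lemma~\ref{SvdB.thm}, applied to $T$ (a torus, hence a trivial extension of the trivial finite group scheme by a torus) acting on the finitely generated strongly $F$-regular type algebra $S^U$, then gives that $S^{H^\circ} = S^G$ is of strongly $F$-regular type.

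Third, since $H^\circ$ is normal in $H$, the finite group scheme $\Gamma$ acts on $S^{H^\circ}$ with $(S^{H^\circ})^\Gamma = S^H$; a second application of Lemma~\ref{SvdB.thm} (taking the group there to be $\Gamma$, a finite group scheme viewed as an extension of itself by the trivial torus) concludes that $S^H$ is of strongly $F$-regular type.

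The only step with any real content is the first: it requires the characteristic-zero counterpart of Corollary~\ref{w-U_P.thm}, which promotes ``rational singularities'' for $S^{U_P}$ to the stronger property of strongly $F$-regular type, and this is precisely what Remark~\ref{zero.rem} provides. Once that is in hand, the remaining obstacles are minor bookkeeping: checking that each intermediate invariant ring carries a rational action of the next group up (which is automatic from the normality of $U$ in $B$ and of $H^\circ$ in $H$) and recalling the standard identity $S^G = (S^U)^T$ in characteristic zero.
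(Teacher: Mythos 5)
Your decomposition is genuinely different from the paper's, and it has a gap tied to the fact that $K$ is \emph{not} assumed algebraically closed here. The paper peels off $H' := [H^\circ,H^\circ]$ in a single step: $H'$ is a normal subgroup of $H$ defined over $K$, $\bar K\otimes_K H'$ is semisimple with no nontrivial character, so Boutot gives rational singularities of $S^{H'}$ and \cite[(4.28)]{UFD} (which is formulated to work over a general $K$ via the condition on $\bar K\otimes_K H'$) gives that $S^{H'}$ is a UFD; then rational singularities $\Rightarrow$ Cohen--Macaulay, CM $+$ UFD $\Rightarrow$ Gorenstein (Murthy), and Gorenstein $+$ rational singularities $\Rightarrow$ strongly $F$-regular type (Hara). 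Finally $(H/H')^\circ$ is a torus, so one application of Lemma~\ref{SvdB.thm} finishes. Your route instead uses $S\to S^U\to (S^U)^T=S^{H^\circ}\to S^H$, with Remark~\ref{zero.rem} for the first step and Lemma~\ref{SvdB.thm} twice. Your identification $S^G=S^B=(S^U)^T$ via the tensor identity and $\ind_B^G k=k$ is fine, and so is the observation that $\Gamma=H/H^\circ$ acts on $S^{H^\circ}$ (while it does not act on $S^U$, since $U$ is not normal in $H$), so the ordering $U$, then $T$, then $\Gamma$ is forced.

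The gap is in the very first step. Remark~\ref{zero.rem} lives under the running assumption of the introduction that $k$ is algebraically closed, and ``fix a Borel $B=TU$ of $G$'' is not available over a general field $K$ of characteristic zero: $H^\circ$ may be a non-split or even anisotropic $K$-form, in which case no Borel (and possibly no nontrivial parabolic, hence $U=\{e\}$) exists over $K$. The natural fix --- base change to $\bar K$ and descend at the end --- does not go through as stated, because the hypothesis ``$S$ is a UFD'' is not stable under extension of the ground field, and it is precisely factoriality that feeds (via Cohen--Macaulayness and Murthy) into the Gorenstein hypothesis required by Hara's theorem in Remark~\ref{zero.rem}. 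The paper's choice of $[H^\circ,H^\circ]$ sidesteps the problem entirely, since the derived subgroup is a closed normal subgroup defined over $K$, and Boutot and \cite[(4.28)]{UFD} are stated over a general $K$. Your argument would be correct verbatim if $K$ were assumed algebraically closed (or $H^\circ$ were $K$-split); to handle general $K$ you would need to replace $U$ by $[H^\circ,H^\circ]$, at which point it essentially becomes the paper's proof.
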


\begin{proof}
Let $H':=[H^\circ,H^\circ]$.
Then $\bar K\otimes_K H'$ is semisimple, and does not have
a nontrivial character.
Thus $S^{H'}$ has rational singularities by Boutot's theorem \cite{Boutot}
and is a UFD by \cite[(4.28)]{UFD}.
So it is of strongly $F$-regular type by Hara \cite[(1.1), (5.2)]{Hara}.
As $(H/H')^\circ$ is a torus, $S^H=(S^{H'})^{H/H'}$ is of strongly $F$-regular
type by Lemma~\ref{SvdB.thm}.
\end{proof}

\begin{theorem}\label{quiver.thm}
Let $k$ be an algebraically closed field, and
$Q=(Q_0,Q_1,s,t)$ a finite quiver,
where $Q_0$ is the set of vertices, $Q_1$ is the set of arrows, and
$s$ and $t$ are the source and the target maps $Q_1\rightarrow Q_0$,
respectively.
Let $d:Q_0\rightarrow \Bbb N$ be a map.
For $i\in Q_0$, 
set $M_i:=k^{d(i)}$, and let $H_i$ be any closed subgroup scheme
of $\GL(M_i)$ of the following:
\begin{description}
\item[(1)] $\GL(M_i)$, $\SL(M_i)$;
\item[(2)] $\Sp_{d(i)}$ \(in this case, $d(i)$ is required to be even\);
\item[(3)] $\SO_{d(i)}$ \(in this case, the characteristic of $k$ must
not be two\);
\item[(4)] Levi subgroup of any of {\bf(1)}--{\bf(3)};
\item[(5)] Derived subgroup of any of {\bf(1)}--{\bf(4)};
\item[(6)] Unipotent radical of a parabolic subgroup of 
any of {\bf(1)}--{\bf(5)};
\item[(7)] Any subgroup $H_i$ of $\GL(M_i)$ with a closed
normal subgroup $N_i$
of $H_i$ such that $N_i$ is any of {\bf(1)}--{\bf(6)}, 
and $H_i/N_i$ is a linearly
reductive group scheme.
In characteristic zero, we require that $(H_i/N_i)^\circ$ is a torus.
\end{description}
Set $H:=\prod_{i\in Q_0}H_i$ and $M:=\prod_{\alpha\in Q_1}
\Hom(M_{s(\alpha)},M_{t(\alpha)})$.
Then $(\Sym M^*)^H$ is finitely generated, and strongly $F$-regular 
if the characteristic of $k$ is positive, and strongly $F$-regular type
if the characteristic of $k$ is zero.
\end{theorem}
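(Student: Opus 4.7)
My plan is to reduce the problem to an application of Corollary~\ref{w-U_P.thm} via a suitable embedding of $H$ into a reductive group, then handle the remaining invariants by torus (linearly reductive) arguments. First I reduce case \textbf{(7)}: if $N_i \trianglelefteq H_i$ expresses $H_i$ as an extension, by a linearly reductive group (resp.\ a group with torus identity component in characteristic zero), of a group in one of the preceding classes, set $N := \prod_i N_i$, so that $(\Sym M^*)^H = ((\Sym M^*)^N)^{H/N}$. Once the theorem is known for $N$, in characteristic $p > 0$ the outer invariants form a direct summand subring of a strongly $F$-regular ring, so strong $F$-regularity descends by Lemma~\ref{F-sing.thm}\textbf{(iii)}; in characteristic zero Lemma~\ref{SvdB.thm} transports strongly $F$-regular type from $(\Sym M^*)^N$ to the $H/N$-invariants. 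Hence I may assume each $H_i$ lies in one of \textbf{(1)}--\textbf{(6)}.

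Next, I construct a connected reductive overgroup. For each $i$, choose a connected reductive classical group $G_i \supseteq H_i$: in cases \textbf{(1)}--\textbf{(5)}, take $G_i := H_i$; in case \textbf{(6)}, take $G_i$ to be the reductive group from \textbf{(1)}--\textbf{(5)} of which $H_i = U_{P_i}$ is the unipotent radical of a parabolic $P_i \subset G_i$. Let $G := \prod_i G_i$. Fix a Borel $B_i \subset G_i$ with maximal torus $T_i$ and unipotent radical $U_{B_i}$; set $U_i := U_{B_i}$ and $T_i' := T_i$ in cases \textbf{(1)}--\textbf{(5)}, and $U_i := U_{P_i} = H_i$, $T_i' := \{e\}$ in case \textbf{(6)}. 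Then $U := \prod_i U_i$ is the unipotent radical of a parabolic $P \subset G$, and $T := \prod_i T_i'$ is a torus. Provided $\Sym M^*$ is a good $G$-module, the identity $V^{G_i} = V^{B_i}$ on good $G_i$-modules yields
\[
(\Sym M^*)^H = \bigl((\Sym M^*)^U\bigr)^T.
\]

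The main technical input is the goodness of $\Sym M^*$ as a $G$-module. Decomposing $M^* \cong \bigoplus_{\alpha \in Q_1} M_{s(\alpha)}^* \otimes M_{t(\alpha)}$, one has $\Sym M^* \cong \bigotimes_{\alpha \in Q_1} \Sym(M_{s(\alpha)}^* \otimes M_{t(\alpha)})$ as $G$-modules, so by the stability of good filtrations under external tensor product for products of reductive groups, the problem reduces to verifying, for each pair $(i,j)$ with classical $G_i, G_j$ in cases \textbf{(1)}--\textbf{(5)}, that $\Sym(M_i^* \otimes M_j)$ carries a good $G_i \times G_j$-filtration. This is the principal obstacle: it is classical for type $A$ via the Cauchy decomposition and the theory of determinantal rings, and for the symplectic and orthogonal cases it follows from Donkin's results; the extensions to Levi and derived subgroups follow by restriction, using that restriction to such subgroups preserves good filtrations.

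Once goodness is in hand, Corollary~\ref{w-U_P.thm} gives that $(\Sym M^*)^U$ is a finitely generated strongly $F$-regular Gorenstein UFD; taking further $T$-invariants (with $T$ a torus, hence linearly reductive) presents $(\Sym M^*)^H$ as a direct summand subring, so it is finitely generated and strongly $F$-regular by Lemma~\ref{F-sing.thm}\textbf{(iii)}. In characteristic zero, Remark~\ref{zero.rem} and Boutot's theorem show that $(\Sym M^*)^U$ is a Gorenstein UFD with rational singularities, hence of strongly $F$-regular type by Hara's theorem \cite[(1.1), (5.2)]{Hara}; Lemma~\ref{SvdB.thm} then propagates this property to the $T$-invariants, yielding the desired conclusion.
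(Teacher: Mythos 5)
Your proof is correct and takes essentially the same route as the paper: reduce type~\textbf{(7)} via the linearly-reductive direct-summand argument and Lemma~\ref{SvdB.thm}, establish goodness of $\Sym M^*$ over the reductive overgroup $G=\prod_i G_i$ via Cauchy-type decompositions together with Mathieu/Donkin restriction results for Levi and derived subgroups, and apply Corollary~\ref{w-U_P.thm} (resp.\ Remark~\ref{zero.rem} in characteristic zero). The only difference is organizational: you split $(\Sym M^*)^H=\bigl((\Sym M^*)^U\bigr)^T$ in a single pass, whereas the paper reaches the type~\textbf{(6)} case by iterating the type~\textbf{(7)} reduction after replacing each reductive $H_i$ by a Borel subgroup; the underlying computation is the same.
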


\begin{proof}
If $H_i$ satisfies {\bf(7)} and the corresponding $N_i$ satisfies
{\bf(x)}, where $1\leq {\bf x}\leq 6$, then we say that $H_i$ is of type
{\bf (7,x)}.

Note that $\Sym M^*\cong \bigotimes_{\alpha\in Q_1}
\Sym(M_{s(\alpha)}\otimes M_{t(\alpha)}^*)$.

First we prove that $\Sym M^*$ has a good filtration as an $H$-module
if each $H_i$ is as in {\bf(1)}--{\bf(5)}.
To verify this, we only have to show that $\Sym(M_{s(\alpha)}\otimes
M_{t(\alpha)}^*)$ is a good $H$-module for each $\alpha$, by Mathieu's
tensor product theorem \cite{Mathieu}.
This module is trivial as an $H_i$-module if $i\neq s(\alpha),t(\alpha)$.
Thus it suffices to show that this is good as an $H_{s(\alpha)}\times
H_{t(\alpha)}$-module if $s(\alpha)\neq t(\alpha)$, and
as an $H_{s(\alpha)}$-module if $s(\alpha)=t(\alpha)$, 
see \cite[Lemma~4]{Hashimoto}.
By \cite[Lemma~3, {\bf 3}, {\bf 5}, {\bf 6}]{Hashimoto} and
\cite[(3.2.7), (3.4.3)]{Donkin}, the assertion is true
for $H_{s(\alpha)}$, $H_{t(\alpha)}$ of {\bf(1)}--{\bf(3)}.
By Mathieu's theorem \cite[Theorem~1]{Mathieu}, the groups of type {\bf(4)}
is also allowed.
By \cite[(3.2.7)]{Donkin} again, the groups of type {\bf (5)} is also 
allowed.
By \cite[Theorem~6]{Hashimoto}, the conclusion of the theorem holds
this case.

We consider the general case.
If $H_i$ is of the form {\bf (1)}--{\bf (5)}, then considering 
$N_i=U_i\subset B_i\subset H_i$, where $B_i$ is a Borel subgroup of $H_i$ and
$U_i$ its unipotent radical, $B_i$ is a group of the form {\bf (7)},
and as the $H_i$-invariant and the $B_i$-invariant are the same thing
for an $H_i$-module, we may replace $H_i$ by $B_i$ without changing 
the invariant subring.
Hence in this case, we may assume that $H_i$ is of the form {\bf (7,6)}.
Clearly, a group of the form {\bf (6)} is also of the form {\bf (7,6)},
letting $N_i=H_i$.
So we may assume that each $H_i$ is of type {\bf (7)}.
If $(\Sym M^*)^N$ is strongly $F$-regular (type), 
where $N=\prod_{i\in Q_0}N_i$,
then $(\Sym M^*)^H\cong ((\Sym M^*)^N)^{H/N}$ is also strongly $F$-regular
in positive characteristic,
since $H/N\cong \prod_{i\in Q_0}H_i/N_i$ is linearly reductive and
$(\Sym M^*)^H$ is a direct summand subring of $(\Sym M^*)^N$.
In characteristic zero, $(H/N)^\circ$ is a torus, and we can invoke
Lemma~\ref{SvdB.thm}.
Thus we may assume that each $H_i$ is of the form {\bf (1)--(6)}.
Then again by the argument above, we may assume that each $H_i$ is of the form
{\bf (7,6)}.
Again by the argument above, we may assume that each $H_i$ is of the form
{\bf (6)}.
Now suppose that $H_i\subset G_i\subset \GL(M_i)$, and 
each $G_i$ if of the form {\bf (1)--(5)}, and $H_i$ is the unipotent radical
of the parabolic subgroup $P_i$ of $G_i$.
Then letting $G:=\prod G_i$ and $P:=\prod P_i$, $H=\prod H_i$ is the
unipotent radical of the parabolic subgroup $P$ of $G$.
As $\Sym M^*$ has a good filtration as an $G$-module by the first paragraph,
$(\Sym M^*)^H$ is finitely generated and strongly $F$-regular (type)
by Corollary~\ref{w-U_P.thm} and Remark~\ref{zero.rem}.
\end{proof}

This covers Example~1 and Example~2 of \cite{Hashimoto},
except that we do not consider the case $p=2$ here, if $\Orth_n$ or $\SO_n$ is
involved.
For example,

\begin{example}
Let $Q=1\rightarrow 2\rightarrow 3$, $(d(1),d(2),d(3))=
(m,t,n)$, $H_1=H_3=\{e\}$, and $H_2=\GL_t$.
Then $M=\Hom(M_2,M_3)\times \Hom(M_1,M_2)$, and $M\rightarrow M\dq H$
is identified with
\[
\pi: M\rightarrow Y_t=\{f\in\Hom(M_1,M_3)\mid\rank f\leq t\},
\]
where $\pi(\varphi,\psi)=\varphi\psi$ (De Concini--Procesi 
\cite{DP}).
Thus (the coordinate ring of) $Y_t$ is strongly $F$-regular (type), as was
proved by Hochster--Huneke \cite[(7.14)]{HH4} ($F$-regularity and strong 
$F$-regularity are equivalent for positively graded rings, see 
Lemma~\ref{F-sing.thm}).
\end{example}

Next we consider an example which really requires a 
group of type {\bf (7)} in
Theorem~\ref{quiver.thm}.

Let $K$ be a field, and $M=K^m$, $N=K^n$.
Let $1\leq s\leq n$, and $\ua=(0=a_0<a_1<\cdots<a_s=n)$ be an increasing
sequence of integers.
Let $\fG$, $\fS$, and $\fT$ be disjoint subsets of $\{1,\ldots,s\}$
such that $\fG\coprod\fS\coprod\fT=\{1,\ldots,s\}$.
Let 
\[
H=H(\ua;\fG,\fS,\fT):=
\left(\,
\begin{array}{cccc}
\hline
\multicolumn{1}{|c}{H_1} &
\multicolumn{3}{c|}{}
\\
\cline{1-1}
 & \multicolumn{1}{|c}{H_2} &
\multicolumn{2}{c|}{~~~*}
\\
\cline{2-2}
 & & \multicolumn{1}{|c}{\ddots} &
\multicolumn{1}{c|}{}
\\
\cline{3-3}
 & O~~ & & \multicolumn{1}{|c|}{H_s} \\
\cline{4-4}
\end{array}
\,\right)
\subset\GL_m(K)\cong \GL(M),
\]
where $H_l$ is $\GL_{a_l-a_{l-1}}$ if $l\in \fG$, 
$\SL_{a_l-a_{l-1}}$ if $l\in \fS$, and
$\{E_{a_l-a_{l-1}}\}$ if $l\in \fT$.

Let us consider the symmetric algebra $S=\Sym (M\otimes N)$.
It is a graded polynomial algebra over $K$ with each variable degree one.
Let $e_1,\ldots,e_m$ and $f_1,\ldots,f_n$ be the 
standard bases of $M=K^m$ and $N=K^n$, respectively.
For sequences $1\leq c_1,\ldots,c_u\leq m$ and $1\leq d_1,\ldots,d_u\leq n$,
we define $[c_1,\ldots,c_u\mid d_1,\ldots,d_u]$ to be the determinant
$\det(e_{c_i}\otimes f_{d_j})_{1\leq i,j\leq u}$.
It is a minor of the matrix $(e_i\otimes f_j)$ up to sign, or zero.
Let $\Sigma$ be the set of minors
\begin{multline*}
\{[c_1,\ldots,c_u\mid d_1,\ldots,d_u]\mid
1\leq u\leq \min(m,n),\\
1\leq c_1<\cdots<c_u\leq m,\;
1\leq d_1<\cdots<d_u\leq n
\}.
\end{multline*}
We say that $[c_1,\ldots,c_u\mid d_1,\ldots,d_u]\leq
[c'_1,\ldots,c'_v\mid d'_1,\ldots,d'_v]$
if $u\geq v$, and $c'_i\geq c_i$ and $d'_i\geq d_i$ for $1\leq i\leq v$.
It is easy to see that $\Sigma$ is a distributive lattice.

Set $\epsilon:=\min \fG$.
For $1\leq l<\epsilon$, set
\[
\Gamma_l:=\{[1,\ldots,a_l\mid d_1,\ldots,d_{a_l}]\mid 1\leq d_1<\cdots<d_{a_l}
\leq n\}
\]
if $l\in\fS$, and
\begin{multline*}
\Gamma_l:=\{[c_1,\ldots,c_u\mid d_1,\ldots,d_u]\mid
a_{l-1}<u\leq a_l,\\
1\leq c_1<\cdots<c_u\leq a_l,\;
c_t=t\;(t\leq a_{l-1}),\;
1\leq d_1<\cdots<d_u\leq n\}
\end{multline*}
if $l\in \fT$.
Set $\Gamma=\bigcup_{l<\epsilon}\Gamma_l$.
Note that $\Gamma$ is a sublattice of $\Sigma$.

It is well-known that $S$ is an ASL on $\Sigma$ over $K$ \cite[(7.2.7)]{BH}.
For the definition of ASL, see \cite[(7.1)]{BH}.

\begin{lemma}
Let $B$ be a graded ASL on a poset $\Omega$ over a field $K$.
Let $\Xi$ be a subset of $\Omega$ such that for any two incomparable
elements $\xi,\eta\in\Xi$,
\begin{equation}\label{straightening.eq}
\xi\eta=\sum c_i m_i
\end{equation}
in $S$ with each $m_i$ in the right hand side being a monomial of 
$\Xi$ divisible by an element $\xi_i$ in $\Xi$ smaller than both $\xi$
and $\eta$.
Then the subalgebra $K[\Xi]$ of $B$ is a graded ASL on $\Xi$.
\end{lemma}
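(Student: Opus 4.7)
Plan. I will verify the three defining properties of a graded ASL for the pair $(K[\Xi],\Xi)$, using those of $(B,\Omega)$. A standard monomial on $\Xi$ (a product along a chain in $\Xi$) is also a standard monomial on $\Omega$; hence standard monomials on $\Xi$ are $K$-linearly independent in $B$ and thus in $K[\Xi]$, giving the linear-independence axiom at once.

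The main work is to show that standard monomials on $\Xi$ span $K[\Xi]$. Since $\Xi$ generates $K[\Xi]$ over $K$, it suffices to reduce each monomial $\mu$ in $\Xi$ to a $K$-linear combination of standard monomials on $\Xi$. The plan is a straightening algorithm: whenever $\mu$ has two incomparable factors $\xi,\eta$, apply the hypothesis (\ref{straightening.eq}) to rewrite $\xi\eta=\sum c_i m_i$, producing $\mu$ as a $K$-linear combination of monomials in $\Xi$, and iterate on each summand.

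The principal obstacle is termination of this algorithm; I would handle it with a well-founded monovariant. Fix a linear extension $\preceq$ of $<$ on $\Omega$, and for a monomial $\mu$ let $s_\mu\colon\Omega\to\Bbb N$ be its multiplicity function. Order such functions by declaring $s\prec s'$ iff at the $\preceq$-smallest $\omega$ with $s(\omega)\neq s'(\omega)$ one has $s(\omega)>s'(\omega)$. Under the step $\mu_0\xi\eta\to\mu_0 m_i$, the multiplicity changes by $\Delta=s_{m_i}-1_\xi-1_\eta$; since $m_i$ has a factor $\xi_i$ with $\xi_i<\xi$ and $\xi_i<\eta$, the $\preceq$-smallest support element $\omega_0$ of $m_i$ satisfies $\omega_0\preceq\xi_i\prec\xi$ and $\omega_0\prec\eta$, so $\Delta(\omega_0)=s_{m_i}(\omega_0)>0$ while $\Delta(\omega)=0$ for $\omega\prec\omega_0$. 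Hence the new monomial is strictly smaller in $\prec$. In each fixed homogeneous component of $B$, positivity of the grading bounds the total multiplicity and the set of possible factors to a finite set, so $\prec$ is well-founded there and the algorithm terminates.

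Finally, once spanning is known, the straightening axiom on $(K[\Xi],\Xi)$ is immediate from uniqueness of standard expansions in $B$: the algorithm applied to $\xi\eta$ produces a standard expansion on $\Xi$ which must coincide with the standard expansion of $\xi\eta$ in $B$, and every summand in it contains a factor $\leq\xi_i<\xi,\eta$, since no straightening step introduces a factor larger than those already present in the sense of the above monovariant argument. The grading on $K[\Xi]$ is inherited from that of $B$.
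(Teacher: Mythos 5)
Your proof is correct and follows the same overall strategy as the paper's: verify the ASL axioms directly, reducing the spanning axiom to termination of the iterated straightening procedure via a monovariant, and then obtain $(H_1)$ and $(H_2)$ by comparing with the unique standard expansion in $B$. The only substantive difference is the choice of monovariant: the paper uses the integer weight $w(m)=\sum_\omega c(\omega)\,3^{\coht(\omega)}$ (with $\coht$ the coheight in $\Omega$), which strictly \emph{increases} at each rewriting step and is bounded within a homogeneous component, while you use a lexicographic order on multiplicity functions along a fixed linear extension of $<$; both monovariants encode the same phenomenon (the rewrite trades $\xi,\eta$ for a monomial containing a strictly smaller element) and both work. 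One small point that the paper handles explicitly but you leave implicit: the relation (\ref{straightening.eq}) is only given as an equality in $B$, not as a homogeneous one, so before running the algorithm one should project to the degree-$\deg(\xi\eta)$ component and discard the $m_i$ of other degrees (the factor condition survives this truncation); without this normalization the rewriting need not stay in a single homogeneous component, which your finiteness/well-foundedness argument requires.
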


\begin{proof}
We may assume that $m_i$ in the right hand side of 
(\ref{straightening.eq}) has the same degree as that of $\xi\eta$ for
each $\xi$, $\eta$, and $m_i$.
For a monomial $m=\prod_{\omega\in\Omega}\omega^{c(\omega)}$,
the weight $w(m)$ 
of $m$ is defined to be $\sum_\omega c(\omega)3^{\coht(\omega)}$,
where $\coht(\omega)$ is the maximum of the lengths of chains 
$\omega=\omega_0<\omega_1<\cdots$ in $\Omega$.
Then $w(mm')=w(m)+w(m')$, and for each $i$, $w(m_i)>w(\xi\eta)$ in 
(\ref{straightening.eq}).
So each time we use (\ref{straightening.eq}) to rewrite a monomial, 
the weight goes up.
On the other hand, there are only finitely many monomials of a given degree,
this rewriting procedure will stop eventually, and we get a linear combination
of standard monomials in $\Xi$.
Now $(H_2)$ condition in \cite[(7.1)]{BH} is clear, while
$(H_0)$ and $(H_1)$ are trivial.
\end{proof}

We call $K[\Xi]$ a {\em subASL} of $B$ generated by $\Xi$ 
if the assumption of the lemma is satisfied.

\begin{theorem}
Let the notation be as above.
Let $H$ act on $S$ via $h(m\otimes n)=h(m)\otimes n$.
Set $A:=S^H$.
Then
\begin{description}
\item[(1)] $A=K[\Gamma]$.
\item[(2)] $K[\Gamma]$ is a subASL of $S=K[\Sigma]$ generated by $\Gamma$.
\item[(3)] $A$ is a Gorenstein UFD.
It is strongly $F$-regular if the characteristic of $K$ is positive, and
is of strongly $F$-regular type if the characteristic of $K$ is zero.
\end{description}
\end{theorem}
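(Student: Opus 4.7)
I treat the three claims in sequence, since (1) provides the concrete description that drives (2) and (3).

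For (1), decompose $H = L_H \ltimes U_P$ where $L_H = \prod_l H_l$ and $U_P$ is the unipotent radical of the standard parabolic $P \subset \GL_m$ with Levi $L = \prod_l \GL_{r_l}$ (writing $r_l := a_l - a_{l-1}$), so that $A = (S^{U_P})^{L_H}$. Classical unipotent invariant theory (or Theorem~\ref{grosshans.thm} applied to the standard $\GL_m$-module $M$) identifies $S^{U_P}$ as the $K$-subalgebra of $S$ generated by the partial leading minors $[1, \ldots, a_{l-1}, c_1, \ldots, c_u \mid d_1, \ldots, d_{a_{l-1}+u}]$ with $a_{l-1} < c_1 < \cdots < c_u \leq a_l$. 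Such a minor transforms under $L_H$ as a semi-invariant: it is scaled by $(\det H_{l'})^{r_{l'}}$ on each earlier block $l' < l$ and by an exterior-power character of the standard $H_l$-module on block $l$. Imposing $L_H$-invariance then kills all contributions from blocks $\geq \epsilon := \min\fG$ (since $\det H_\epsilon$ is a nontrivial character of $\GL_{r_\epsilon}$), forces $u \in \{0, r_l\}$ for $l \in \fS$ with $l < \epsilon$, and places no restriction for $l \in \fT$ with $l < \epsilon$. The surviving minors are precisely $\Gamma$.

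For (2), $K[\Sigma]$ is a classical graded ASL by \cite[(7.2.7)]{BH}. To invoke the preceding lemma, for incomparable $\xi, \eta \in \Gamma$ the Pl\"ucker-style straightening $\xi\eta = \sum c_i \mu_i \nu_i$ in $K[\Sigma]$ has each ``intersection minor'' $\mu_i$ with row and column sets determined as intersections of those of $\xi$ and $\eta$; a direct combinatorial check using the shape of $\Gamma_l$ shows that each such $\mu_i$ lies in $\Gamma$, establishing the subASL property. Cohen--Macaulayness of $K[\Gamma]$ is then immediate from Hibi's theorem for ASLs on the distributive lattice $\Gamma$. For UFD, $S^{U_P}$ is a UFD by Corollary~\ref{w-U_P.thm} applied to $G = \GL_m \times \GL_n$ and the parabolic $P \times \GL_n$; then $A = (S^{U_P})^{L_H}$ inherits UFD by Popov's criterion (Remark~3 after \cite[Proposition~2]{Popov3}, as used in Lemma~\ref{UFD.thm}): the derived $L_H^{\mathrm{der}} = \prod_{l \in \fG \cup \fS} \SL_{r_l}$ is connected and character-free, while the torus quotient $L_H/L_H^{\mathrm{der}} = \prod_{l \in \fG} \Bbb G_m$ has each character realized by a $\fG$-block leading semi-invariant in $S^{U_P}$. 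Combined with Cohen--Macaulay, Murthy's theorem yields Gorenstein.

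The main obstacle is strong $F$-regularity in positive characteristic. Given Gorenstein, by Lemma~\ref{F-sing.thm}, {\bf (viii)}, it suffices to show $A$ is $F$-rational. My plan is to apply Corollary~\ref{main-cor1.thm} and Proposition~\ref{U_P-main.thm} in iterated form, starting from $S^{U_P}$ (Gorenstein and $F$-rational by Corollary~\ref{w-U_P.thm}) with its good $(L \times \GL_n)$-module structure (Lemma~\ref{U_P-good.thm}): pass to $L_H^{\mathrm{der}}$-invariants factor by factor, each $\SL_{r_l}$-invariant step (for $l \in \fG \cup \fS$) accessed via Borel-unipotent invariants using Corollary~\ref{main-cor1.thm} (requiring an $(L \times \GL_n)$-$F$-purity of $S^{U_P}$ inherited from the $(\GL_m \times \GL_n)$-strong $F$-regularity of $S$ of Theorem~\ref{good-filtration.thm}) followed by torus invariants (preserving $F$-regularity via Lemma~\ref{F-sing.thm}, {\bf (iii)}); then the remaining torus $L_H/L_H^{\mathrm{der}} = \prod_{l \in \fG} \Bbb G_m$ preserves $F$-rationality by the same lemma. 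In characteristic zero, strongly $F$-regular type follows as in Remark~\ref{zero.rem}, using Boutot's theorem on rational singularities of invariants together with \cite[(1.1), (5.2)]{Hara}.
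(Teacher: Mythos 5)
Your proof of (1) and your proof of strong $F$-regularity both diverge from the paper's and both contain gaps that would need serious work to close.

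For (1), the paper gives a rigorous argument: it shows $\Gamma\subset A$, then proves $\dim_K A_d=\dim_K K[\Gamma]_d$ by computing $\dim_K S_d^H$ from the Cauchy filtration of $S_d$ and Lemma~\ref{sublemma.thm} (using Donkin's exactness of $(?)^{U_{P^+}}$ on good modules) and matching it against a count of standard monomials via the ABW standard basis theorem. Your argument instead asserts that $S^{U_{P^+}}$ is algebra-generated by partial leading minors and that passing to $L_H$-invariants simply selects the invariant generators. The second step is where the gap is: a partial leading minor with $0<u<r_l$ rows in block $l$ is not a semi-invariant for $H_l=\GL$ or $\SL$ (it sits in $\ext^u M_l$, not a one-dimensional weight space), and more fundamentally, even once you know the $U_{P^+}$-invariants are generated by these minors, the $L_H$-invariant subalgebra of a ring generated by semi-invariants need not itself be generated by the $L_H$-invariant ones among the generators. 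Your argument correctly identifies the constraints that single out $\Gamma$, but it does not prove $A=K[\Gamma]$; you still need the dimension count (or an equivalent representation-theoretic exactness argument) that the paper supplies.

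For strong $F$-regularity, the paper observes $A=S^{B_H^+}$ and applies Theorem~\ref{quiver.thm} in a single step, noting that $B_H^+$ is of type \textbf{(7)} with $N_1=U_H^+$ the unipotent radical of a parabolic of $\GL_m$ and $B_H^+/U_H^+$ a torus. Your plan instead iterates Corollary~\ref{main-cor1.thm} and Proposition~\ref{U_P-main.thm} through the factors of $L_H$, but this requires knowing at each stage that the current invariant ring is $G$-$F$-pure for the relevant group (e.g. $(L\times\GL_n)$-$F$-purity of $S^{U_{P^+}}$). You say this is ``inherited'' from the $(\GL_m\times\GL_n)$-strong $F$-regularity of $S$, but nothing in the paper establishes such a transfer, and it does not follow from the definitions: $G$-$F$-purity is a condition on Steinberg-twisted splittings relative to the specific group $G$, and the paper never proves that $(?)^{U_P}$ preserves it. The correct move — and the one the paper already set up — is to avoid any intermediate ring and apply Corollary~\ref{w-U_P.thm} in one shot to the unipotent radical $U_H^+$, which is precisely what Theorem~\ref{quiver.thm} packages. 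Similarly, for the UFD claim you bypass (1), but then the $\fG$-blocks of $L_H$ contribute nontrivial characters, so Popov's unipotent criterion does not apply directly and you are forced into a more delicate realized-characters argument for the torus quotient; the paper's use of (1) to replace $H$ by the character-free group $H'=H\cap\GL(M')$ and then cite \cite[(4.28)]{UFD} is both simpler and actually complete.
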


\begin{proof}
First we prove that $A$ is strongly $F$-regular (type).
To do so, we may assume that $K=k$ is algebraically closed.
Let $B^+$ be the subgroup of upper triangular matrices in $\GL_m$,
and set $B_H^+:=B^+\cap H$.
Then it is easy to see that $A=S^{B_H^+}$.

Now let $Q$ be the quiver $1\rightarrow 2$, $d=(d(1),d(2))=(m,n)$, 
$G_1=B_H^+\subset \GL_m$, and $G_2=\{e\}$.
Let $U_H^+$ be the unipotent radical of $B_H^+$.
Then $U_H^+$ is the unipotent radical of an appropriate parabolic subgroup
of $\GL_m$, $U_H^+$ is normal in $B_H^+$, and $B_H^+/U_H^+$ is a torus.
Thus the assumption {\bf(7)} of Theorem~\ref{quiver.thm} is satisfied,
and thus $A=S^{B_H^+}$ is strongly $F$-regular (type).

The assertion {\bf(2)} is a consequence of the straightening relation
of the ASL $S$.
See \cite{ABW} for details.

Assume that {\bf (1)} is proved.
Then by the definition of $\Gamma$, letting $M'$ be the subspace of
$M$ spanned by $e_1,\ldots,e_{a_{\epsilon-1}}$, 
$A=\Sym(M'\otimes N)^{H'}$, where $H'=H\cap \GL(M')$,
by {\bf (1)} again
($\GL(M')$ is 
viewed as a subgroup of $\GL(M)$ via $g'(e_i)=e_i$ for $i>a_{\epsilon-1}$).
As $H'$ is connected and 
$\bar K\otimes_K H'$ does not have a nontrivial character, $A$ is a UFD by
\cite[(4.28)]{UFD}, where $\bar K$ is the algebraic closure of $K$.
So assuming {\bf (1)}, the assertion {\bf(3)} is proved.

It remains to prove {\bf (1)}.
It is easy to see that $\Gamma\subset A$.
So it suffices to prove that $\dim_K A_d= \dim_K K[\Gamma]_d$ for each
degree $d\geq 0$.
To do so, we may assume that $K$ is algebraically closed.

Let $P^+$ be the parabolic subgroup $H(\ua;\{1,\ldots,s\},\emptyset,\emptyset)$
of $\GL_m$, and $U_{P^+}$ the unipotent radical of $P^+$.
If 
\[
0\rightarrow M_1\rightarrow M_2\rightarrow M_3\rightarrow 0
\]
is a short exact sequence of good $\GL(M)\times \GL(N)$-modules,
then 
\begin{equation}\label{U_P.eq}
0\rightarrow M_1^{U_{P^+}}\rightarrow 
M_2^{U_{P^+}}\rightarrow M_3^{U_{P^+}}\rightarrow 0
\end{equation}
is an exact sequence of good $P^+/U_{P^+}$-modules by
Lemma~\ref{U_P-good.thm} and Theorem~\ref{Donkin2.thm}.
Note that $P^+/U_{P^+}$ is identified with $\prod_{l=1}^s \GL_{a_l-a_{l-1}}$,
and $H/U_{P^+}$ is identified with its subgroup $\prod_{l=1}^s H_l$.
As each $H_l$ is either $\GL_{a_l-a_{l-1}}$, $\SL_{a_l-a_{l-1}}$, or trivial,
it follows that a good $P^+/U_{P^+}$-module is also good as an 
$H/U_{P^+}$-module.
Applying the invariance functor $(?)^{H/U_{P^+}}$ to (\ref{U_P.eq}), 
\[
0\rightarrow M_1^{H}\rightarrow M_2^{H}\rightarrow M_3^{H}\rightarrow 0
\]
is exact.

Now we employ the standard convention for $\GL(M)$.
Let $T$ be the set of diagonal matrices in $G:=\GL(M)=\GL_m$, 
and we identify $X(T)$ with $\Bbb Z^m$ by the isomorphism
\[
\Bbb Z^m\ni(\lambda_1,\lambda_2,\ldots,\lambda_m)\mapsto
\left(
\begin{pmatrix}
t_1 \\
 & t_2 \\
 & & \ddots \\
 & & & t_m
\end{pmatrix}
\mapsto t^\lambda=t_1^{\lambda_1}t_2^{\lambda_2}\cdots t_m^{\lambda_m}
\right)
\in X(T).
\]
We fix the base of the root system of $\GL(M)$ so that the set of
lower triangular matrices in $\GL(M)$ is negative.
Then the set of dominant weights $X^+_{\GL(M)}$ is the set
\[
\{\lambda=(\lambda_1,\ldots,\lambda_m)\in X(T)\mid \lambda_1\geq\cdots
\geq \lambda_m\}.
\]
We use a similar convention for $\GL(N)$.
See \cite[(II.1.21)]{Jantzen} for more information on this convention.

For $\lambda\in X^+_{\GL(M)}$, $ \nabla_{\GL(M)}(\lambda)^{U_{P^+}}$
is a single dual Weyl module by Theorem~\ref{Donkin.thm}.
But
obviously, the highest weight of $ \nabla_{\GL(M)}(\lambda)^{U_{P^+}}$
is $\lambda$.
Thus $ \nabla_{\GL(M)}(\lambda)^{U_{P^+}}\cong \nabla_{P^+/U_{P^+}}(\lambda)$.
Now the following is easy to verify:

\begin{lemma}\label{sublemma.thm}
For $\lambda=(\lambda_1,\ldots,\lambda_m)\in X^+_{\GL(M)}$,
\[
\nabla_{\GL(M)}(\lambda)^H
\cong
\begin{cases}
\nabla_{\GL_{a_1}}(\lambda(1))\otimes\cdots\otimes\nabla_{\GL_{a_s-a_{s-1}}}
(\lambda(s)) & (\lambda\in\Theta) \\
0 & (\text{otherwise})
\end{cases}
\]
as $P^+/H$-modules, where 
$\lambda(l):=(\lambda_{a_{l-1}+1},\ldots,\lambda_{a_l})$ for each $l$, and
$\Theta$ is the subset of $X^+_{\GL(M)}$ consisting of 
sequences $\lambda=(\lambda_1,\ldots,\lambda_m)$ such that
$\lambda(l)=(0,0,\ldots,0)$ for each $l\in\fG$, and
$\lambda(l)=(t,t,\ldots,t)$ for some $t\in\Bbb Z$ for each $l\in\fS$.
\end{lemma}

Let $r:=\min(m,n)$, and set 
\[
\Cal P(d)=\{\lambda=(\lambda_1,\ldots,\lambda_r)\in\Bbb Z^r\mid
\lambda_1\geq \cdots\geq \lambda_r\geq 0,\; |\lambda|=d\},
\]
where $|\lambda|=\lambda_1+\lambda_2+\cdots+\lambda_r$.
We consider that 
\[
(\lambda_1,\ldots,\lambda_r)=
(\lambda_1,\ldots,\lambda_r,0,\ldots,0), 
\]
and $\Cal P(d)\subset X^+_{\GL(M)}$.
Similarly, we also consider that $\Cal P(d)\subset X^+_{\GL(N)}$.
By the Cauchy formula \cite[(III.1.4)]{ABW}, 
$S_d$ has a good filtration as a $\GL(M)\times\GL(N)$-module 
whose associated graded object is 
\[
\bigoplus_{\lambda\in\Cal P(d)}
\nabla_{\GL(M)}(\lambda)\boxtimes\nabla_{\GL(N)}(\lambda).
\]
Note that $\nabla_{\GL(M)}(\lambda)$ is isomorphic to the Schur module
$L_{\tilde\lambda}M$ in \cite{ABW}, where $\tilde\lambda$ is the 
transpose of $\lambda$.
That is, 
$\tilde\lambda=(\tilde\lambda_1,
\tilde\lambda_2,\ldots)$ is given by 
$\tilde\lambda_i=\#\{j\geq 1 \mid \lambda_j\geq i\}$.

By Lemma~\ref{sublemma.thm}, 
$S_d^H$ has a filtration whose associated graded object is
\[
\bigoplus_{\lambda\in\Cal P(d)\cap\Theta}\nabla_{\GL_{a_1}}(\lambda(1))
\otimes\cdots\otimes\nabla_{\GL_{a_s-a_{s-1}}}(\lambda(s))\boxtimes
\nabla_{\GL(N)}(\lambda).
\]
In particular, 
\begin{equation}\label{dimen.eq}
\dim S_d^H
=
\sum_{\lambda\in\Cal P(d)\cap \Theta}
\dim\nabla_{\GL(N)}(\lambda)\prod_l\dim \nabla_{\GL_{a_l-a_{l-1}}}(\lambda(l)).
\end{equation}

Next we count the dimension of $K[\Gamma]_d$.
This is the number of standard monomials of degree $d$ in $K[\Gamma]$.
For a standard monomial
\[
v=\prod_{b=1}^\alpha [c_{b,1},\ldots,c_{b,\mu_b}\mid 
d_{b,1},\ldots,d_{b,\mu_b}]
\]
(where $[c_{b,1},\ldots,c_{b,\mu_b}\mid d_{b,1},\ldots,d_{b,\mu_b}]$
increses when $b$ increases) in $\Sigma$,
we define $\mu(v)=(\mu_1,\ldots,\mu_\alpha)$, and $\lambda(v)$ its transpose.
Such a standard monomial $v$ of $\Gamma$ of degree $d$ 
exists if and only if 
$\lambda(v)\in\Theta\cap\Cal P(d)$.

For a standard monomial $v$ of $\Sigma$ such that $\lambda(v)=\lambda
\in\Cal P(d)\cap \Theta$,
$v$ is a monomial of $\Gamma$ if and only if the following condition holds.
For each $1\leq b\leq \lambda_1$, $1\leq l\leq s$, and 
each $a_{l-1}<i\leq a_l$, it holds $a_{l-1}<c_{s,i}\leq a_l$.
The number of such monomials agrees with
$\dim\nabla_{\GL(N)}(\lambda)\prod_l\dim\nabla_{\GL_{a_l-a_{l-1}}}
(\lambda(l))$,
as can be seen easily from the standard basis theorem \cite[(II.2.16)]{ABW}.
So $\dim_K K[\Gamma]$ agrees with the right hand side of (\ref{dimen.eq}), 
and we have $\dim_K A_d=\dim_K S_d^H=\dim_K K[\Gamma]_d$, as desired.
\end{proof}

\begin{remark}
The case that $s=2$, $a_1=l$, $\fG=\emptyset$, $\fS=\{2\}$, and
$\fT=\{1\}$ is studied by Goto--Hayasaka--Kurano--Nakamura \cite{GHKN}.
Gorenstein property and factoriality are proved there for this case.
The case that $s=m$, $a_l=l$ ($l=1,\ldots,m$), $\fG=\fS=\emptyset$, 
and $\fT=\{1,\ldots,m\}$ is a very special case of the study of 
Miyazaki \cite{Miyazaki}.
\end{remark}

\section{Openness of good locus}

\paragraph\label{reductive-ring.par}
Let $R$ be a Noetherian commutative ring, and $G$ a split
reductive group over $R$.
We fix a split maximal torus $T$ of $G$ whose embedding
into $G$ is defined over $\Bbb Z$.
We fix a base $\Delta$ of the root system, and let $B$ be the
negative Borel subgroup.
For a dominant weight $\lambda$, the dual Weyl module
$\nabla_G(\lambda)$ is defined to be $\ind_B^G(\lambda)$,
and the Weyl module $\Delta_G(\lambda)$ is defined to be
$\nabla_G(-w_0\lambda)^*$.

A $G$-module $M$ is said to be {\em good} if 
$\Ext^1_G(\Delta_G(\lambda),M)=0$ for any $\lambda
\in X^+$, where $X^+$ is the set of dominant weights,
see \cite[(III.2.3.8)]{Hashimoto5}.

\begin{lemma}
The notion of goodness of a $G$-module $M$ is independent
of the choice of $T$ or $\Delta$, and depends only on
$M$.
\end{lemma}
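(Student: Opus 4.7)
The plan is to reduce the claim to the general fact that an inner automorphism of $G$ preserves every isomorphism class of $G$-modules: if $g \in G(R)$ and $\iota_g : G \to G$ is conjugation by $g$, then for any $G$-module $M$ the twisted module $\iota_g^* M$ is isomorphic to $M$ via $m \mapsto g \cdot m$. Combining this with the fact that any two split maximal tori, and any two bases of the root system, are related by inner automorphisms, will show that the set of isomorphism classes $\{[\Delta_G(\lambda)] : \lambda \in X^+\}$ is the same for every choice of $(T,\Delta)$, whence so is the Ext-vanishing condition defining goodness.

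First I would fix $T$ and vary $\Delta$. Any two bases $\Delta, \Delta'$ of the root system of $(G,T)$ are related by a unique Weyl group element $w \in W = N_G(T)/T$, and since $G$ is split reductive over $R$ one has a lift $n_w \in N_G(T)(R)$ (for a split reductive group scheme, such lifts are provided by the Chevalley structure). Conjugation $\iota_{n_w}$ is inner, so $\iota_{n_w}^* M \cong M$ for every $G$-module $M$. Under $\iota_{n_w}^*$ the negative Borel $B'$ associated to $\Delta'$ is carried to the negative Borel $B$ associated to $\Delta$, and by the definition of induction one checks
\[
\iota_{n_w}^* \nabla_G^{(B')}(\lambda) \cong \nabla_G^{(B)}(w^{-1}\lambda),
\]
with $w^{-1}\lambda$ being dominant for $\Delta$ iff $\lambda$ is dominant for $\Delta'$; dualising gives the analogous statement for the Weyl modules $\Delta_G(\lambda)$. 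Hence the sets of isomorphism classes of Weyl modules coincide, and the Ext-vanishing defining goodness is the same for both bases.

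Next I would vary $T$. Two split maximal tori of $G$ are conjugate by some $g \in G(R)$ (fppf-locally on $\Spec R$, and in fact over $R$ itself under the splitness assumption); fixing $\Delta$ for $T$ and taking $\Delta' = g \cdot \Delta$ for $T' = gTg^{-1}$, the same argument with $\iota_g$ in place of $\iota_{n_w}$ yields a bijection of isomorphism classes of Weyl modules between the two choices, and the case of an arbitrary $\Delta'$ is then handled by the first step applied to $T'$. Combining the two steps finishes the proof.

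The main technical point to check is that the needed conjugations really come from elements of $G(R)$, or at least that the argument can be run fppf-locally on $\Spec R$. For split reductive group schemes the lifts $n_w$ of Weyl group elements and the conjugacy of split maximal tori over $R$ are standard; if one prefers to sidestep them, one can pass to an fppf cover $\tilde R \to R$, observe that goodness of $M$ is preserved under and reflected by faithfully flat base change (because the Weyl modules $\Delta_G(\lambda)$ are $R$-flat and $\Ext^1$ commutes with flat base change in the relevant degree), and then apply the inner-automorphism argument over $\tilde R$.
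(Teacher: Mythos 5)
Your proof is correct but takes a genuinely different route from the paper's. The paper first proves the claim over an algebraically closed field (where conjugacy of Borel subgroups gives the result immediately), then over an arbitrary field by base change to the algebraic closure, and then treats the general Noetherian base in two stages: for $R$-finite $R$-projective $M$ it reduces to the fibers (invoking \cite[(III.4.1.8)]{Hashimoto5}), and for arbitrary $M$ it replaces the Ext-vanishing criterion by an equivalent \emph{filtration} characterization --- $M$ is good iff it has an exhaustive filtration with factors of the form $N_i\otimes V_i$ with $N_i$ finite projective good --- which is manifestly independent of $(T,\Delta)$ once the finite projective case is settled. You instead argue uniformly for all $M$ by exhibiting, for any two choices $(T,\Delta)$ and $(T',\Delta')$, an inner automorphism $\iota_g$ (defined over $R$ or at worst over an fppf cover) that carries the family of Weyl modules for one choice to that for the other and fixes every $M$ up to isomorphism via $m\mapsto gm$; then $\Ext^1_G(\Delta_G(\lambda),M)$ is visibly preserved. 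Your approach buys uniformity and avoids the filtration characterization entirely, at the cost of structure-theoretic input about split reductive group schemes over a general base. On that point, be a bit careful: two split maximal tori of a split reductive group scheme over a general base $R$ need \emph{not} be conjugate by an element of $G(R)$ --- they are conjugate only after a faithfully flat (indeed \'etale) base change --- so your "over $R$ itself under the splitness assumption" is an overclaim, and your fppf fallback is not optional but required in the torus-varying step. That fallback does work, using that goodness descends and ascends along faithfully flat base change, a fact the paper itself records from \cite[(I.3.6.20)]{Hashimoto5} and \cite[(III.1.4.8)]{Hashimoto5}, so the overall argument is sound.
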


\begin{proof}
Let $T'$ and $\Delta'$ be another choice of a split
maximal torus defined over $\Bbb Z$ and a base of the
root system (with respect to $T'$).
Let $B'$ be the corresponding negative Borel subgroup.

Assume that $R$ is an algebraically closed field.
Then there exists some $g\in G(R)$ such that
$gBg^{-1}=B'$.
So $\ind_B^G\lambda\cong \ind_{B'}^G(\lambda')$ for any $\lambda\in X(B)$,
where $\lambda'$ is the composite
\[
B'\xrightarrow{b'\mapsto g^{-1}b'g} B\xrightarrow{\lambda}\Bbb G_m.
\]
So this case is clear.

When $R$ is a field, then
a $G$-module $M$ is good if and only if $\bar R\otimes_R M$ is
so as an $\bar R\otimes_R G$-module, and this notion is
independent of the choice of $B$, where $\bar R$ is the
algebraic closure of $R$.

Now consider the general case.
If $M$ is $R$-finite $R$-projective, then the assertion follows
from \cite[(III.4.1.8)]{Hashimoto5} and the discussion above.
If $M$ is general, then $M$ is good if and only if
there exists some filtration
\[
0=M_0\subset M_1\subset M_2\subset \cdots
\]
of $M$ such that $\bigcup_i M_i=M$, and for each
$i\geq 1$, $M_i/M_{i-1}\cong N_i\otimes V_i$ for
some $R$-finite $R$-projective good $G$-module $N_i$
and an $R$-module $V_i$.
Indeed, the only if part is \cite[(III.2.3.8)]{Hashimoto5},
while the if part is a consequence of the goodness of
$N_i\otimes V_i$, see \cite[(III.4.1.8)]{Hashimoto5}.
This notion is independent of the choice of $T$ or $\Delta$,
and we are done.
\end{proof}

Note that if $R\rightarrow R'$ is a Noetherian $R$-algebra,
then an $R'\otimes_R G$-module $M'$ is good if and only
if it is so as a $G$-module.
This comes from the isomorphism
\[
\Ext_G^i(\Delta_G(\lambda),M')\cong \Ext_{R'\otimes G}^i(
\Delta_{R'\otimes G}(\lambda),M').
\]

If
$M$ is a good $G$-module, and $R'$ is $R$-flat or $M$
is $R$-finite $R$-projective, then
$R'\otimes_R M$ is a good $R'\otimes_R G$-module
by \cite[(I.3.6.20)]{Hashimoto5} and
\cite[(III.1.4.8)]{Hashimoto5}, see \cite[(III.2.3.15)]{Hashimoto5}.
If $M$ is good and $V$ is a flat $R$-module,
then $M\otimes V$ is good.
This follows from the canonical isomorphism
\[
\Ext^i_G(\Delta_G(\lambda),M\otimes V)\cong
\Ext^i_G(\Delta_G(\lambda),M)\otimes V,
\]
see \cite[(I.3.6.16)]{Hashimoto5}.

If $R'$ is faithfully flat over $R$ and $R'\otimes_R M$
is good, then $M$ is good by \cite[(I.3.6.20)]{Hashimoto5}.

\paragraph\label{reductive-scheme.par}
Let $S$ be a scheme, and $G$ a reductive group scheme
over $S$, and $X$ a Noetherian $S$-scheme on which $G$
acts trivially.
Let $M$ be a quasi-coherent $(G,\Cal O_X)$-module.
For $(G,\Cal O_X)$-modules, see \cite[Chapter~29]{Hashimoto7}.
Almost by definition, a $(G,\Cal O_X)$-module and a
$(G\times_S X,\Cal O_X)$-module (note that $G\times_S X$ is an $X$-group
scheme) are the same thing.

We say that $M$ is good if there is a Noetherian commutative ring
$R$ and a faithfully flat morphism of finite type
$f:\Spec R\rightarrow X$ such that $G_R:=\Spec R\times_S G$
is a split reductive group scheme over $R$, 
and $\Gamma(\Spec R,f^*M)$ is a good $G_R$-module.
This notion is independent of the choice of $f$ such that
$G_R$ is split reductive.
When $X=\Spec B$ is affine, then we also say that $\Gamma(X,M)$ is
good, if $M$ is good.
If $g:X'\rightarrow X$ is a flat morphism of Noetherian schemes
and $M$ is a good quasi-coherent $(G,\O_X)$-module, then
$g^*M$ is good.
If $M$ is a quasi-coherent $(G,\O_X)$-module, $g$ is faithfully
flat, and $g^*M$ is good, then $M$ is good.

For a quasi-coherent $(G,\Cal O_X)$-module $M$, we define the
good locus of $M$ to be
\[
\Good(M)=\{x\in X\mid \text{$M_x$ is a good 
$(\Spec \O_{X,x}\times_S G)$-module}
\}.
\]
If $g:X'\rightarrow X$ is a flat morphism of Noetherian schemes,
then $g^{-1}(\Good(M))=\Good(g^*M)$.
If $X=\Spec R$ is affine, then for a $(G,R)$-module $N$, $\Good(N)$ stands
for $\Good(\tilde N)$, where $\tilde N$ is the sheaf associated with $N$.

\paragraph\label{split-reductive.par}
Let the notation be as in (\ref{reductive-ring.par}).

For a poset ideal $\pi$ of $X^+$ and a $G$-module $M$, we say that
$M$ belongs to $\pi$ if $M_\lambda=0$ for $\lambda\in X^+\setminus\pi$.

\begin{proposition}\label{belonging.thm}
Let $\pi$ be a poset ideal of $X^+$ and $M$ a $G$-module.
Then the following are equivalent.
\begin{description}
\item[(1)] $M$ belongs to $\pi$.
\item[(2)] For any $R$-finite 
subquotient $N$ of $M$ and any $R$-algebra 
$K$ that is a field, $K\otimes_R N$ belongs to $\pi$.
\item[(3)] For any $R$-finite subquotient $N$ of $M$, any $R$-algebra
$K$ that is a field, and $\lambda\in X^+\setminus \pi$, 
$\Hom_G(\Delta_G(\lambda),K\otimes_R N)=0$.
\item[(4)] For any $\lambda\in X^+\setminus\pi$, 
$\Hom_G(\Delta_G(\lambda),M)=0$.
\item[(5)] $M$ is a $C_\pi$-comodule, where $C_\pi$ is the Donkin
subcoalgebra of $C$ with respect to $\pi$, see 
{\rm\cite[(III.2.3.13)]{Hashimoto5}}.
\end{description}
\end{proposition}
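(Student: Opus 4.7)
The plan is to run the cycle $(1)\Rightarrow(2)\Rightarrow(3)\Rightarrow(4)\Rightarrow(5)\Rightarrow(1)$, treating the pair $(1)\Leftrightarrow(5)$ and the Hom-vanishing characterization $(4)\Leftrightarrow(5)$ of $C_\pi$-comodules as essentially the defining properties of the Donkin subcoalgebra, developed in \cite[(III.2.3.13)]{Hashimoto5}. What remains to be done from scratch is therefore the relationship between the global conditions on $M$ itself and the subquotient/residue-field conditions $(2)$ and $(3)$.

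For the forward implications I would argue as follows. $(1)\Rightarrow(2)$ is immediate, since the $T$-weight decomposition of a $G$-module over $R$ is compatible with $G$-subquotients (so $N_\lambda$ is a subquotient of $M_\lambda$) and with tensoring with any $R$-algebra $K$ (so $(K\otimes_R N)_\lambda=K\otimes_R N_\lambda$); hence $M_\lambda=0$ for $\lambda\in X^+\setminus\pi$ propagates to every $K\otimes_R N$. For $(2)\Rightarrow(3)$, I would use that $\Delta_G(\lambda)$ is generated as a $G$-module by a highest-weight vector $v_\lambda$ of weight $\lambda$, so any nonzero $G_K$-homomorphism $\varphi\colon\Delta_{G_K}(\lambda)\to K\otimes_R N$ must send $v_\lambda$ into $(K\otimes_R N)_\lambda$, which vanishes by $(2)$ whenever $\lambda\notin\pi$. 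For $(3)\Rightarrow(4)$, given a nonzero $f\colon\Delta_G(\lambda)\to M$ with $\lambda\notin\pi$, set $N:=f(\Delta_G(\lambda))$, an $R$-finite $G$-submodule of $M$; Nakayama's lemma supplies a maximal ideal $\fp\subset R$ with $N\not\subset\fp N$, so setting $K=R/\fp$, the reduction $K\otimes_R f\colon\Delta_{G_K}(\lambda)\to K\otimes_R N$ is still a nonzero map onto a nonzero module, contradicting $(3)$.

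The delicate step is $(4)\Rightarrow(5)$, the Hom-vanishing characterization of $C_\pi$-comodules. Over an algebraically closed field the argument is the standard maximal-weight trick: choose a finite-dimensional subcomodule containing the offending weight vector, pick $\mu$ maximal among its dominant weights that are $\geq\lambda$, and note that since the dominant $W$-conjugate of $\mu+\alpha$ strictly exceeds $\mu$ in the dominance order for any positive root $\alpha$, $U^+$ must annihilate the $\mu$-weight space, producing a nonzero $\Delta_G(\mu)\to M$ with $\mu\notin\pi$. To transfer this to a general Noetherian $R$, I would express $M$ as the filtered union of its $R$-finite $G$-subcomodules, perform the Nakayama-style descent through residue fields already used in $(3)\Rightarrow(4)$, and invoke faithfully flat descent to glue the $C_\pi$-comodule structure back to $R$; the final implication $(5)\Rightarrow(1)$ is then essentially definitional, since the weights of a $C_\pi$-comodule lie in the $W$-saturation of $\pi$ and the downward-closedness of $\pi$ forces its dominant weights into $\pi$. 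The main obstacle throughout is precisely this descent in $(4)\Rightarrow(5)$: over a field the Hom-vanishing argument is a clean application of highest-weight theory, but its translation to an arbitrary Noetherian base $R$ requires interleaving base change along residue fields with the finite-generation of subcomodules and the structural results on the Donkin subcoalgebra packaged in \cite[Chapters~I and III]{Hashimoto5}.
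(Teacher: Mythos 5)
Your cycle $(1)\Rightarrow(2)\Rightarrow(3)\Rightarrow(4)\Rightarrow(5)\Rightarrow(1)$ is the same one the paper runs, and like the paper you outsource $(4)\Rightarrow(5)$ to the structural theory of the Donkin subcoalgebra in \cite[(III.2.3)]{Hashimoto5}. The two implications you actually work out from scratch are done by genuinely different (and in each case somewhat cleaner) arguments than the paper's. For $(2)\Rightarrow(3)$ you exploit that $\Delta_{G_K}(\lambda)$ is generated by its highest weight vector $v_\lambda$, so a nonzero map must hit $(K\otimes_R N)_\lambda$; the paper instead dualizes, applies Frobenius reciprocity to get $\Hom_G(\Delta_G(\lambda),M)\hookrightarrow M_{w_0\lambda}$, and then uses $W$-invariance of weight multiplicities to conclude $M_{w_0\lambda}=0$. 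For $(3)\Rightarrow(4)$ you take $N=f(\Delta_G(\lambda))$, apply Nakayama to find $\fp$ with $N/\fp N\ne 0$, and use right-exactness of $\kappa(\fp)\otimes_R-$ together with $\kappa(\fp)\otimes_R\Delta_G(\lambda)\cong\Delta_{G_{\kappa(\fp)}}(\lambda)$ to get a nonzero map over the residue field; the paper instead runs a Noetherian induction on $R$ together with a torsion-submodule argument. Both alternatives are valid, and yours buys some economy: no induction, and the key base-change fact you need (Weyl modules are $R$-finite projective and commute with $-\otimes_R K$) is used in both proofs anyway.

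One caveat. Your parenthetical sketch of how $(4)\Rightarrow(5)$ over a general Noetherian $R$ would be deduced from the field case does not work as stated: residue fields $R/\fp$ are not faithfully flat over $R$, so there is no descent of a $C_\pi$-comodule structure from residue fields back to $R$; and the forward direction is also obstructed, since $\Hom_G(\Delta_G(\lambda),M)=0$ over $R$ does not entail $\Hom_{G_K}(\Delta_{G_K}(\lambda),K\otimes_R M)=0$ for a residue field $K$ --- taking $G$-invariants is only left exact and does not commute with $-\otimes_R K$. This does not damage your proof, since you (like the paper, which cites \cite[(III.2.3.5)]{Hashimoto5}) are ultimately deferring the implication to the reference; but the mechanism you gesture at is not the one the reference uses, and the cavalier phrasing ``$(5)\Rightarrow(1)$ is essentially definitional'' also hides real content: the paper reduces to showing $C_\pi$ itself belongs to $\pi$ and then inducts on $|\pi|$ using the Donkin system.
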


\begin{proof}
{\bf (1)$\Rightarrow$(2)} is obvious.

{\bf (2)$\Rightarrow$(3)} We may assume that $R=K$ and $N=M$.
Then
\begin{multline*}
\Hom_G(\Delta_G(\lambda),M)\cong\Hom_G(M^*,\ind_B^G(-w_0\lambda))\\
\cong\Hom_B(M^*,-w_0\lambda)\cong\Hom_B(w_0\lambda,M)\subset M_{w_0\lambda}=0.
\end{multline*}

{\bf (3)$\Rightarrow$(4)}
As $M$ is the inductive limit of $R$-finite $G$-submodules of $M$, 
we may assume that $M$ is $R$-finite.
We use the Noetherian induction, and we may assume that the implication
is true for $R/I$ for any nonzero ideal $I$ of $R$.
If $R$ is not a domain, then there is a nonzero ideal $I$ of $R$ such that
the annihilator $0:I$ of $R$ is also nonzero.
As $\Hom_G(\Delta_G(\lambda),M/IM)=0$ and $\Hom_G(\Delta_G(\lambda),IM)=0$,
we have that $\Hom_G(\Delta(\lambda),M)=0$.
So we may assume that $R$ is a domain.
Let $N$ be the torsion part of $M$.
Note that 
\[
0\rightarrow N\rightarrow M\rightarrow K\otimes_R M
\]
is exact, where $K$ is the field of fractions of $R$.
Hence $N$ is a $G$-submodule of $M$.
The annihilator of $N$ is nontrivial, and hence $\Hom_G(\Delta_G(\lambda
),N)=0$.
On the other hand, by assumption, 
$\Hom_{G}(\Delta_G(\lambda),K\otimes_R M)=0$.
So $\Hom_G(\Delta_G(\lambda),M)=0$, and we are done.

{\bf (4)$\Rightarrow$(5)} is \cite[(III.2.3.5)]{Hashimoto5}.

{\bf (5)$\Rightarrow$(1)} As the coaction $\omega_M:M\rightarrow M'\otimes_R
C_\pi$ is injective, it suffices to show that $M'\otimes_R C_\pi$ belongs to
$\pi$, where $M'$ is the $R$-module $M$ with the trivial $G$-action.
For this, it suffices to show that $C_\pi$ belongs to $\pi$.
This is proved easily by induction on 
the number of elements of $\pi$, if $\pi$ is finite, almost by the 
definition of the Donkin system \cite[(III.2.2)]{Hashimoto5}, and
the fact that $\nabla_G(\lambda)$ belongs to $\pi$.
Then the general case follows easily from the definition of $C_\pi$, 
see \cite[(III.2.3.13)]{Hashimoto5}.
\end{proof}

\begin{corollary}
Let $M$ be a $G$-module, and $\pi$ a poset ideal of the
set of dominant weights $X^+$.
If $M$ belongs to $\pi$,
then $\Ext^i_G(\Delta_G(\lambda),M)=0$ for $i\geq 0$ and $\lambda\in X^+
\setminus \pi$.
\end{corollary}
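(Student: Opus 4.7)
The plan is to use the characterization $M \in \text{Comod-}C_\pi$ from equivalence (5) of Proposition~\ref{belonging.thm}, embed $M$ into a good $C_\pi$-comodule, and then dimension-shift. First I would invoke the coaction $\omega_M\colon M\hookrightarrow M'\otimes_R C_\pi$ (with $M'$ denoting $M$ as an $R$-module carrying the trivial $G$-action), which is an injective $G$-map, and the cokernel $Q$ remains a $C_\pi$-comodule since that category is closed under quotients. Recall from \cite[(III.2.3.13)]{Hashimoto5} that $C_\pi$ has a good filtration whose layers are dual Weyl modules $\nabla_G(\mu)$ with $\mu\in\pi$, so $M'\otimes_R C_\pi$ is good whenever $M'$ is $R$-flat.

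Next I would apply the long exact sequence for $\Ext^*_G(\Delta_G(\lambda),-)$ to $0\to M\to M'\otimes_R C_\pi\to Q\to 0$. All three terms are $C_\pi$-comodules, so Proposition~\ref{belonging.thm}(4) kills every $\Hom_G(\Delta_G(\lambda),-)$ for $\lambda\in X^+\setminus\pi$. Invoking Kempf--Donkin vanishing $\Ext^i_G(\Delta_G(\lambda),\nabla_G(\mu))=0$ for $i\geq 1$ (which propagates by induction along a good filtration to $\Ext^i_G(\Delta_G(\lambda),N)=0$ for any good $N$ and every $\lambda$), the middle term contributes nothing, yielding $\Ext^1_G(\Delta_G(\lambda),M)=0$ together with isomorphisms $\Ext^i_G(\Delta_G(\lambda),M)\cong \Ext^{i-1}_G(\Delta_G(\lambda),Q)$ for $i\geq 2$. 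Iterating with $Q$ in place of $M$ and inducting on $i$ finishes the argument.

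The main obstacle is the flatness hypothesis on $M'$: in the relative setting of (\ref{reductive-ring.par}), $M$ need not be $R$-flat. One can bypass this either by taking an $R$-flat resolution of $M'$ and spreading the argument over the resulting double complex, or by working directly in the Grothendieck category of $C_\pi$-comodules: it has enough injectives, and every such injective is good as a $G$-module (being a summand of an injective fabricated from copies of $C_\pi$), so an injective resolution of $M$ inside this subcategory is automatically a $\Hom_G(\Delta_G(\lambda),-)$-acyclic resolution to which Proposition~\ref{belonging.thm}(4) applies term by term.
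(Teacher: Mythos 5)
Your proof takes essentially the same route as the paper's: embed $M$ into $M'\otimes_R C_\pi$ via the coaction $\omega_M$, observe that the cokernel is again a $C_\pi$-comodule (hence belongs to $\pi$), and dimension-shift down the long exact sequence for $\Ext^*_G(\Delta_G(\lambda),-)$. The one place you diverge is the ``main obstacle'' you raise at the end, and it is a red herring. The paper does not need $M'$ to be $R$-flat: it concludes that $M'\otimes_R C_\pi$ is good by appealing to the fact that the Donkin subcoalgebra $C_\pi$ is both good and $R$-\emph{finite} $R$-\emph{projective} by construction, and good $\otimes$ arbitrary is good as soon as the good factor is $R$-finite $R$-projective (this is what \cite[(III.4.1.8)]{Hashimoto5} provides; compare also the remark ``If $M$ and $N$ are good and $M$ is $R$-finite $R$-projective, then $M\otimes N$ is good'' in the surrounding text). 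Projectivity of $C_\pi$, not flatness of $M'$, is the hypothesis doing the work. Both of your proposed workarounds --- a flat resolution of $M'$, or passing to injectives in the category of $C_\pi$-comodules --- would succeed, but they import machinery that the existing lemma already renders unnecessary. The rest of your outline, including the observation that goodness of the middle term forces the vanishing of all higher $\Ext$ against Weyl modules (so that the connecting map is an isomorphism in degrees $\ge 2$), is correct and matches the paper's intent.
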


\begin{proof}
We use the induction on $i$.
The case $i=0$ is already proved in Proposition~\ref{belonging.thm}.

Let $i>0$.
Let $C_\pi$ denote the Donkin subcoalgebra of $k[G]$.
Consider the exact sequence
\[
0\rightarrow M\xrightarrow{\omega_M} 
M'\otimes_R C_\pi\rightarrow N\rightarrow 0.
\]
Then $N$ belongs to $\pi$, and $\Ext^{i-1}_G(\Delta_G(\lambda),N)=0$
by induction assumption.
On the other hand, as $C_\pi$ is good and $R$-finite $R$-projective 
by construction, $M'\otimes_R C_\pi$ is also good by
\cite[(III.4.1.8)]{Hashimoto5}.
Hence $\Ext_G^i(\Delta_G(\lambda),M'\otimes_R C_\pi)=0$.
By the long exact sequence of the $\Ext$-modules, we have that
$\Ext^i_G(\Delta_G(\lambda),M)=0$.
\end{proof}

\begin{lemma}\label{finite-open.thm}
Let the notation be as in {\rm(\ref{reductive-scheme.par})}.
Let $M$ be a coherent $(G,\O_X)$-module.
Then $\Good(M)$ is Zariski open in $X$.
\end{lemma}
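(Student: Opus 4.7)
The assertion is Zariski-local on $X$. Goodness is preserved by flat pullback and descends along faithfully flat morphisms, and a faithfully flat \'etale map is open, so after passing to an affine open and then to an \'etale cover we may assume $X=\Spec R$ is affine Noetherian and $G_R$ is split reductive. Write $M=\tilde N$ for a finitely generated $(G_R,R)$-module $N$.

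Pick a finite $R$-generating set of $N$. The coaction $\omega_N:N\to N\otimes_R R[G_R]$ maps each generator to a finite sum whose second factors are elements of $R[G_R]$. Each such element belongs to some Donkin subcoalgebra $C_{\pi'}$ with $\pi'\subset X^+$ finite; taking the union over the finitely many elements involved yields a single finite poset ideal $\pi$ with $\omega_N(N)\subset N\otimes_R C_\pi$. By Proposition~\ref{belonging.thm}, $N$ belongs to $\pi$, and by its corollary $\Ext^i_{G_R}(\Delta_{G_R}(\mu),N)=0$ for every $i\geq 0$ and every $\mu\in X^+\setminus\pi$; the same vanishing holds for each pullback $N_x$, since the $C_\pi$-comodule structure pulls back. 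Hence goodness of $N_x$ reduces to the vanishing of $\Ext^1_{G_R}(\Delta_{G_R}(\lambda),N)_x$ for $\lambda$ in the \emph{finite} set $\pi$.

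The last ingredient is coherence and flat base change for these Ext groups. Because $C_\pi$ is $R$-finite projective, the category of $C_\pi$-comodules is equivalent to the category of modules over $A:=C_\pi^*$, an $R$-algebra that is module-finite, hence Noetherian. Since $\pi$ is a poset ideal, this subcategory of $G_R$-modules is closed under extensions, and Yoneda interpretation identifies
\[
E_\lambda:=\Ext^1_{G_R}(\Delta_{G_R}(\lambda),N)\;\cong\;\Ext^1_A(\Delta_{G_R}(\lambda),N).
\]
Both $N$ and $\Delta_{G_R}(\lambda)$ are $R$-finite, hence $A$-finite; a finite-type free resolution of $\Delta_{G_R}(\lambda)$ over the Noetherian ring $A$ then realizes $E_\lambda$ as the cohomology of a complex of finitely generated $R$-modules, exhibiting $E_\lambda$ as a coherent $R$-module whose formation commutes with flat base change $R\to\O_{X,x}$.

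Combining, $\Good(M)\cap\Spec R=\bigcap_{\lambda\in\pi}(\Spec R\setminus\Supp E_\lambda)$ is a finite intersection of Zariski opens, and hence open. The main technical hurdle is the production of the finite poset ideal $\pi$ to which $N$ belongs globally; the subsequent identification $\Ext^1_{G_R}\cong\Ext^1_A$ with $A$ module-finite over $R$ is the crucial reduction that makes both coherence and flat base change automatic.
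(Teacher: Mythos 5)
Your proof is correct, and its overall skeleton (reduce to an affine base with split $G$, find a finite poset ideal $\pi$ to which the $R$-finite module $N$ belongs, reduce to vanishing of finitely many $\Ext^1$ groups, and pass to supports) coincides with the paper's. Where you diverge is in how the two key finiteness facts are obtained. The paper simply cites its own book for both: that $\Good(N)$ is the complement of $\Supp\Ext^1_G(L,N)$ with $L=\bigoplus_{\lambda\in\pi}\Delta_G(\lambda)$ via \cite[(III.2.3.8)]{Hashimoto5}, and that this Ext module is $R$-finite via \cite[(III.2.3.19)]{Hashimoto5}. You instead give a self-contained argument: since $\pi$ is a poset ideal, the category of $C_\pi$-comodules is a full subcategory of $G$-modules closed under subquotients and, by the $\Hom$-vanishing criterion of Proposition~\ref{belonging.thm}, also under extensions, so Yoneda gives $\Ext^1_G(\Delta_G(\lambda),N)\cong\Ext^1_{C_\pi\text{-comod}}(\Delta_G(\lambda),N)$ for $\lambda\in\pi$; and since $C_\pi$ is $R$-finite projective, $C_\pi$-comodules are the same as modules over the $R$-finite (hence Noetherian) algebra $A=C_\pi^*$, so the Ext group is computed from a resolution of $\Delta_G(\lambda)$ by finitely generated free $A$-modules, whence $R$-coherence and commutation with flat base change (in particular with $R\to\O_{X,x}$) are automatic. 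The paper's route is shorter given the infrastructure of \cite{Hashimoto5}; yours buys transparency and independence from those references, at the cost of spelling out the equivalence with $A$-modules and the extension-closure argument. Both are valid; your observation that $\Ext^1_G$ for objects belonging to $\pi$ can be computed in the module category over the dual algebra of the Donkin subcoalgebra is exactly the mechanism that makes the cited finiteness and base-change statements true.
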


\begin{proof}
Let $f:\Spec R\rightarrow X$ be a faithfully flat morphism of 
finite type such that $G_R$ is split reductive.
Let $M_R:=\Gamma(\Spec R,f^*M)$.
Then $\Good(M_R)=f^{-1}(\Good(M))$.
As $f$ is a surjective open map, it suffices to show that $\Good(M_R)$
is open in $\Spec R$.
So we may assume that $S=X=\Spec R$ is affine and $G$ is split,
and we are to prove that $\Good(N)$ is open for an $R$-finite
$G$-module $N$.

As $N$ is $R$-finite, there exists some finite poset ideal $\pi$ of $X^+$
to which $N$ belongs.
Then $\Ext_G^i(\Delta_G(\lambda),N)=0$ for 
$\lambda\in X^+\setminus \pi$ and $i\geq 0$.
Set $L:=\bigoplus_{\lambda\in\pi}\Delta_G(\lambda)$.
Then $\Good(N)$ is nothing but the complement of the
support of the $R$-module
$\Ext^1_G(L,N)$ by \cite[(III.2.3.8)]{Hashimoto5}.
As $\Ext^1_G(L,N)$ is $R$-finite by \cite[(III.2.3.19)]{Hashimoto5},
the support of $\Ext^1_G(L,N)$ is closed, and we are done.
\end{proof}

\paragraph
Let the notation be as in (\ref{reductive-scheme.par}).
For a quasi-coherent $(G,\O_X)$-module $M$, the good dimension $\GD(M)$ is
defined to be $-\infty$ if $M=0$.
If $M\neq 0$ and there is an exact sequence
\begin{equation}\label{good-sequence.eq}
0\rightarrow M\rightarrow N_0\rightarrow \cdots\rightarrow N_s\rightarrow 0
\end{equation}
such that each $N_i$ is good,
then $\GD(M)$ is defined to be the smallest $s$ such that such an
exact sequence exists.
If there is no such an exact sequence, $\GD(M)$ is defined to be $\infty$.

\paragraph
Assume that $X=\Spec R$ is affine and $G$ is split reductive.
For a $G$-module $M$,
\[
\GD(M)=\sup\{i\mid \bigoplus_{\lambda\in X^+}\Ext^i_G(\Delta_G(\lambda),M)
\neq 0\}.
\]

Note that $M$ is good if and only if $\GD(M)\leq 0$.
If $r\geq 0$, $s\geq -1$, and
\[
0\rightarrow M\rightarrow M_s\rightarrow\cdots M_0\rightarrow N\rightarrow 0
\]
is an exact sequence of $G$-modules with $\GD(M_i)\leq i+r$, then
$\GD(M)\leq s+r+1$ if and only if $\GD(N)\leq r$.

If $M$ and $N$ are good and $M$ is $R$-finite $R$-projective, 
then $M\otimes N$ is good, see \cite[(III.4.5.10)]{Hashimoto5}.
Moreover, if $M$ is $R$-finite $R$-projective with $\GD(M)\leq s$,
then $M$ has an exact sequence of the form (\ref{good-sequence.eq})
such that each $N_i$ is $R$-finite $R$-projective and good.
Indeed, $M$ belongs to some finite poset ideal $\pi$ of $X^+$,
and when we truncate the cobar resolution of $M$ as a $C_\pi$-comodule,
then we obtain such a sequence.

It follows that for an $R$-finite $R$-projective $G$-module $M$,
$\GD(M)\leq s$ if and only if $\GD(\kappa(\fm)\otimes_R M)\leq s$
for any maximal ideal $\fm$ of $R$ by \cite[(III.4.1.8)]{Hashimoto5}.

It also follows that 
if $\GD(M)\leq s$ and $\GD(N)\leq t$ with $M$ being $R$-finite
$R$-projective, then $\GD(M\otimes N)\leq s+t$.

\begin{lemma}\label{sym-finite.thm}
Let $V$ be an $R$-finite $R$-projective 
$G$-module with $\rank V\leq n<\infty$.
Then the following are equivalent.
\begin{description}
\item[(1)] $\Sym V$ is good.
\item[(2)] $\bigoplus_{i=1}^{n-1}\Sym_iV$ is good.
\item[(3)] For $i=1,\ldots, n-1$, $\GD(\ext^iV)\leq i-1$.
\item[(4)] For $i\geq 1$, $\GD(\ext^iV)\leq i-1$.
\end{description}
\end{lemma}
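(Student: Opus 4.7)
The plan is to exploit the Koszul complex associated to the identity map $V\to V$. Since $V$ is $R$-finite $R$-projective, for each $j\geq 1$ the degree-$j$ graded piece of the Koszul resolution of $R$ over $\Sym V$ is an exact sequence of $G$-modules
\[
0\to\ext^n V\otimes\Sym_{j-n}V\to\cdots\to\ext^1 V\otimes\Sym_{j-1}V\to\Sym_j V\to 0,
\]
with the conventions $\Sym_k V=0$ for $k<0$ and $\ext^k V=0$ for $k>\rank V$. All implications will follow by combining this complex with the ``if and only if'' bound on good dimension for an exact sequence $0\to M\to M_s\to\cdots\to M_0\to N\to 0$ stated just before the lemma, together with the tensor-stability of good dimension when one factor is $R$-finite $R$-projective.

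The implications (1)$\Rightarrow$(2) and (4)$\Rightarrow$(3) are immediate. For (2)$\Leftrightarrow$(3), I would induct on $i$ with $1\leq i\leq n-1$ using the Koszul sequence at $j=i$: the interior terms $\ext^{k+1}V\otimes\Sym_{i-k-1}V$ have good dimension $\leq k$ by the induction hypothesis (applied to $\ext^{k+1}V$) together with goodness of $\Sym_{i-k-1}V$. The iff-formula then yields the equivalence $\GD(\ext^i V)\leq i-1\Leftrightarrow\GD(\Sym_i V)\leq 0$, giving both directions of (2)$\Leftrightarrow$(3) by simultaneous induction.

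For (3)$\Rightarrow$(4), the only additional content is $\GD(\ext^n V)\leq n-1$. If $\rank V<n$ then $\ext^n V=0$; otherwise $\rank V=n$ and $\ext^n V$ is an $R$-finite rank-$1$ $R$-projective $G$-module. The $G$-action on it factors through a character $\chi\in X(G)$ of the whole of $G$ (automatically dominant, since a character of $G$ is trivial on all simple coroots); writing $\ext^n V\cong\nabla_G(\chi)\otimes_R P$ for a rank-$1$ $R$-projective $P$ with trivial $G$-action, flatness of $P$ and goodness of $\nabla_G(\chi)\cong R_\chi$ yield goodness of $\ext^n V$, hence $\GD(\ext^n V)\leq 0\leq n-1$.

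Finally, for (2)$\Rightarrow$(1), the preceding steps give $\GD(\ext^i V)\leq i-1$ for every $i\geq 1$. I would then induct on $j$: the case $j<n$ is (2), and for $j\geq n$ the leftmost Koszul term $\ext^n V\otimes\Sym_{j-n}V$ is the tensor of two good $R$-finite $R$-projective modules (using the inductive hypothesis that $\Sym_{j-n}V$ is good), hence is itself good, while the interior terms have the required $\GD$-bounds as before; the iff-formula then forces $\GD(\Sym_j V)\leq 0$. The main obstacle I expect is establishing the goodness of $\ext^n V$ in the rank-$n$ case via the character argument; this is the step that closes the loop between (3) and (4) and makes (2) imply (1).
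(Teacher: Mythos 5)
Your proof is correct and follows the same basic mechanism as the paper's: the degree-$j$ strand of the Koszul complex together with the iff-bound on $\GD$ for long exact sequences and the tensor-stability of $\GD$ when one factor is $R$-finite $R$-projective. The one structural difference is the handling of (3)$\Rightarrow$(4). The paper opens with ``we may assume $R$ is a field,'' justified by the remark immediately preceding the lemma (that $\GD(M)\le s$ for $R$-finite $R$-projective $M$ can be checked on closed fibres, via \cite[(III.4.1.8)]{Hashimoto5}); once over a field, $\dim\ext^i V\le1$ for $i\ge n$ and the step is a one-liner. You instead stay over general $R$ and argue directly that a rank-one $R$-projective $G$-module is $\nabla_G(\chi)\otimes_R P$ for a character $\chi\in X(G)$ (automatically dominant) and a trivial rank-one projective $P$, hence good. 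Both are valid; the field reduction is a little cleaner and avoids having to split $\Spec R$ into clopen pieces where $\rank V$ is constant (a point you gloss over with ``if $\rank V<n\ldots$ otherwise $\rank V=n$''), but your argument buys nothing extra here since the fibrewise criterion is already in place. Finally, you establish both directions of (2)$\Leftrightarrow$(3), whereas the cyclic chain (1)$\Rightarrow$(2)$\Rightarrow$(3)$\Rightarrow$(4)$\Rightarrow$(1) only needs (2)$\Rightarrow$(3); harmless, just slightly more than necessary.
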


\begin{proof}
We may assume that $R$ is a field.

{\bf (1)$\Rightarrow$(2)} is trivial.

{\bf (2)$\Rightarrow$(3)} We use the induction on $i$.

By assumption and the
induction assumption, $\GD(\Sym_{i-j}V\otimes \ext^j V)\leq j-1$ 
for $j=1,\ldots,i-1$.
On the other hand, $\Sym_i V$ is good.
So by the exact sequence
\begin{equation}\label{Koszul.eq}
0\rightarrow \ext^iV\rightarrow \Sym_1V\otimes \ext^{i-1}V
\rightarrow\cdots\rightarrow \Sym_{i-1}V\otimes \ext^1V
\rightarrow \Sym_iV\rightarrow 0,
\end{equation}
$\GD(\ext^iV)\leq i-1$.

{\bf(3)$\Rightarrow$(4)} is trivial,
as $\dim \ext^iV\leq 1$ for $i\geq n$.

{\bf (4)$\Rightarrow$(1)}
Note that $\Sym_0V=R$ is good.
Now use induction on $i\geq 1$ to prove that $\Sym_iV$ is
good (use the exact sequence (\ref{Koszul.eq}) again).
\end{proof}

\begin{theorem}\label{good-open.thm}
Let $S$ be a scheme, $G$ a reductive $S$-group acting trivially on
a Noetherian $S$-scheme $X$.
Let $M$ be a locally free coherent $(G,\O_X)$-module.
Then
\begin{multline}\label{good-eq.eq}
\Good(\Sym M)=
\{x\in X\mid \Sym (\kappa(x)\otimes_{\Cal O_{X,x}} M_x) \\
\text{ is a good $(\Spec \kappa(x)\times_S G)$-module}\},
\end{multline}
and $\Good(\Sym M)$ is Zariski open in $X$.
\end{theorem}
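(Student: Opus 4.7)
The plan is to reduce the non-coherent algebra $\Sym M$ to a coherent truncation whose good locus coincides with that of $\Sym M$, after which Lemma~\ref{finite-open.thm} delivers the openness and Lemma~\ref{sym-finite.thm} delivers the pointwise description. Since the ``good'' predicate and $\Good$ commute with faithfully flat base change by the remarks in (\ref{reductive-scheme.par}), and openness is Zariski-local on $X$, I would first pass to an affine faithfully flat cover $\Spec R\to X$ over which $G$ becomes split reductive, then shrink $\Spec R$ further so that $M$ becomes free of some fixed rank $n$. This reduces the theorem to the case of a free $G$-module of rank $n$ over a Noetherian ring.

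The key observation for openness is the equivalence \textbf{(1)$\Leftrightarrow$(2)} of Lemma~\ref{sym-finite.thm}: for an $R$-finite $R$-projective $G$-module $V$ of rank $\leq n$, goodness of $\Sym V$ is equivalent to goodness of the finite direct sum $\bigoplus_{i=1}^{n-1}\Sym_i V$. Applying this at each stalk $\O_{X,x}$ yields
\[
\Good(\Sym M)=\Good\bigl(\textstyle\bigoplus_{i=1}^{n-1}\Sym_i M\bigr),
\]
and since $\bigoplus_{i=1}^{n-1}\Sym_i M$ is a coherent $(G,\O_X)$-module, Lemma~\ref{finite-open.thm} immediately gives that its good locus is Zariski open.

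For the pointwise description, I would apply Lemma~\ref{sym-finite.thm} in its \textbf{(1)$\Leftrightarrow$(4)} form both over the local ring $\O_{X,x}$ and over its residue field $\kappa(x)$: goodness of $\Sym M_x$ then amounts to $\GD(\ext^i M_x)\leq i-1$ for every $i\geq 1$, and goodness of $\Sym(\kappa(x)\otimes_{\O_{X,x}} M_x)$ to $\GD(\ext^i(\kappa(x)\otimes M_x))\leq i-1$. Because $\ext^i M_x$ is a finite projective $\O_{X,x}$-module and exterior powers commute with base change for such modules, the residue-field criterion for good dimension recorded just before Lemma~\ref{sym-finite.thm} identifies these two families of conditions, proving the claimed equality.

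The main technical obstacle I anticipate is bookkeeping for the initial reduction: because $\Sym M$ is not coherent, one has to verify that stalks, residue-field fibres, exterior powers, symmetric powers, and the goodness predicate all behave compatibly under the faithfully flat base change used to split $G$ and to trivialize $M$. Once these compatibilities are handled within the framework of (\ref{reductive-scheme.par}), the substitution of the coherent truncation $\bigoplus_{i=1}^{n-1}\Sym_i M$ for $\Sym M$ is the decisive step, after which Lemma~\ref{finite-open.thm} and the residue-field criterion for $\GD$ finish everything.
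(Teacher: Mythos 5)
Your proof follows essentially the same route as the paper's: reduce by faithfully flat base change to the split affine case, replace $\Sym M$ by the coherent truncation $\bigoplus_{i}\Sym_i M$ via Lemma~\ref{sym-finite.thm}, conclude openness from Lemma~\ref{finite-open.thm}, and settle the pointwise equality with the residue-field criterion for goodness of finite projective modules. The only difference is cosmetic: for the pointwise step the paper invokes Lemma~\ref{sym-finite.thm} in its $\textbf{(1)}\Leftrightarrow\textbf{(2)}$ form and applies the residue-field criterion directly to $(\bigoplus\Sym_i N)_P$, while you route through $\textbf{(1)}\Leftrightarrow\textbf{(4)}$ and the $\GD(\ext^i)$ conditions; both pass through the same residue-field criterion and are interchangeable.
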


\begin{proof}
Take a faithfully flat morphism of finite type
$f:\Spec R\rightarrow X$ such that $\Spec R\times_S G$
is split reductive.
Note that $f$ is a surjective open map, and
$f^{-1}(\Good(\Sym M))=
\Good(\Sym f^*M)$.

First we prove that $\Good(\Sym M)$ is open.
We may assume that $S=X=\Spec R$ is affine, and
$G$ is split reductive.

Then by Lemma~\ref{sym-finite.thm} and Lemma~\ref{finite-open.thm},
$$\Good(\Sym M)=\Good(\bigoplus_{i=1}^n \Sym_i M)$$
is open, where the rank of $M$ is less than or equal to $n$.

Next we prove that the equality (\ref{good-eq.eq}) holds.
Let $P\in\Spec R$, and $x=f(P)$.
Then $\Sym(\kappa(x)\otimes_{\O_{X,x}}M_x)$ is good if and only if
$\Sym(\kappa(P)\otimes_{R_P}\Gamma(\Spec R,f^*M)_P)$ is good.
So we may assume that $S=X=\Spec R$ is affine, and $G$ is split reductive.
Let $N$ be an $R$-finite $R$-projective $G$-module of rank at most $n$.
Then $(\Sym N)_P$ is good if and only if $(\bigoplus_{i=1}^n\Sym_i N)_P$ 
is good by Lemma~\ref{sym-finite.thm}.
By \cite[(III.4.1.8)]{Hashimoto5}, 
$(\bigoplus_{i=1}^n\Sym_i N)_P$ is good if and only if
$\kappa(P)\otimes_{R_P}(\bigoplus_{i=1}^n\Sym_iN)_P$ is good.
By Lemma~\ref{sym-finite.thm} again,
it is good if and only if 
$\kappa(P)\otimes_{R_P}(\Sym N)_P$ is so.
Thus the equality (\ref{good-eq.eq}) was proved.
\end{proof}

\begin{corollary}
Let $R$ be a Noetherian domain of characteristic zero,
and $G$ a reductive group over $R$.
If $M$ is an $R$-finite $R$-projective $G$-module,
then $\{P\in\Spec R\mid \Sym(\kappa(P)\otimes_R M) \text{ is good}\}$ 
is a dense open subset of $\Spec R$.
\end{corollary}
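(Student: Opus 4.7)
The plan is to read off both statements directly from Theorem~\ref{good-open.thm}: openness is the content of that theorem applied in the simplest possible setup, and density amounts to checking that the generic point lies in the good locus, which is immediate in characteristic zero from the linear reductivity of reductive groups.

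Concretely, I would first take $X = \Spec R$ with the trivial $G$-action and let $\tilde M$ be the locally free coherent $(G,\O_X)$-module associated with $M$ (this is well-defined because an $R$-finite $R$-projective module over a Noetherian ring is locally free). Theorem~\ref{good-open.thm} then gives
\[
\{P\in\Spec R\mid \Sym(\kappa(P)\otimes_R M)\text{ is good}\}=\Good(\Sym\tilde M),
\]
and says this set is Zariski open. Since $R$ is a domain, $\Spec R$ is irreducible, so any nonempty open set is automatically dense; it is therefore enough to exhibit one point of the good locus, and I would use the generic point $\eta$ with residue field $K=\mathrm{Frac}(R)$. The fiber at $\eta$ is $\Sym(K\otimes_R M)$, viewed as a module over the reductive $K$-group $G_K=\Spec K\times_R G$. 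Because $K$ has characteristic zero, $G_K$ is linearly reductive, so every $G_K$-module is semisimple, whence $\Ext^1_{G_K}(\Delta_{G_K}(\lambda),N)=0$ for every $G_K$-module $N$ and every $\lambda\in X^+$. In particular $\Sym(K\otimes_R M)$ is a good $G_K$-module, so $\eta\in\Good(\Sym\tilde M)$, and the corollary follows.

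The only point that requires a moment of thought is the compatibility between the definition of ``good'' used in Section~7 (which passes to a faithfully flat cover $\Spec R'\to\Spec K$ that splits $G_K$) and the classical semisimplicity argument over a characteristic zero field. This is handled by the remarks in (\ref{reductive-scheme.par}): goodness descends along faithfully flat maps, and $\Ext^1_{G_{R'}}(\Delta_{G_{R'}}(\lambda),R'\otimes_K N)\cong R'\otimes_K \Ext^1_{G_K}(\Delta_{G_K}(\lambda),N)$ by flat base change. Once this compatibility is noted, the argument above is complete and there is no further obstacle.
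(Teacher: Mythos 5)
Your argument is correct and matches the paper's proof almost exactly: openness is read off from Theorem~\ref{good-open.thm}, and density reduces to observing that the generic point lies in the good locus because $\kappa(\eta)$ has characteristic zero, so every $\kappa(\eta)\otimes_R G$-module is good. The extra remarks you make about faithfully flat descent and base change of $\Ext$ are just an unwinding of the definitions already recorded in (\ref{reductive-scheme.par}) and do not change the substance.
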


\begin{proof}
By Theorem~\ref{good-open.thm},
it suffices to show that $\Good(\Sym M)$ is non-empty.
But the generic point $\eta$ of $\Spec R$ is in
$\Good(\Sym M)$.
Indeed, $\kappa(\eta)$ is a field of characteristic
zero, and any $\kappa(\eta)\otimes_R G$-module is good.
\end{proof}

\end{document}